\newtheorem{theorem}{Theorem}[section]
\newtheorem{lemma}[theorem]{Lemma}
\newtheorem{proposition}[theorem]{Proposition}
\newtheorem{remark}[theorem]{Remark}
\DeclareMathOperator{\curl}{\mathbf{curl}}
\DeclareMathOperator{\Ddiv}{\mathrm{div}}
\def\calE{\boldsymbol{\mathcal{E}}}
\def\calH{\boldsymbol{\mathcal{H}}}
\def\calG{\boldsymbol{\mathcal{G}}}
\def\bfE{\mathbf{E}}
\def\bfp{\mathbf{p}}
\def\bfH{\mathbf{H}}
\def\bfL{\mathbf{L}}
\def\bf0{\boldsymbol{0}}
\def\bfG{\mathbf{G}}
\def\bfg{\mathbf{g}}
\def\bfnu{\boldsymbol{\nu}}
\def\bfxi{\boldsymbol{\xi}}
\def\bfx{\mathbf{x}}
\def\bfv{\mathbf{v}}
\def\bfw{\mathbf{w}}
\def\bfu{\mathbf{u}}
\def\bfp{\boldsymbol{p}}
\newcommand{\Mop}{\boldsymbol{\mathcal{M}}}
\renewcommand{\Re}{\mathrm{Re}\,}
\renewcommand{\Im}{\mathrm{Im}\,}
\newcommand{\R}{\mathbb{R}}
\newcommand{\C}{\mathbb{C}}
\newcommand{\XX}{\mathbf{X}}
\renewcommand*{\i}{\mathrm{i}}
\newcommand*{\ol}[1]{\overline{#1}}
\newcommand{\GammaI}{\Gamma_{\mathrm{I}}}
\newcommand{\GammaM}{\Gamma_{\mathrm{M}}}
\newcommand{\OmegaI}{\Omega_{\mathrm{I}}}
\newcommand{\OmegaM}{\Omega_{\mathrm{M}}}
\newcommand{\e}{\mathbf{e}}
\newcommand{\f}{\mathbf{f}}
\newcommand{\g}{\mathbf{g}}
\newcommand{\p}{\boldsymbol{p}}
\newcommand{\w}{\mathbf{w}}
\newcommand{\x}{\mathbf{x}}
\newcommand{\y}{\mathbf{y}}
\newcommand{\z}{\mathbf{z}}
\def\dcalE{\left({\calE}^+-{\calE}^-\right)}
\def\dcalH{\left({\calH}^+-{\calH}^-\right)}
\def\bfFe{\boldsymbol{F}_{\calE}}
\def\bfFh{\boldsymbol{F}_{\calH}}
\def\ddcalE{[[{\calE}]]}
\def\ddcalH{[[{\calH}]]}
\newcommand{\q}{\mathbf{q}}
\begin{document}

\title{The Time Domain Linear Sampling Method for determining the shape of multiple scatterers using electromagnetic waves}

\author{Timo L\"ahivaara$^a$, Peter Monk$^b$, Virginia Selgas$^{c,*}$\bigskip\\
  ${}^a$Department of Applied Physics, University of
  Eastern Finland,\\ 70211 Kuopio, Finland\\
  ${}^b$Department of Mathematical Sciences,
  University of Delaware,\\ Newark, DE 19716, USA\\
  ${}^c$Departamento de Matem\'aticas, Universidad de
  Oviedo,\\ EPIG, C/ Luis Ortiz Berrocal s/n, 33203 Gij\'on, Spain\\
${}^*$Corresponding author}

\maketitle
\noindent
\textit{Dedicated to the memory of our dearest colleague and friend Francisco Javier
  Sayas. Your passion for retarded layer potentials inspired our
  work.}

\section*{Abstract}

The time domain linear sampling method (TD-LSM) solves inverse scattering problems using time domain data by creating an indicator function for the support of the unknown scatterer. It involves only solving a
linear integral equation called the near-field equation using different data from sampling points that probe the domain where the scatterer is located.  To date,
the method has been used for the acoustic wave equation and has been tested for several different types of scatterers, i.e. sound hard, impedance, and
penetrable, and for waveguides.  In this paper, we extend the TD-LSM to the time dependent Maxwell's system with impedance boundary conditions - a
similar analysis handles the case of a perfect electric conductor (PEC).  We provide an analysis that supports the use of the TD-LSM for this problem, and preliminary numerical tests of the algorithm.  Our analysis
relies on the Laplace transform approach previously used for the acoustic wave equation. This is the first application of the TD-LSM in electromagnetism.\bigskip

\noindent
\textbf{keywords}: Wave-based imaging, electromagnetism, impedance, linear sampling, time domain

\section{Introduction}\label{sec:intro}
The inverse scattering problem studied in this paper concerns the 
reconstruction of the shape of a bounded scatterer using time domain electromagnetic scattering data. In particular, we probe the scatterer using incident fields due to point sources located away from the scatterer, and the data for the inverse problem is the
scattered field measured on a surface containing the unknown scatterer which could have multiple components.  Reconstructing the shape of the scatterer from this data is a non-linear and ill-posed problem.

In the frequency domain, there are many possible techniques to solve this problem.  For example, optimization based schemes determine the unknown shape by finding the best fit of the data using a suitable parametrization of the unknown surface.  Obviously such a method requires a priori knowledge of the nature of the scatterer, including the number of scatterers
and their topology.  This is a very flexible technique able to handle many different measurement data (multistatic, bistatic etc), although generally computationally intensive.  For theoretical progress and results for electromagnetic inverse problems using real measured data, see e.g. \cite{XChen18,belk01,belk04,lit09,dorn10,dido19}.  

In order to decrease the need for a priori data and mitigate the computational burden, an alternative approach is to use a \emph{qualitative method} which seeks to determine the shape of the scatterer without determining its material
properties.  This type of approach started with the work of Colton and Kirsch \cite{col96} for the Helmholtz equation and has been expanded to include a variety of methods, as well as applications to electromagnetism and elasticity.   
Of particular relevance to our work is the \emph{Linear Sampling Method} (LSM)~\cite{ccm-book}, {\cite[Chapter 14]{MonkBook} and \cite[Pages 191-240]{CIME}} for Maxwell's equations (see also {the latter} for the related Generalized Linear Sampling Method).  This method solves a sequence of linear integral equations to construct an indicator function that can be used to determine the boundary of the scatterer.

While the frequency domain LSM is certainly easy to implement and 
can determine the shape of the scatterer for a variety of different types of scatterer, it requires a large amount of multi-static data. In an effort to decrease the number of source and receiver points, the
Time Domain LSM (TD-LSM) was proposed, and analyzed  for acoustic scattering by a sound soft object by Haddar et al.~\cite{CHLM10}.
Numerical results show that a coarser set of data can be used.  The TD-LSM was
then extended, with an improved numerical implementation, to scatterers with
an impedance boundary condition by Marmorat et al.~\cite{MarmoratEtal}. 
Following encouraging numerical results~\cite{GuoEtal}, an analysis of the
TD-LSM for
penetrable acoustic scatterers was given by Cakoni at al.~\cite{CMS-21}.  
This analysis is based on the use of Laplace transforms to prove continuity
estimates for the method and relies upon the localization of transmission
eigenvalues due to Vodev~\cite{Vodev-18}.

Our paper is devoted to extending the TD-LSM to the time 
domain electromagnetic inverse problem.  We give the first analysis and
preliminary numerical results for the method. The methods we use to prove the theoretical results
are extensions of the techniques in~\cite{CHLM10,MarmoratEtal,monksel16}
via Laplace transforms.  We choose to analyze scattering by an impenetrable
scatterer with an impedance boundary condition (suitable for an imperfect
conductor).  The same analysis can be adapted to the case of a perfect conductor and we shall show a numerical example of this case.  Unfortunately, for Maxwell's equations,  
it is not currently possible to analyze the case of a 
penetrable scatterer using
Laplace transforms: 
As shown by Cakoni et al.~\cite{cak21} for the
case of a spherically stratified medium, and by Vodev~\cite{Vodev-21} for more general scatterers, there does not exist a suitable half-plane of the complex plane
that is devoid of transmission eigenvalues, and this rules out the simple
use of Laplace transforms as was done for the Helmholtz equation~\cite{CMS-21}.
For a complete discussion of this issue, see~\cite{cak21}. 
Despite being unable to analyze the TD-LSM for penetrable scatterers, we can
still test the algorithm in this case.  In Section~\ref{sec:num} we provide
an example with a penetrable scatterer.  The reconstruction is very similar to
the case of perfect conducting or impedance scatterers, so suggesting that
the TD-LSM may be applicable in that case.

To test the TD-LSM, we use synthetic data computed via a nodal Discontinuous Galerkin (DG) method coupled with a low-storage explicit Runge-Kutta time stepping scheme \cite{hesthaven_warburton_book, carpenter94}. The DG method provides an efficient technique to numerically solve differential equations and has properties that make it well-suited for wave simulations, see e.g. \cite{HESTHAVEN2002186, WILCOX20109373, LI2014915, tl19}. These features include, e.g., high-order accuracy, straightforward handling of large discontinuities in the material parameters, and support for complex problem geometries. In addition, the method has excellent parallelization properties in both CPU and GPU environments, see e.g. \cite{Klockner09, godel10, Melander}. All of these are essential features for the numerical scheme to be used for solving complex wave problems.

The layout of this paper is as follows: in Section~\ref{sec:fwdproblem} we give details of the time dependent forward problem for electromagnetic scattering from a bounded scatterer with an impedance boundary condition, and derive relevant continuity estimates for the problem. In Section~\ref{sec:LSM} we formulate the TD-LSM for Maxwell's equations and prove analogues of the usual theorems regarding the performance of the TD-LSM.  In Section~\ref{sec:num}
we provide some details of the implementation of the inversion technique and 
then give some  numerical examples showing the performance of the method.

Throughout the paper, boldface font will be used to represent vector quantities.  For example
\[
\bfL^2(\Omega^c)=\{\bfu\;|\; \bfu=(u_1,u_2,u_3)^T\mbox{ and } u_{\i}\in L^2(\Omega^c),\, \i=1,2,3\} \, .
\]
We define $\mathbb{R}_+=\{x\in\mathbb{R}\;|\; x>0\}$. For a Hilbert space $X$, we consider $\mathcal{D}'(\mathbb{R},X)$ the space of $X$-valued distributions depending on $t\in\mathbb{R}$; we denote by $\mathcal{D}'(\mathbb{R}_+,X)$ the subset of causal distributions, that is, those distributions which vanish on $(-\infty,0)$.


\section{Maxwell's equations with an impedance boundary condition}\label{sec:fwdproblem}
We start by defining some notation and spaces.
Let $\Omega$ denote the scatterer. It is assumed to be a bounded {and possibly multiconnected} domain whose complement $\Omega^c=\mathbb{R}^3\setminus \overline{\Omega}$ is connected and whose boundary $ \Gamma=\partial  \Omega$ is  Lipschitz continuous. We denote by $\bfnu$ the unit outward normal to $\Omega$ on $\Gamma$.

The appropriate solution space in the Laplace domain will be a subspace of \[
\bfH (\curl,\Omega^c)=\{\bfw\in \bfL^2(\Omega^c) \; | \; \curl\bfw\in \bfL^2(\Omega^c) \}\, .\] 
To define such subspace we need the space of tangential 
square integrable functions on $\Gamma$:
\[
\bfL^2_T(\Gamma)=\{\bfu\in (L^2(\Gamma))^3\;|\; \bfnu\cdot\bfu=0\mbox{ a.e. on }\Gamma\}\, .
\]
Similarly, we consider
\[
\bfH^{\pm 1/2}_T(\Gamma)=\{\bfu\in (H^{\pm1/2}(\Gamma))^3\;|\; \bfnu\cdot\bfu=0\mbox{ a.e.  on }\Gamma\} \, .
\]

\subsection{Time domain Maxwell's equations}
In the time domain, Maxwell's equations for the causal electric field $\tilde{\calE}$ and magnetic field $\tilde{\calH}$ in $ \Omega^c \times \mathbb{R} $  are
\begin{eqnarray*}
\tilde{\epsilon}\,\tilde{\calE}_t-\curl\tilde{\calH}&=&\bf0\, ,\\
\tilde{\mu}\,\tilde{\calH}_t+\curl\tilde{\calE}&=& \bf0\, ,
\end{eqnarray*}
where we assume that there is no imposed current and that the material that fills $\Omega^c$ is lossless (e.g. air or vacuum). {Here we denote the spatially dependent electric permittivity and magnetic permeability by $\tilde{\epsilon}$ and $\tilde{\mu}$, respectively. Their counterparts  in vacuum are  denoted by $\epsilon_0$ and $\mu_0$.} We  suppose that the relative electric permittivity $\epsilon_r=\epsilon_0^{-1}\tilde{\epsilon}:\Omega^c\to\R^{3\times 3}$ and {relative} magnetic permeability $\mu_r=\mu_0^{-1}\tilde{\mu}:\Omega^c \to\R^{3\times 3}$ are symmetric matrix-valued functions with uniformly bounded entries that are piecewise in {$W^{1,\infty}(\Omega^c)$ and that $\epsilon_r=I_{3}$ and $\mu_r=I_{3}$ sufficiently far from the scatterer ($I_{3}$ stands for the $3\times 3$ identity matrix).} They are also assumed to be uniformly positive definite almost everywhere in $\Omega^c$. 

The electromagnetic field is assumed to be subject to the following impedance boundary condition, that models, for example, an imperfectly conducting body:
\[
\tilde{\calH}\times\bfnu+\tilde{\Lambda}\tilde{\calE}_T=\tilde{\calG}\quad \mbox{on }  \Gamma \times \mathbb{R} \, .
\]
Here we denote by $\tilde{\calE}_T=(\bfnu\times\tilde{\calE})\times\bfnu$ the tangential trace of the electric field $\tilde{\calE}$, and this notation will be used in the sequel for such a trace of any smooth enough vector field. Concerning the datum $\tilde{\calG}$, it is a causal tangential vector field that is usually obtained from the trace of a smooth incident field, as we will detail in the following sections. Moreover, the matrix-valued function $\tilde{\Lambda}:\Gamma\to\mathbb{R}^{3\times 3}$
is assumed to be uniformly bounded and symmetric almost everywhere on $\Gamma$. We also assume that  $\tilde{\Lambda}$ maps tangential vectors to tangential vectors, for which it is uniformly positive definite. By this we mean that there exists $\tilde{\Lambda}_{\min}>0 $ such that, for almost every $\bfx$ on $\Gamma$ and for each vector $\bfxi\in\mathbb{R}^3$ that is
tangential to $\Gamma$ at $\bfx$ (i.e. $\bfxi\cdot\bfnu(\bfx)=0$), also the vector $\tilde{\Lambda}(\bfx)\bfxi$ is tangential to $\Gamma$ at $\bfx$ (i.e. $\tilde\Lambda(\bfx)\bfxi\cdot\bfnu(\bfx)=0$) and it holds that $\tilde{\Lambda}(\bfx)\bfxi\cdot\overline{\bfxi}\geq \tilde{\Lambda}_{\min}\,|\bfxi|^2$. 

The speed of light in vacuum is given by $c_0=(\epsilon_0\mu_0)^ {-1/2}$. Then, following \cite[Section 6.1]{ColtonKress}, we  rescale the electric and magnetic fields:
\[
 \calE=\epsilon_0^{1/2}\tilde{\calE}\qquad \mbox{and}\qquad
 \calH=\mu_0^{1/2}\tilde{\calH}\, .
\]
The rescaled electromagnetic field is still causal and satisfies
\begin{eqnarray*}
c_0^{-1}{\epsilon}_r\,\calE_t-\curl\calH =\bf0  &\qquad\text{in } \Omega^c \times \mathbb{R} \, ,\\
c_0^{-1}\mu_r\,\calH_t+\curl\calE  =\bf0 &\qquad\text{in } \Omega^c \times \mathbb{R}\, ,
\end{eqnarray*}
and is subject to the impedance boundary condition 
\begin{equation}
\label{eq:sma_1}
\calH\times\bfnu+\Lambda\calE_T=\calG\qquad\text{on } \Gamma \times \mathbb{R} \, .
\end{equation}
Above we have set $\Lambda=Z_0\tilde{\Lambda}$ and $\calG=\mu_0^{1/2}\tilde{\calG}$, where   $Z_0=(\mu_0/\epsilon_0)^{1/2}$ is the impedance of free space.  Notice that there is no need for a radiation condition in the time domain under the causality assumption. This is because of the finite speed of propagation of electromagnetic waves, so that at any time $t$ there is a large enough ball in $\mathbb{R}^3$ for which the field vanishes outside of this ball.

The problem is typically rewritten in terms of either the rescaled electric or magnetic field. Here we opt for the former: More precisely, we rewrite the second equation as $\calH_t= -c_0 \mu_r^{-1}\curl\calE$ and use this in the time derivative of  the remaining equations to obtain 
\begin{eqnarray}
c_0^{-2}{\epsilon}_r\calE_{tt}+\curl 
(\mu_r^{-1}\curl\calE) \, =\, \bf0 \qquad & \mbox{in } \Omega^c \times \mathbb{R}\, , \label{eq:TDfwd_Omegac}\\
(\mu_r^{-1}\curl\calE)\times\bfnu-c_0^{-1}{\Lambda}\calE_{t,T} \, =\, -c_0^{-1}\calG_t \qquad & \mbox{on }\Gamma \times \mathbb{R}\, . \label{eq:TDfwd_Gamma}
\end{eqnarray}

\subsection{Analysis of the forward problem based on the Fourier-Laplace transform}

Let us first recall some basic facts about the Fourier-Laplace transform, cf.~\cite{Lubich, MarmoratEtal} and \cite[Chapters 2-3]{Sayas2016}, that will be used here.
For a Banach space $X$, let $\mathcal{D}'(\R;X)$ and $\mathcal{S}'(\R;X)$ represent the space of $X$-valued distributions and tempered
distributions on the real line, respectively. For any $s_0\in\R$, $s_0>0$, we set
$\mathcal{L}'_{s_0}(\R;X) = \{ f\in\mathcal{D}'(\R;X) \;|\; e^{-s_0t}f\in \mathcal{S}'(\R;X)\}$.  This allows us to consider the Laplace transform of any   $f\in \mathcal{L}'_{s_0}(\R;X)$ such that $e^{-s_0t}f\in {L}^1(\R;X)$, defined by 
$$
\mathcal{L} [f] (s) = \int_{-\infty}^{\infty} e^{ist} f(t) dt \qquad \mbox{for a.e. } s\in\C_{s_0}\, ,
$$
where $\mathbb{C}_{s_0} = \{ s\in\mathbb{C}\;|\;\Im ( s) > s_0 \} $. 
In particular, when $X$ is a Sobolev space (e.g. $X=\bfH(\curl,\Omega^c)$), for any $p\in\mathbb{R}$ we consider the Hilbert space 
$$
H_{s_0}^{p}(\mathbb{R}; X)=\left\{ f\in \mathcal{L}'_{s_0}(\R;X) \;\big{|} \; \displaystyle\int_{-\infty+is_0}^{\infty+is_0} |s|^{2p} ||\mathcal{L}[f](s)||_X^2\, ds <\infty \right\}
$$
endowed with the norm $||f||_{H_{s_0}^{p}(\mathbb{R}; X)}=\Big(\!\displaystyle\int_{-\infty+is_0}^{\infty+is_0} |s|^{2p} ||\mathcal{L}[f](s)||_X^2\, ds \Big)^{1/2}$. We will make use of the well-known Plancherel's theorem, which relates the norm of a function $f$ in $H^p_{s_0}(\R;X)$ with the weighted norms of $\mathcal{L}[f](s)$ in $X$. Indeed, {rewriting the Fourier-Laplace transform in terms of the usual Fourier transform,} Plancherel's theorem  leads to
\begin{equation}\label{eq:Plancherel}
\Vert \mathcal{L} [f] (s)\Vert_{X} =\Vert \mbox{{$t\mapsto$}}e^{-\Im (s) t}f(t)\Vert _X\, ;
\end{equation}
in some situations, this is useful to deduce  bounds of time dependent fields, e.g.
$$
 \int_{-\infty+is_0}^{+\infty+is_0}  \| \mathcal{L}[f](s)\|^2_{X} \, ds = \int_{-\infty}^{+\infty} e^{-2s_0t}\|f(t) \|_X^2 \, dt  = \Vert f\Vert^2_{L^2_{s_0}(\R; X)}\, .
$$
We will make use of the Fourier-Laplace transform and get information back to the time domain thanks to the following result, see \cite{Lubich} and \cite[Chapter 3]{Sayas2016}.
\begin{lemma}\label{lem:lubich}
We consider two Banach spaces $X$ and $Y$, and write $\mathcal{B}(X,Y)$ to represent the space of linear and bounded operators from $X$ into $Y$. Let $s\in \mathbb{C}_{s_0} \mapsto f_s\in \mathcal{B}(X,Y)$ be an analytic function for which there exist $r\in\mathbb{R}$ and $C>0$ such that
$$
\Vert f_s\Vert _{\mathcal{B} (X;Y )} \leq C |s|^r \quad\mbox{ for a.e. } s\in \mathbb{C}_{s_0}\, .
$$
Set $F(t) = \displaystyle\frac{1}{2\pi} \displaystyle\int_{-\infty+is_0}^{+\infty+is_0} e^{-ist}\, f_s\, ds$, and $\mathcal{F}g(t) = (F*g)(t) = \displaystyle\int_{-\infty}^{+\infty} F(\tau) g(t-\tau)\, d\tau $ the associated convolution
operator. Then, for all $p \in\mathbb{R}$, $\mathcal{F}$ extends to a bounded operator from $H_{s_0}^{p+r}(\mathbb{R}; X)$ to $H^p_{s_0}(\mathbb{R};Y )$.
\end{lemma}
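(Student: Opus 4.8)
The plan is to transport the entire computation to the Laplace domain, where the convolution defining $\mathcal{F}$ turns into a pointwise product of operators and where the norms on $H_{s_0}^{p+r}(\mathbb{R};X)$ and $H_{s_0}^{p}(\mathbb{R};Y)$ are, by construction, weighted $L^2$-norms of the Laplace transform along the line $\Im s = s_0$. The single operational fact that drives everything is that $F$ is the inverse Laplace transform of $s\mapsto f_s$, so that $\mathcal{L}[F](s)=f_s$, and hence, by the convolution theorem for the Laplace transform,
\[
\mathcal{L}[\mathcal{F}g](s)=\mathcal{L}[F*g](s)=f_s\,\mathcal{L}[g](s)\qquad\text{for a.e. }s\in\mathbb{C}_{s_0}.
\]
The analyticity of $s\mapsto f_s$ together with the half-plane bound $\Vert f_s\Vert_{\mathcal{B}(X,Y)}\le C|s|^r$ are exactly the Paley--Wiener conditions that make $F$ a \emph{causal} operator-valued distribution and render the choice of contour $\Im s=s_0$ immaterial, which is what legitimises treating $\mathcal{F}$ as a genuine causal convolution.

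Granting this identity, the estimate is essentially one line. By the definition of the weighted norm and then the pointwise operator bound $\Vert f_s\,\mathcal{L}[g](s)\Vert_Y\le\Vert f_s\Vert_{\mathcal{B}(X,Y)}\Vert\mathcal{L}[g](s)\Vert_X\le C|s|^r\Vert\mathcal{L}[g](s)\Vert_X$, I would write
\[
\Vert\mathcal{F}g\Vert_{H_{s_0}^{p}(\mathbb{R};Y)}^2
=\int_{-\infty+is_0}^{+\infty+is_0}|s|^{2p}\,\Vert f_s\,\mathcal{L}[g](s)\Vert_Y^2\,ds
\le C^2\int_{-\infty+is_0}^{+\infty+is_0}|s|^{2(p+r)}\,\Vert\mathcal{L}[g](s)\Vert_X^2\,ds
=C^2\,\Vert g\Vert_{H_{s_0}^{p+r}(\mathbb{R};X)}^2.
\]
This shows the operator norm of $\mathcal{F}$ is at most $C$ and makes transparent the role of the growth exponent: $r$ is precisely the shift in the Sobolev index needed to absorb the polynomial growth of $f_s$.

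The only genuine work is the functional-analytic bookkeeping that justifies the first paragraph, and this is where I expect the difficulty to lie. Since $f_s$ merely grows polynomially, the contour integral defining $F$ need not converge absolutely, so $F$ is only a distribution and the convolution $F*g$ cannot be read off naively; the convolution theorem therefore has to be proved rather than quoted. I would first establish the result on a dense subspace of $H_{s_0}^{p+r}(\mathbb{R};X)$ consisting of those $g$ whose Laplace transform decays faster than any power of $|s|$ along the line $\Im s=s_0$ (equivalently, via the isometry $g\mapsto\mathcal{L}[g]$ onto the weighted $L^2$-space, the functions that are Schwartz-like in $s$, which are dense because each weight $|s|^{2q}$ is finite and positive). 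For such $g$ the product $f_s\,\mathcal{L}[g](s)$ is integrable against every polynomial weight, so its inverse Laplace transform converges and defines $\mathcal{F}g$ as an honest causal function, and the convolution identity above together with the Plancherel relation \eqref{eq:Plancherel} hold classically.

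Finally, because the bound $\Vert\mathcal{F}g\Vert_{H_{s_0}^{p}(\mathbb{R};Y)}\le C\,\Vert g\Vert_{H_{s_0}^{p+r}(\mathbb{R};X)}$ obtained on this dense subspace is uniform, $\mathcal{F}$ extends by continuity to a bounded operator from $H_{s_0}^{p+r}(\mathbb{R};X)$ into $H_{s_0}^{p}(\mathbb{R};Y)$, which is the assertion of the lemma. For the vector-valued Laplace-transform machinery underlying the causality, contour-independence, and convolution theorem, I would rely on the standard framework in \cite{Lubich,Sayas2016}; the estimate itself requires nothing beyond the operator bound and the definition of the weighted spaces.
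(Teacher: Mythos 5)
The paper does not actually prove this lemma: it is quoted as a known result with a citation to Lubich and to Sayas's book, and no argument is given in the text. Your proposal reconstructs precisely the standard argument underlying that cited result --- diagonalize the convolution via the Fourier--Laplace transform, apply the pointwise operator bound $\Vert f_s\Vert_{\mathcal{B}(X,Y)}\le C|s|^r$ under the weighted Plancherel identity defining the $H^q_{s_0}$ norms, and extend from a dense class of $g$ with rapidly decaying transforms --- and it is correct, including your identification of the only genuinely delicate point (that $F$ is a priori only an operator-valued distribution, so the convolution theorem must be established on a dense subspace rather than quoted).
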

In all the sequel, we will replace $\R$ by $\R_+$ in the above defined spaces in order to denote the corresponding subspaces of causal functions. In this sense, Paley-Wierner theory will be frequently applied to study casuality, cf. \cite[Sections 2.1 and 3.1]{Sayas2016}.

We next use the Fourier-Laplace transform to study the forward problem at hand. More precisely, when we formally take such a transform in the time domain equations (\ref{eq:TDfwd_Omegac}-\ref{eq:TDfwd_Gamma}) we get
\begin{eqnarray}
\curl (\mu_r^{-1}\curl\bfE_s)-c_0^{-2}{s^2}{\epsilon}_r\bfE_{s} \, =\, \bf0 \qquad & \mbox{ in } \Omega^c\, , \label{eq_FL_1}\\
(\mu_r^{-1}\curl\bfE_s)\times\bfnu+isc_0^{-1} {\Lambda} \bfE_{s,T} \, =\, ic_0^{-1}{s}\bfG_s\qquad&\mbox{on }\Gamma\, , \label{eq_FL_2}
\end{eqnarray}
where we denote $\bfE_s(\bfx)=\mathcal{L}[\calE (t,\bfx)](s)$ and $\bfG_s(\bfx)=\mathcal{L}[\calG (t,\bfx)](s)$. 
Notice that, provided $\Im(s)>s_0>0$, there is no need to impose a Silver-M\"uller radiation condition in the Fourier-Laplace domain fields but it suffices to require
$\bfE_s\in \bfH (\curl;\Omega^c)$. Also notice that  $\bfH_s(\bfx)=\mathcal{L}[\calH (t,\bfx)](s)$  can be recovered from $\bfE_s$ by taking the Fourier-Laplace transform of  $\calH_t= -c_0 \mu_r^{-1}\curl\calE$, which leads to  $-is\bfH_s= -c_0 \mu_r^{-1}\curl\bfE_s$.

Next we obtain a variational formulation of the scattering problem (\ref{eq_FL_1}-\ref{eq_FL_2}). 
To this end, we multiply both sides of
equation (\ref{eq_FL_1}) by the complex conjugate {$\overline{\bfv}$} of a smooth test
function $\bfv$ of compact support and
integrate by parts in $\Omega^{c}$ to obtain:
\begin{eqnarray*}
0 
& = &  \int_{\Omega^c} \left( (\mu_r^{-1}\curl\bfE_s)\cdot \curl \overline{\bfv} - k_s^2\,  {\epsilon}_r  \bfE_{s} \cdot \overline{\bfv} \right) dV 
-\int_{\Gamma}\bfnu\times (\mu_r^{-1}\curl\bfE_s ) \cdot \overline{\bfv} \, dA \, ,
\end{eqnarray*}
where $k_s=\displaystyle\frac{s}{c_0}$. 
We make use of the impedance boundary condition to rewrite the integral on $\Gamma$ as
\begin{eqnarray*}
-\int_{\Gamma}\bfnu\times (\mu_r^{-1}\curl\bfE_s ) \cdot \overline{\bfv} \, dA = 
ik_s\int_{\Gamma}
 (- \Lambda\bfE_{s,T} + \bfG_s )  \cdot \overline{\bfv}_T \, dA  \, .
\end{eqnarray*}
Now we need to define the solution space
\[
\XX=\{\bfv\in \bfH(\curl,\Omega^c)\, |\, \bfv_T\in \bfL^2_T(\Gamma)\}
\]
endowed with the norm {$\Vert \bfv\Vert_\XX=(\Vert \bfv \Vert_{\bfH(\curl,\Omega^c)}^2+\Vert \bfv_T\Vert_{\mathbf{L}^2(\Gamma)}^2)^{1/2} $}. Using the density of {$\boldsymbol{\mathcal{C}}_0^\infty(\overline{\Omega^c})$} in $\XX$ (cf. {\cite[Th. 3.54]{MonkBook} for a bounded Lipschitz domain}), the variational form of the Fourier-Laplace domain forward problem is then to find $\bfE_s\in \XX$ that satisfies
\begin{eqnarray*}
\int_{\Omega^c} \left( (\mu_r^{-1}\curl\bfE_s)\cdot \curl \overline{\bfv} - k_s^2\,   {\epsilon}_r \bfE_{s} \cdot \overline{\bfv} \right) dV
-ik_s\int_{\Gamma} \Lambda\bfE_{s,T} \cdot \overline{\bfv}_T \, dA 
=
-ik_s \int_{\Gamma} \bfG_{s}  \cdot \overline{\bfv}_T\, dA\, 
\end{eqnarray*}
for any  $\bfv\in \XX$.

In order to study this variational formulation, we suppose that $s\in \mathbb{C}_{s_0}$ for some fixed $s_0>0$ and consider the sesquilinear form associated to the left-hand side:
\begin{eqnarray*}
a_s(\bfu,\bfv)=\int_{\Omega^c} \left( \left(\mu_r^{-1}\curl\bfu\right)\cdot \curl \overline{\bfv} - k_s^2\, \epsilon_r  \bfu \cdot \overline{\bfv} \right) dV
 - i k_s \int_{\Gamma} \Lambda\bfu_{T} \cdot \overline{\bfv}_T \, dA \, .
\end{eqnarray*}
Using the approach in \cite{BHa86}, notice that
\begin{eqnarray*}
-\overline{s} a_s(\bfv,\bfv)=\int_{\Omega_c} \left( \left(-\overline{s} \mu_r^{-1}\curl\bfv\right)\cdot \curl \overline{\bfv} + s\, \left(\frac{|s|}{c_0}\right)^2  {\epsilon}_r  \bfv \cdot \overline{\bfv} \right) dV
 + i \frac{|s|^2}{c_0}  \int_{\Gamma} \Lambda\bfv_{T} \cdot \overline{\bfv}_T\, dA \, .
\end{eqnarray*}
The imaginary part of this expression can be studied term by term under our assumptions on the coefficients to obtain the following inequality:
\begin{eqnarray*}
\Im \left(-\overline{s} a_s(\bfv,\bfv)\right) & \geq& s_0 \mu_{r,\max}^{-1} \Vert\curl\bfv\Vert_{0,\Omega^c}^2 + \displaystyle \left(\frac{|s|}{c_0}\right)^2  s_0  \epsilon_{r,\min} \,  \Vert \bfv\Vert _{0,\Omega^c}^2 + \frac{|s|^2}{c_0}\Lambda_{\min} \Vert \bfv_T\Vert ^2_{0,\Gamma}\, \\
&\geq&\min\left(s_0 \mu_{r,\max}^{-1},\displaystyle \left(\frac{|s|}{c_0}\right)^2  s_0  \epsilon_{r,\min} ,\frac{|s|^2}{c_0}\Lambda_{\min}\right) \Vert \bfv\Vert_{\XX}^2 \, .
\end{eqnarray*}
Here and in the sequel, $\mu_{r,\max}^{-1}>0$, $\epsilon_{r,\min}>0$ and $\Lambda_{\min}=Z_0\tilde{\Lambda}_{\min} >0$ are positive constants associated with the positive definiteness properties of the coefficient functions $\mu_{r}$, $\epsilon_{r}$ and $\Lambda$, respectively.
Also notice that, for any fixed $s\in\C $, the boundedness of the sesquilinear form $a_s:\XX\times \XX\to\C$ follows from the assumed uniform boundedness of the coefficient functions and the definition of the space $\XX$. Therefore, the Lax-Milgram lemma guarantees that there exists a unique solution $\bfE_s\in \XX $ such that
$$
a_s(\bfE_s ,\bfv)\, =\,
-i\frac{s}{c_0} \int_{\Gamma} \bfG_{s}  \cdot \overline{\bfv}_T \, dA \quad\forall \bfv\in\XX \, .
$$
To allow us to go back to the time domain problem, we need bounds on $\bfE_s$ which make explicit the dependence on $s$. With this aim, we notice that, since
$$
\Im(-\overline{s}a_s(\bfE_s ,\bfE_s))\, =\,
\Im\left(i\,\frac{|s|^2}{c_0} \int_{\Gamma} \bfG_{s}  \cdot \overline{\bfE}_{s,T}\, dA \right) = \frac{|s|^2}{c_0}\, \Re \left(\int_{\Gamma} \bfG_{s}  \cdot \overline{\bfE}_{s,T}\, dA \right) ,
$$
it follows that
$$
s_0\, \mu_{r,\max}^{-1} \Vert \curl\bfE_s\Vert _{0,\Omega^c}^2 + \displaystyle \big(\frac{|s|}{c_0}\big)^2  s_0  \epsilon_{r,\min}  \,  \Vert \bfE_{s}\Vert _{0,\Omega^c}^2 + \frac{|s|^2}{c_0}\Lambda_{\min} \Vert \bfE_{s,T}\Vert ^2_{0,\Gamma}\,\leq \,
\frac{|s|^2}{c_0} \, \Re \left(\int_{\Gamma} \bfG_{s}  \cdot \overline{\bfE}_{s,T}\, dA \right) .
$$
By the Cauchy-Schwarz  inequality, we get the bound
$$
s_0 \,\mu_{r,\max}^{-1} \Vert \curl\bfE_s\Vert _{0,\Omega^c}^2 + \displaystyle \big(\frac{|s|}{c_0}\big)^2   s_0 \, \epsilon_{r,\min} \,  \Vert \bfE_{s}\Vert _{0,\Omega^c}^2  +\frac{|s|^2}{2c_0}\,\Lambda_{\min} \,\Vert \bfE_{s,T}\Vert ^2_{0,\Gamma}\,\leq \,
\frac{|s|^2}{2c_0}\,\Lambda_{\min}^{-1}\, \Vert \bfG_{s}\Vert ^2_{0,\Gamma}\,  ,
$$
which allows us to apply Lemma \ref{lem:lubich} to go back to the time domain and guarantee the following result. Notice that causality preservation is straightforward by the Paley-Wiener theory.

We have thus proved the following result:
\begin{lemma}\label{th:FwdProbOK}
For any $p\in\mathbb{R}$ and $\boldsymbol{\mathcal{G}}$ in $H^p_{s_0}(\mathbb{R};\bfL^2_T(\Gamma ))$, there exists a unique solution $\boldsymbol{\mathcal{E}}$ to (\ref{eq:TDfwd_Omegac}-\ref{eq:TDfwd_Gamma}), which is  bounded in  $H^p_{s_0}(\mathbb{R};\bfL^2(\Omega^c))$ and $H^{p-1}_{s_0}(\mathbb{R};\bfH(\curl,\Omega^c))$ in terms of the datum $\boldsymbol{\mathcal{G}}\in H^p_{s_0}(\mathbb{R};\bfL^2_T(\Gamma ))$. Its tangential trace lies in $H^p_{s_0}(\mathbb{R};\bfL_T^2(\Gamma))$. Moreover, causality is preserved: if $\boldsymbol{\mathcal{G}}$ belongs to $H^p_{s_0}(\mathbb{R}_{+};\bfL^2_T(\Gamma ))$, then the solution $\boldsymbol{\mathcal{E}}$ is in $H^p_{s_0}(\mathbb{R}_{+};\bfL^2(\Omega^c))$ and in $H^{p-1}_{s_0}(\mathbb{R}_{+};\bfH(\curl,\Omega^c))$. 
\end{lemma}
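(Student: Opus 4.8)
The plan is to solve the problem in the Fourier–Laplace domain uniformly in the transform variable $s\in\mathbb{C}_{s_0}$, extract bounds that make the dependence on $s$ explicit, and then transfer back to the time domain by invoking Lemma~\ref{lem:lubich}. First I would record that taking the Fourier–Laplace transform of (\ref{eq:TDfwd_Omegac}-\ref{eq:TDfwd_Gamma}) yields the stationary problem (\ref{eq_FL_1}-\ref{eq_FL_2}), whose variational form is governed by the sesquilinear form $a_s$ on $\XX\times\XX$. The coercivity estimate for $\Im(-\overline{s}\,a_s(\bfv,\bfv))$ together with the boundedness of $a_s$ lets the Lax–Milgram lemma produce, for each fixed $s\in\mathbb{C}_{s_0}$, a unique $\bfE_s\in\XX$ solving the variational equation with right-hand side driven by $\bfG_s$.

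Next I would test with $\bfv=\bfE_s$ and combine the coercivity lower bound with a Cauchy–Schwarz and Young estimate on the boundary term $\frac{|s|^2}{c_0}\Re\int_\Gamma\bfG_s\cdot\overline{\bfE}_{s,T}\,dA$, absorbing the trace contribution of $\bfE_s$ into the left-hand side. The outcome is an inequality whose left side dominates $s_0\mu_{r,\max}^{-1}\Vert\curl\bfE_s\Vert_{0,\Omega^c}^2+(|s|/c_0)^2 s_0\epsilon_{r,\min}\Vert\bfE_s\Vert_{0,\Omega^c}^2+\frac{|s|^2}{2c_0}\Lambda_{\min}\Vert\bfE_{s,T}\Vert_{0,\Gamma}^2$ and whose right side is $\frac{|s|^2}{2c_0}\Lambda_{\min}^{-1}\Vert\bfG_s\Vert_{0,\Gamma}^2$. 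Dividing each left-hand term by its coefficient, the solution operator $\bfG_s\mapsto\bfE_s$ is bounded into $\bfL^2(\Omega^c)$ with no growth in $s$ (rate $r=0$), its composition with $\curl$ grows like $|s|$ (rate $r=1$), so the map into $\bfH(\curl,\Omega^c)$ has rate $r=1$ (using $|s|\geq s_0>0$), and the trace map $\bfG_s\mapsto\bfE_{s,T}$ is again of rate $r=0$.

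With these estimates in hand, I would apply Lemma~\ref{lem:lubich} to each operator, using that $s\mapsto\bfE_s$ is $\mathcal{B}$-valued analytic on $\mathbb{C}_{s_0}$: the coefficients $s^2/c_0^2$ and $s/c_0$ entering $a_s$ are entire, and the uniform invertibility furnished by coercivity makes the Lax–Milgram solution depend analytically on $s$. The rate-$0$ bounds give $\mathcal{F}\colon H^p_{s_0}(\mathbb{R};\bfL^2_T(\Gamma))\to H^p_{s_0}(\mathbb{R};\bfL^2(\Omega^c))$ and the analogous trace statement, while the rate-$1$ bound gives $\mathcal{F}\colon H^p_{s_0}(\mathbb{R};\bfL^2_T(\Gamma))\to H^{p-1}_{s_0}(\mathbb{R};\bfH(\curl,\Omega^c))$, which is exactly the loss of one Sobolev index recorded in the statement. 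Causality follows from Paley–Wiener theory: the analyticity and polynomial bounds persist on the whole half-plane $\Im(s)>0$, so the convolution kernel $F$ of Lemma~\ref{lem:lubich} is supported in $[0,\infty)$, and convolving a causal datum $\calG$ with a causal kernel returns a causal field $\calE$.

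I expect the main obstacle to be the careful bookkeeping of the $|s|$-powers in the second step, since it is precisely the one-power discrepancy between the $\bfL^2$ control of $\bfE_s$ and the weaker $|s|$-control of $\curl\bfE_s$ that forces the $\bfH(\curl,\Omega^c)$ estimate into $H^{p-1}_{s_0}$ rather than $H^p_{s_0}$; getting this split right, and confirming that the boundary term can be absorbed without degrading the $s$-dependence, is where the argument must be handled with care. A secondary point worth verifying explicitly is the analyticity of the solution operator, which is what legitimizes the appeal to Lemma~\ref{lem:lubich}.
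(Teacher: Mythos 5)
Your proposal is correct and follows essentially the same route as the paper: Fourier--Laplace transform to the variational problem for $a_s$ on $\XX$, coercivity of $\Im(-\overline{s}\,a_s(\bfv,\bfv))$ plus Lax--Milgram for well-posedness at each $s$, testing with $\bfE_s$ and Cauchy--Schwarz to obtain the explicit $|s|$-rates ($r=0$ for $\bfL^2(\Omega^c)$ and the tangential trace, $r=1$ for $\curl$), then Lemma~\ref{lem:lubich} to return to the time domain and Paley--Wiener for causality. Your bookkeeping of the powers of $|s|$, including the one-index loss for the $\bfH(\curl,\Omega^c)$ bound, matches the paper's estimates exactly.
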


We define the solution operator $\mathcal{Q}$ that maps the datum  $\boldsymbol{\mathcal{G}}_t$ onto the solution $\boldsymbol{\mathcal{E}}$ of problem (\ref{eq:TDfwd_Omegac}-\ref{eq:TDfwd_Gamma}). So defined, we have just shown that
$$
\mathcal{Q}:\, \boldsymbol{\mathcal{G}}_t\in H^p_{s_0}(\mathbb{R};\bfL^2_T(\Gamma )) \mapsto \calE \in H^{p+1}_{s_0}(\mathbb{R};\bfL^2(\Omega^c))\cap H^{p}_{s_0}(\mathbb{R};\XX) 
$$
is bounded for all $p\in\R$, and preserves causality.

\section{The inverse problem and the linear sampling method}
\label{sec:LSM}

We now formulate precisely the inverse problem we shall study. We assume that the unknown scattering object $\Omega$ is illuminated by incident fields that are due to regularized point sources (see (\ref{EI_TD_1}) below)
which are a model of a source of electromagnetic waves. Each source point is placed on a fixed surface $\GammaI \subset \R^3$.
We seek to reconstruct the scatterer $\Omega$ from measurements of the scattered fields corresponding to those incident fields on a possibly different surface
$\GammaM \subset \R^3$, which is a  model for a measurement device.
Both $\GammaI$ and $\GammaM$ are piecewise smooth surfaces, and are allowed to be open or closed.  When either surface is closed we assume $\Omega$ is enclosed by that surface. In the case when $\GammaI$ (or $\GammaM$) is open, we suppose that it is a  subset of an analytic closed surface $\tilde{\Gamma}_{\mathrm{I}}$ (or $\tilde{\Gamma}_{\mathrm{M}}$, respectively) that encloses $\Omega$. {In our analysis it will be useful to let $\OmegaI$ denote the domain enclosed by $\GammaI$ (or the domain enclosed by the analytic  surface $\tilde{\Gamma}_{\rm{}I}$ containing $\GammaI$ when $\GammaI$ is open).  Similarly $\OmegaM$ is defined in the same way using $\GammaM$ in place of $\GammaI$.}

In order to describe the regularized point sources that we consider, we fix a polarization $\p\in\R^3\setminus\{\boldsymbol{0}\}$, a source point $\y \in \GammaI$, and a smooth
function $\chi \in \mathcal{C}^\infty_0(\R_+)$ that models a modulation function in time. Then we take regularized incident magnetic dipoles defined in the classical sense for
$\x \not = \y$ and $t \in \R$ by:
\begin{equation}
  \mathcal{E}^i(\x,t;\y,\p) = \curl_{\x}
    \left( \p\, \Phi_{\chi} (\x-\y,t)\,  \right)
  = - \p \times \nabla_{\x} \Phi_{\chi} (\x-\y,t) \, . \label{EI_TD_1}
\end{equation}
Here
\[
\Phi_{\chi}(\x,t)=\displaystyle\frac{\chi(t-c_0^{-1}|\x|)}{4\pi|\x|}
\]
is the regularized counterpart of the fundamental solution of the time dependent wave equation \[
\Phi(\x,t) = \displaystyle\frac{\delta(t-c_0^{-1}|\x|)}{4\pi|\x|}\,,
\] 
where $\delta$ denotes the Dirac delta distribution. In particular, these dipoles are divergence free away from the sources, that is, $\Ddiv_{\x}  \mathcal{E}^i(\x,t;\y,\p) = 0$ for $(\x,t)\in(\R^3\setminus\{ \y\})\times\R$. Also notice that they are the regularized time dependent counterparts of the  magnetic dipoles proposed in~\cite[Page 230]{ColtonKress} for the frequency domain. Moreover, they can be written as the convolution in time of the modulation function $\chi$ and  the fundamental solution of the wave equation as follows:
\begin{equation}
  \mathcal{E}^i(\x,t;\y,\p)
  = \chi \ast \curl _{\x} (\p\,  \Phi(\x-\y,\cdot)) 
  = \int_{\R} \chi(t-\tau)\, \curl_{\x} \left(\p \,\Phi(\x-\y,\tau)\right) d{\tau}\, .
\label{Ei_TD}   
\end{equation}
Let us recall that the fundamental solution of the wave equation satisfies, in the distributional sense,
\[
  c_0^{-2} \Phi_{tt} (\x,t)
  - \Delta_{\x} \Phi (\x,t) \, = \, \delta(|\x|) \,\delta(t)
  \qquad \text{for } (\x ,t)\in \R^3 \times \R \, .
\]
Thus, since the regularized dipole is  divergence free away from the source point $\y$, we have in the distributional sense that
\begin{equation}
  \label{eq:incPointSource}
  c_0^{-2}  \mathcal{E}^i_{tt} (\x,t;\y,\p)
  + \curl_{\x} \curl_{\x} ( \mathcal{E}^i(\x,t;\y,\p) )
  = \chi(t) \, \curl_{\x} \left(\p\, \delta(|\x-\y|) \right) \qquad \mbox{for  } (\x ,t) \in\R^3 \times \R \, .
\end{equation}

Let $\mathcal{E}(\x,t;\y,\p)$ denote  the scattered field corresponding to the incident field $\mathcal{E}^i(\x,t;\y,\p)$. The linearity of Maxwell's equations~(\ref{eq:TDfwd_Omegac}-\ref{eq:TDfwd_Gamma}) shows that the scattered field for a superposition of incident fields equals the superposition of the corresponding scattered fields. More generally, for a function $\f: \GammaI \times \R \to \R^3$ with $\f\in L^2(\mathbb{R}, \bfL_T^2(\GammaI))$, we may consider  the incident field {that is the superposition of fields due to dipoles whose polarizations are given by $\f$:}
\[
  (\Mop^i_\chi \mathbf{f} )(\x,t)
  \, =\, \int_{\R} \int_{\GammaI} 
    \mathcal{E}^i(\x,t-\tau;\y,\mathbf{f}(\y,\tau)) \, dA_{\y} \, d{\tau} \qquad \mbox{for } (\x,t) \in (\R^3\setminus\GammaI) \times \R \,.
\]
{Notice that this field is the counterpart of the usual Herglotz wave function in the acoustics setting. We also consider} the corresponding {generalized} scattered field {formed by a
weighted superposition of scattered fields due to dipoles}
$$
({\Mop}_\chi \mathbf{f} )(\x,t)\, =\, \int_{\R} \int_{\GammaI}
\mathcal{E}(\x,t-\tau;\y,\mathbf{f}(\y,\tau)) \, dA_{\y} \, d{\tau} \qquad \mbox{for } (\x,t) \in \Omega^c \times \R \, .
$$
In the following, we will make use of polarizations given by tangential fields on $\GammaI\times\R$, and then measure the tangential component of the scattered field on $\GammaM\times\R$. Accordingly,  we define the \emph{near-field operator}
applied to a vector function $\f\in L^2_{s_0}(\R;\bfL^2(\GammaI))$ by
\begin{equation}
 (\mathcal{N}_\chi \mathbf{f})(\x,t) \, =  \int_{\R} \int_{\GammaI}
    \mathcal{E}_T(\x,t-\tau;\y,\mathbf{f}(\y,\tau)) \, dA_{\y} \, d\tau
   \qquad \mbox{for } (\x,t) \in \GammaM \times \R \, ,
\label{NFO}
\end{equation}
where, as usual, the subscript $T$  refers to the tangential trace here taken on $\GammaM$ (i.e. $\mathbf{v} _T= (\boldsymbol{\nu} \times \mathbf{v} )\times \boldsymbol{\nu}$).

Concerning causality, we emphasize that even for a causal field $\mathbf{f}\in L^2(\R_+;\bfL^2_T(\GammaI))$, the corresponding incident field ${\Mop}^i_{\chi}\f$ is not necessarily causal (and hence, nor is the scattered field ${\Mop}_{\chi}\f$). However, the measured data $(\x,t) \mapsto \mathcal{E}_T(\x,t;\y,\p) $ that represents the kernel of the integral operator $\mathcal{N}_\chi$, are tangential components of causal electromagnetic waves. 

For later use, we note that the incident field operator ${\Mop}^i_\chi$ can be represented as the convolution in time of the modulation function $\chi$ with the vector potential defined by the non-regularized  magnetic dipole operator. Indeed, for tangential densities $\mathbf{f}\in L^2(\R;\bfL^2_T(\GammaI))$,  the non-regularized counterpart of ${\Mop}^i_{\chi}$ is 
\[
  ({\Mop}^i \mathbf{f})(\x,t) = \curl _{\x} \Big( \int_{\R} \int_{\GammaI}
    \Phi(\x-\y,t-\tau)\, \mathbf{f}(\y,\tau) \, dA_{\y} \, d\tau \Big) \qquad \mbox{for } (\x,t) \in (\R^3\setminus\GammaI) \times \R \, ,
\]
which is the curl of the (non-regularized) retarded single layer potential for the wave equation defined over the surface $\GammaI$; see the paragraph \ref{par:SLwave} for more details about this integral operator. Then
\begin{eqnarray*}
  ({\Mop}^i_\chi \mathbf{f})(\x,t)
 &=&  \displaystyle\int_{\R} \chi(t-\tau) \, \curl_{\x} ( \int_{\R} \int_{\GammaI}
      \Phi(\x-\y,\tau-\tau_1) \mathbf{f}(\y,\tau_1) \, dA_{\y} \, d{\tau_1}  ) \, d {\tau} \\
  &=&  (\chi \ast {\Mop}^i \mathbf{f}(\x,\cdot))(t) \qquad\text{for } (\x,t)\in ( \R^3\setminus\GammaI )\times\R \, .
\end{eqnarray*}

The Time Domain Linear Sampling Method (TD-LSM) is an imaging technique that yields a picture of the scatterer by approximately solving, for each sampling point, a linear integral equation {involving the near-field operator and }whose right-hand side is the tangential trace of a point source placed at the point under study. Using the measured scattered field, we can
compute the near-field operator (\ref{NFO}) applied to a vector function $\f\in L^2_{s_0}(\R;\bfL^2(\GammaI))$.
Then, for each sampling point $\z$, polarization $\p_{\z} \in \R^3\setminus\{ \bf0 \}$ and delay $\tau_{\z} \in \R$,  
we seek an approximate solution $\mathbf{g} = \mathbf{g}(\cdot,\cdot;\z,\p_{\z},\tau_{\z}) \in L^2(\R;\bfL^2_T(\GammaI))$ of the \emph{near-field equation}
\begin{equation}
  \label{eq:nearField}
  \big( \mathcal{N}_\chi \mathbf{g}(\cdot,\cdot;\z,\p_{\z},\tau_{\z}) \big)(\x,t)
    =  \mathcal{E}^i_T(\x,t-\tau_{\z};\z,\p_{\z})
    \qquad \text{for } (\x,t) \in  \GammaM \times \R \, .
\end{equation}
This is an ill-posed linear integral equation, {and an approximate solution can  
be obtained by Tikhonov regularization. This problem is the near-field, time domain analogue of  \cite[Eqn. (7.45)]{ColtonKress}.  We shall show that by solving (\ref{eq:nearField})
for $\z$ in a domain a priori known to contain the scatterer, and then using an indicator function
based on norms of this solution, we can obtain an approximation to the shape of the scatterer.  For more details of the LSM approach see Section~\ref{sec:num} and more generally \cite{ccm-book}. }

In our numerical tests,  $\tau_{\z}$ is kept constant for all sampling points $\z$ in a test region that we choose {a priori} to search for the scatterer: Theoretically, one might also let $\tau_{\z}$ vary depending on $\z$, but the assumption that this is not the case allows to   neglect the dependence of $\mathbf{g}$ on  $\tau_{\z}$. Moreover, $\p_{\z}=\p$ is typically fixed to be a unit vector ($|\p |=1$). Note that in their analysis of the TD-LSM for the wave equation \cite{prunty19}, the authors argue that one may choose $\tau_{\z}=\tau=0$.

By solving the near-field equation approximately for many sampling points $\z$ we construct an indicator function for the scatterer.  Details of this
procedure are given in Remark~\ref{ind_rem}.

\subsection{Basic ingredients for the TD-LSM analysis}\label{subsec:tools4LSManalysis}

We next study some basic tools for the theoretical justification of the TD-LSM. More precisely, we start by recalling an integral operator related to the wave equation: the so-called retarded single layer potential. Then we study the operator ${\Mop}^i_{\chi}$ related to the superposition of incident fields, and the impedance trace operator that maps incident fields onto the associated boundary data in the impedance boundary condition (\ref{eq:TDfwd_Gamma}) of the forward problem. These results lay the foundation to deduce some basic properties of the near-field measurement operator $\mathcal{N}_{\chi}$.

\subsubsection{Retarded single layer potential for the wave equation}\label{par:SLwave}
The regularized  retarded single layer potential for the wave equation  defined over $\GammaI$ is
\begin{equation}\label{eq:regsingleLayer}
    \mathcal{S}_{\GammaI ,\chi}  f (\x,t)= 
    \int_{\R} \int_{\GammaI} f(\y,\tau) \Phi_{\chi}(\x-\y,t-\tau) \, dA_{\y} \, d\tau \qquad\text{for } (\x,t)\in\R^3\times\R \, .
  \end{equation}
Notice that it is the regularization by means of the modulation function $\chi$ of the non-regularized retarded single layer potential for the wave equation defined over $\GammaI$:
  \begin{equation}
    \label{eq:singleLayer}
    \mathcal{S}_{\GammaI} f (\x,t) =
    \int_{\R} \int_{\GammaI} f(\y,\tau) \Phi (\x-\y,t-\tau) \, dA_{\y} \, d\tau \qquad\text{for } (\x,t)\in\R^3\times\R\, ,
  \end{equation}
   cf. \cite{Sayas2016}. 
  The latter defines a bounded operator from $L^2_{s_0}(\R;L^2(\GammaI))$
 into $L^2_{s_0}(\R;L^2_{loc}(\R^3))$ that preserves causality. Moreover, the following result provides bounds on the single layer operator.

\begin{lemma}\label{lem:trSLGammaI}
Let  $s_0>0$. Then $\mathcal{S}_{\GammaI}$ is a bounded  operator from $L^2(\R;L^2(\GammaI))$ into $H^{-1}_{s_0}(\R;H^1(\Omega^c))$, and also into $H^{-1}_{s_0}(\R;H^1(\Omega))$. Moreover, its trace on $\Gamma\times\R$ (that is,  $\mathcal{S}_{\GammaI}|_{\Gamma\times\R}$)  is continuous across $\Gamma\times\R$.
\end{lemma}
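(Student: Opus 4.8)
The plan is to prove the mapping properties of $\mathcal{S}_{\GammaI}$ by working in the Fourier-Laplace domain, establishing the required bounds on the transformed single layer potential as a function of $s \in \mathbb{C}_{s_0}$, and then transferring these back to the time domain via Lemma~\ref{lem:lubich}. The continuity of the trace across $\Gamma\times\R$ will then follow from the classical jump relations for the frequency-domain single layer potential. First I would take the Fourier-Laplace transform of \eqref{eq:singleLayer}. Since $\Phi(\x,t)$ is the fundamental solution of the wave equation, its transform is (up to constants) the fundamental solution of the Helmholtz-type equation with complex wave number $k_s = s/c_0$, namely $\Phi_s(\x) = e^{i k_s |\x|}/(4\pi|\x|)$. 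Thus $\mathcal{L}[\mathcal{S}_{\GammaI} f](s)$ becomes the frequency-domain single layer potential $S_{k_s} f_s$ over $\GammaI$, where $f_s = \mathcal{L}[f](s)$.

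The core step is to establish an $s$-explicit bound of the form $\Vert S_{k_s} f_s \Vert_{H^1(\Omega^c)} \leq C\,|s|^{r}\,\Vert f_s \Vert_{L^2(\GammaI)}$ for some fixed exponent $r$, valid for all $s \in \mathbb{C}_{s_0}$ (and analogously into $H^1(\Omega)$). To do this I would use the well-known fact that $\GammaI$ is contained in a smooth closed surface $\tilde{\Gamma}_{\mathrm{I}}$ enclosing $\Omega$, so the single layer potential maps $L^2(\GammaI)$ boundedly into $H^{3/2}$ on a neighboring surface and solves the exterior (and interior) Helmholtz equation with wavenumber $k_s$. The crucial point is to track how the continuity constants degrade as $|s|\to\infty$: the volume $H^1$ norm on $\Omega^c$ picks up polynomial factors of $|s|$ coming from the gradient of the oscillatory kernel and from the Helmholtz equation $\Delta (S_{k_s}f_s) = -k_s^2 S_{k_s}f_s$, which converts $L^2$ control into $H^1$ control at the cost of a factor $|s|^2$. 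One must work on the bounded region between $\GammaI$ and a fixed large ball (where the $H^1(\Omega^c)$ bound is genuinely needed) together with the exponential decay provided by $\Im(k_s)>0$, so that the integral defining the $H^1_{s_0}(\R;H^1)$ norm converges. Once a bound $\Vert S_{k_s}\Vert_{\mathcal{B}(L^2(\GammaI),H^1)} \leq C|s|^r$ is in hand, Lemma~\ref{lem:lubich} with the appropriate shift in the exponent $p$ immediately yields boundedness of $\mathcal{S}_{\GammaI}$ from $L^2(\R;L^2(\GammaI)) = H^0_{s_0}(\R;L^2(\GammaI))$ into $H^{-1}_{s_0}(\R;H^1)$, the value $r=1$ matching the loss of one power recorded in the statement.

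For the trace continuity across $\Gamma\times\R$, I would invoke the classical result that in the frequency domain the single layer potential $S_{k_s}f_s$ is continuous across any surface (the jump lies only in the normal derivative). Since $\Gamma$ is distinct from $\GammaI$ in the generic configuration, one can in fact argue directly that $S_{k_s}f_s$ is smooth near $\Gamma$; but even when the surfaces meet, the continuity of the single layer trace holds. Transferring this pointwise-in-$s$ continuity back through the inverse transform, together with the $H^1$ bounds on both sides giving matching traces in $H^{1/2}(\Gamma)$, shows $\mathcal{S}_{\GammaI}|_{\Gamma\times\R}$ is single-valued, i.e. continuous across $\Gamma\times\R$. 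Causality is preserved throughout by Paley-Wiener theory, as already noted for the untransformed potential.

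The main obstacle I anticipate is obtaining the bound with the \emph{correct} power $r$ of $|s|$ and, in particular, controlling the $H^1(\Omega^c)$ norm on the \emph{unbounded} exterior domain. The oscillatory factor $e^{ik_s|\x|}$ grows without bound in the far field when $\Re(k_s)$ contributes, but the decaying factor $e^{-\Im(k_s)|\x-\y|}$ (with $\Im(k_s) = \Im(s)/c_0 \geq s_0/c_0 > 0$) rescues integrability; making this trade-off quantitative and uniform in $s$, while extracting only a polynomial—rather than exponential—dependence on $|s|$, is the delicate part. I would handle it by splitting $\Omega^c$ into a fixed bounded collar around $\Gamma$ and $\GammaI$, where sharp mapping properties of $S_{k_s}$ are available with explicit $|s|$-dependence, and an exterior far region where the exponential decay dominates and contributes a uniformly bounded remainder.
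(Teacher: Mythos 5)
Your overall skeleton matches the paper's: pass to the Fourier--Laplace domain, obtain an $s$-explicit polynomial bound for the transformed potential, and return to the time domain via Lemma~\ref{lem:lubich}; the trace continuity across $\Gamma$ is also handled correctly (indeed, since $\GammaI$ encloses $\Omega$, the potential is smooth near $\Gamma$, and the paper gets the same conclusion from global $H^1(\R^3)$ regularity). The genuine difference is in how the core bound is obtained, and this is precisely the step you leave as a plan rather than a proof. The paper does not track constants through classical mapping properties of $S_{k_s}$, nor does it split $\Omega^c$ into a collar and a far region. Instead it uses the Bamberger--Ha~Duong device: $\hat w_s=\widehat{\mathcal S}_{\GammaI,s}\hat f_s$ lies in $H^1(\R^3)$ globally and satisfies the transmission problem $\Delta\hat w_s+k_s^2\hat w_s=0$ off $\GammaI$ with normal-derivative jump $\hat f_s$, whence the single variational identity
\begin{equation*}
\int_{\R^3}\Bigl(|\nabla\hat w_s|^2-\frac{s^2}{c_0^2}|\hat w_s|^2\Bigr)\,d\x=\int_{\GammaI}\hat f_s\,\overline{\hat w_s}\,dA\,;
\end{equation*}
multiplying by $-\overline{s}$ and taking imaginary parts produces a coercive left-hand side with constant $\Im(s)\geq s_0$, and Young's inequality plus the trace theorem give $\|\hat w_s\|_{H^1(\R^3)}\leq C|s|\,\|\hat f_s\|_{H^{-1/2}(\GammaI)}$ uniformly in $s\in\C_{s_0}$, in one stroke and with the correct power $r=1$. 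This is exactly the estimate you identify as ``the delicate part''; your proposed route (citing $L^2(\GammaI)\to H^{3/2}$ mapping properties with tracked constants, converting $L^2$ to $H^1$ via the Helmholtz equation at cost $|s|^2$, and balancing oscillation against the decay $e^{-\Im(k_s)|\x-\y|}$ in the far field) is plausible in principle but would likely produce a worse power of $|s|$ than needed and is not carried out. If you replace that paragraph with the multiplier-by-$-\overline{s}$ energy argument on all of $\R^3$ --- which also removes any need to treat the unbounded exterior separately, since the estimate is global --- your proof coincides with the paper's.
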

\begin{remark}
A stronger result can be proved for the regularized single layer~\cite{MarmoratEtal}.
\end{remark}
\begin{proof} The Fourier-Laplace transform $\hat{w}_s$
  of $w=\mathcal{S}_{\GammaI}f$ satisfies a transmission problem in $\R^3\setminus\GammaI$; more precisely, $\Delta\hat{w}_s+k_s^2\hat{w}_s=0$ in $\R^3\setminus\GammaI$, and its trace is continuous across $\GammaI$ whereas
  its normal derivative has a jump  equal to the  density $f$; accordingly, a variational formulation of this transmission problem leads to
  \[
    \int_{\R^3} \Big( |\nabla \hat{w}_s|^2
      - \frac{s^2}{c_0^2}\, |\hat{w}_s|^2 \Big) \, d{\x}
    = \int_{\GammaI} \hat{f}_s \overline{\hat{w}_s} \, dA \, .
  \]
 Taking the imaginary part of the product by $-\ol{s}$, and using Cauchy's generalized inequality and the trace theorem, 
  \[
    \Im (s) \int_{\R^3} \Big( |\nabla \hat{w}_s|^2
      + \frac{|s|^2}{c_0^2} |\hat{w}_s|^2 \Big) \, d{\x}
     \leq |s| \, \| \hat{f}_s \|_{H^{-1/2}(\GammaI)} \| \hat{w}_s \|_{H^{1/2}(\GammaI)}
     \leq \frac{|s|^2 }{4 \alpha}\, \| \hat{f}_s \|^2_{H^{-1/2}(\GammaI)}
      + C_{\GammaI} \alpha\, \| \hat{w}_s \|^2_{H^1(\R^3)} \, ,
  \]
  for any $\alpha>0$. In particular, if $\alpha >0$ is small enough, we deduce that
  \begin{equation}\label{eq:auxSL_FL}
   \| \hat{w}_s \|_{H^1(\R^3)}
    \leq C |s|\, \| \hat{f}_s \|_{H^{-1/2}(\GammaI)} \, ,
  \end{equation}
  where $C>0$ does not depend on $s \in \C_{s_0}$. Now we can infer information back to the time domain using Lemma \ref{lem:lubich}, and it follows that, for any $p\in\mathbb{R}$, the single layer potential $\mathcal{S}_{\GammaI}$ is
  bounded as a map from $H^p_{s_0}(\R;H^{-1/2}(\GammaI))$ into
  $H^{p-1}_{s_0}(\R;H^1(\R^3))$ for any $p\in\R$; in particular, it is bounded from $L^2_{s_0}(\R;L^2(\GammaI))$ into
  $H^{-1}_{s_0}(\R;H^1(\Omega))$ and into $H^{-1}_{s_0}(\R;H^1(\Omega^c))$.
  \end{proof}

\subsubsection{Analysis of the operator associated to the superposition of incident fields}\label{sec:opManalysis}

The aim of this paragraph is to study the operator ${\Mop}^i_\chi$ defined in~\eqref{eq:incPointSource} between suitable Sobolev spaces. To accomplish this, we define the following closed subspace of $H^p_{s_0}(\R;\bfL^2(\Omega))$ associated to incident fields:
\begin{equation}\label{eq:defZOmega}
  Z^p_{s_0}(\Omega) = \left\{ \g \in H^p_{s_0}(\R;\bfL^2(\Omega)) \, \big{|}\,
      \Ddiv \g = 0 \text{ and } \g_{tt} + c_0^2 \curl (\curl \g) = \bf0 \text{ in } \Omega\times\R  \right\} .
\end{equation}
Then we have the following result:
\begin{theorem}
  \label{th:boundM}
  Assume that $\chi \in \mathcal{C}^\infty_0(\R_+)$.
  Then ${\Mop}^i_\chi$ is a bounded and injective operator from $L^2_{s_0}(\R;\bfL^2_T(\GammaI))$ into
  $Z^p_{s_0}(\Omega)$ for any $p \in \R$, and the range of its tangential traces $\{ ({\Mop}^i_\chi\f )_T \,|\,\f\in L^2_{s_0}(\R;\bfL^2_T(\GammaI))\}$ is dense in $H^p_{s_0}(\R;\bfL^2_T(\Gamma))$.
\end{theorem}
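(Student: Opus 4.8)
The plan is to work throughout in the Laplace domain, prove each of the three assertions there with explicit control in $s$, and return to the time domain via Plancherel's identity~\eqref{eq:Plancherel} and Lemma~\ref{lem:lubich}. Everything rests on the factorisation ${\Mop}^i_\chi\f=\chi\ast{\Mop}^i\f$ with ${\Mop}^i\f=\curl_\x\mathcal{S}_{\GammaI}\f$: writing $\hat w_s$ for the Laplace transform of $\mathcal{S}_{\GammaI}\f$ (the componentwise Helmholtz single layer of the transformed density $\hat f_s$) and $\hat u_s=\curl\hat w_s$, the Laplace symbol of ${\Mop}^i_\chi$ is $\hat f_s\mapsto\mathcal{L}[\chi](s)\,\hat u_s$. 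I will use repeatedly that, for $\chi\in\mathcal{C}^\infty_0(\R_+)$, the transform $\mathcal{L}[\chi]$ is entire, not identically zero (hence nonzero off a discrete set), and rapidly decreasing on each line $\Im s=s_0$, i.e. $|\mathcal{L}[\chi](s)|\le C_N(1+|s|)^{-N}$ for every $N$.

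\emph{Boundedness into $Z^p_{s_0}(\Omega)$.} Estimate~\eqref{eq:auxSL_FL} gives, componentwise, $\|\hat w_s\|_{H^1(\R^3)}\le C|s|\,\|\hat f_s\|_{H^{-1/2}(\GammaI)}\le C|s|\,\|\hat f_s\|_{\bfL^2_T(\GammaI)}$, so $\|\hat u_s\|_{\bfL^2(\Omega)}\le\|\hat w_s\|_{H^1(\Omega)}\le C|s|\,\|\hat f_s\|_{\bfL^2_T(\GammaI)}$. Hence the symbol norm from $\bfL^2_T(\GammaI)$ to $\bfL^2(\Omega)$ is at most $C|\mathcal{L}[\chi](s)|\,|s|\le C_p|s|^{-p}$ for any prescribed $p$ (choosing $N$ large); the map $s\mapsto\hat u_s$ being analytic on $\C_{s_0}$, Lemma~\ref{lem:lubich} with $r=-p$ gives boundedness of ${\Mop}^i_\chi$ from $L^2_{s_0}(\R;\bfL^2_T(\GammaI))$ into $H^p_{s_0}(\R;\bfL^2(\Omega))$ for every $p$. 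Membership in $Z^p_{s_0}(\Omega)$ is then structural: $\Ddiv{\Mop}^i_\chi\f=0$ since ${\Mop}^i\f$ is a curl, and because the sources lie on $\GammaI$, disjoint from $\overline\Omega$, the right-hand side of~\eqref{eq:incPointSource} vanishes on $\Omega$, so by superposition ${\Mop}^i_\chi\f$ satisfies $\g_{tt}+c_0^2\curl(\curl\g)=\bf0$ in $\Omega\times\R$.

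\emph{Injectivity.} If ${\Mop}^i_\chi\f=\bf0$ on $\Omega$, then $\mathcal{L}[\chi](s)\hat u_s=\bf0$ in $\Omega$ a.e.; cancelling the a.e.-nonzero scalar and using analyticity in $s$ gives $\hat u_s=\curl\hat w_s=\bf0$ in $\Omega$ for every $s\in\C_{s_0}$. Since $\hat u_s$ is divergence free and solves $\curl\curl\hat u_s-k_s^2\hat u_s=\bf0$ in $\R^3\setminus\GammaI$, unique continuation propagates the vanishing to the component of $\R^3\setminus\GammaI$ containing $\Omega$ (all of $\R^3\setminus\GammaI$ if $\GammaI$ is open, so that $\hat f_s=[\bfnu\times\hat u_s]=\bf0$ at once). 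When $\GammaI$ is closed I would dispatch the exterior component by the energy identity that drives the rest of the proof: the continuity of $\Ddiv\hat w_s$ across $\GammaI$ (it is the single layer of the surface divergence $\Ddiv_{\GammaI}\hat f_s$) makes $\bfnu\times\curl\hat u_s$ continuous there, hence zero from outside, and testing $\curl\curl\hat u_s=k_s^2\hat u_s$ against $\overline{\hat u_s}$ over the exterior gives $k_s^2\!\int|\hat u_s|^2=\int|\curl\hat u_s|^2$ with vanishing boundary and (by exponential decay, $\Im s>0$) radiation contributions. As $k_s^2=s^2/c_0^2$ never lies in $[0,\infty)$ for $s\in\C_{s_0}$, this forces $\hat u_s=\bf0$ outside too; then $\hat f_s=[\bfnu\times\hat u_s]=\bf0$, so $\f=\bf0$.

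\emph{Density.} I would show the annihilator of the range of tangential traces is trivial; after Laplace transform, Plancherel, and cancelling $\mathcal{L}[\chi](s)$, this reduces to the per-frequency claim that $\boldsymbol{\varphi}\in\bfL^2_T(\Gamma)$ with $\int_\Gamma(\curl\hat w_s)_T\cdot\overline{\boldsymbol{\varphi}}=0$ for all densities on $\GammaI$ must vanish (density then holds in $H^p_{s_0}(\R;\bfL^2_T(\Gamma))$ for every $p$, the weight $|s|^{2p}$ being irrelevant to density). Using the symmetry $G_s(\x-\y)=G_s(\y-\x)$ of the Helmholtz fundamental solution and a triple-product identity, the hypothesis rewrites as $(\curl\hat v_s)_T=\bf0$ on $\GammaI$, where $\hat v_s$ is the single layer over $\Gamma$ with density $\overline{\boldsymbol{\varphi}}$. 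Put $\hat E=\curl\hat v_s$, a radiating Maxwell field that is real-analytic off $\Gamma$. Its tangential trace vanishes on the piece $\GammaI$ of the analytic surface $\tilde{\Gamma}_{\mathrm{I}}$, hence by surface-analyticity on all of $\tilde{\Gamma}_{\mathrm{I}}$; the exterior energy identity then gives $\hat E=\bf0$ outside $\tilde{\Gamma}_{\mathrm{I}}$, and analyticity spreads this to the whole exterior of $\Gamma$. Finally the continuity of $\bfnu\times\curl\hat E$ across $\Gamma$ makes this trace vanish from inside, and the energy identity on the bounded domain $\Omega$ forces $\hat E=\bf0$ in $\Omega$ as well; the jump relation $[\bfnu\times\hat E]=\boldsymbol{\varphi}$ gives $\boldsymbol{\varphi}=\bf0$. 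The main obstacle is exactly this density argument — coordinating unique continuation across the analytic surface with the jump relations and keeping straight which trace is known to vanish on which side of $\Gamma$ — and the device that unlocks it is the identity $k_s^2\int|\hat E|^2=\int|\curl\hat E|^2$ combined with $k_s^2\notin[0,\infty)$, applied with only one boundary trace known, thanks to the continuity of $\bfnu\times\curl(\cdot)$ across the source surface.
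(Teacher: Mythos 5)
Your proposal is correct and follows essentially the same route as the paper: the same factorisation through $\chi\ast\curl\mathcal{S}_{\GammaI}$, the single-layer bound (\ref{eq:auxSL_FL}) combined with the rapid decay of $\hat{\chi}_s$ and Lemma~\ref{lem:lubich} for boundedness, and the same vanishing-in-$\Omega$ / unique continuation / continuity of $\bfnu\times\curl(\cdot)$ / exterior uniqueness / jump-relation scheme for injectivity. The only differences are presentational: for density you argue per frequency via the annihilator and the kernel symmetry where the paper computes the time-domain adjoint explicitly and identifies it with the time-reversed operator with sources on $\Gamma$, and for exterior uniqueness you use the Green's identity $k_s^2\int|\hat{u}_s|^2=\int|\curl\hat{u}_s|^2$ with $k_s^2\notin[0,\infty)$ where the paper invokes the coercivity argument from its analysis of the forward problem -- the underlying mathematics is the same in both places.
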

\begin{proof}
  We first study the operator ${\Mop}^i_\chi$ based on explicit bounds in $s\in\C_{s_0}$ of its Fourier-Laplace transform\
  \[
    (\widehat{{\Mop}}^i_{\chi,s} \hat{\f}_s)(\x)
    = \hat{\chi}_s \, \curl_{\x}\Big(
      \int_{\GammaI} \hat{\f}_s(\y) \frac{e^{ic_0^{-1}s |\x-\y|}}{4 \pi \, |\x-\y|} \, dA_{\y}\Big)
      \qquad\mbox{for } \x \in \Omega \, . 
  \]
  Remaining in the Fourier-Laplace domain, we note that the single layer potential for vector fields is defined to act componentwise. Then, the mapping 
  \[
    \hat{\f}_s \mapsto \curl (\widehat{\mathcal{S}}_{\GammaI,s}\hat{\f}_s) 
  \]
  is bounded from $\bfL^2(\GammaI)$ into $\bfL^2(\R^3)$ because from (\ref{eq:auxSL_FL}) we have that
  $$\| \curl (\widehat{\mathcal{S}}_{\GammaI,s}\hat{\f}_s)  \|_{\bfL^2(\R^3)} \leq \sqrt{3}\, \| \widehat{\mathcal{S}}_{\GammaI,s}\hat{\f}_s \|_{\bfH^1(\R^3)}\leq  
    C |s| \,\| \hat{\f}_s \|_{\bfL^2(\GammaI)} \, ,
  $$ 
  where $C>0$ is independent of $s\in\C_{s_0}$.
  Also notice that the compactness of the support of the modulation function $\chi \in \mathcal{C}^\infty_0(\R_+)$ guarantees that its Fourier-Laplace transform
  $\hat{\chi}_s$ decays faster than any algebraic rate
  (see~\cite{Rudin1973}), that is,
  $|\hat{\chi}_s | \leq C_{\chi,p}\, |s|^{-p}$ for $s \in\C_{s_0}$
  and $p >0$. This shows that, for any  $s\in\C_{s_0}$ and $p >0$, we can bound $\widehat{{\Mop}}^i_{\chi,s} \hat{\f}_s=\hat{\chi}_s  \curl (\widehat{\mathcal{S}}_{\GammaI,s}\hat{\f}_s)$ by
  \[
    \| \widehat{{\Mop}}^i_{\chi,s} \hat{\f} \|_{\bfL^2(\R^3)}
    \leq C_{\chi,p}\, |s|^{1-p} \| \hat{\f}_s \|_{\bfL^2(\GammaI)} \, ,
  \]
  where $C_{\chi,p}>0$ does not depend on $s\in\C_{s_0}$. 
  Back to the time domain, it follows that ${\Mop}^i_\chi$ is bounded from $H^{q}_{s_0}(\R;\bfL^2(\GammaI))$
  into $H^{q-1+p}_{s_0}(\R; \bfL^2(\R^3))$ for every $p,q\in\R$. In particular, it is bounded from $L^{2}_{s_0}(\R;\bfL^2_T(\GammaI))$
  into $H^{p}_{s_0}(\R; \bfL^2(\R^3))$ for any $p\in\R$.
  Moreover, the divergence of ${\Mop}^i_\chi \f$ vanishes in $\R^3\times\R$ from its definition, and it is also clear that
  ${\Mop}^i_\chi \f$ is a weak solution to the vector wave equation
  in $ (\R^3\setminus\GammaI) \times \R$. Hence, ${\Mop}^i_\chi$ is also
  bounded from $L^{2}_{s_0}(\R;\bfL^2_T(\GammaI))$
  into $Z^q_{s_0}(\OmegaI)$ for any $p\in\R $, where  the space $Z^q_{s_0}(\OmegaI)$ is defined as in (\ref{eq:defZOmega}).  

  Next we show that ${\Mop}^i_\chi$ is injective. To this end, let us consider   $\f \in H^p_{s_0}(\R;\bfL^2_T(\GammaI))$ such that   ${\Mop}^i_\chi \f$ vanishes in $Z^p_{s_0}(\Omega)$.
  Then $\widehat{{\Mop}}^i_{\chi,s} \hat{\f}_s$ vanishes for almost all complex frequencies $s \in\C_{s_0}$; equivalently, since  $\hat{\chi}_s$ is an entire function, the field
  \[
    \hat{\w}_s (\x) = \curl_{\x}
    \Big(\int_{\GammaI} \hat{\f}_s(\y) \frac{e^{is c_0^{-1} |\x-\y|}}{4 \pi |\x-\y|} \, d A_{\y}\Big) =\curl \hat{\mathcal{S}}_{\GammaI,s}\hat{\f}_s (\x) = \bf0 
  \]
  in $\bfL^2(\Omega)$ for almost every $s\in\C_{s_0}$.
  Moreover, $ \hat{\w}_s\in\bfH^1(\R^3\setminus\GammaI)$ solves the vector Helmholtz equation
  $\Delta \hat{\w}_s+\frac{s^2}{c_0^2} \hat{\w}_s=\bf0$   in $\R^3 \setminus \GammaI$. Thus, the fact that it vanishes in $\Omega$ implies that it also vanishes up to $\GammaI$; notice that, in case $\GammaI$ is open, by analytic continuation $\hat{\w}_s$ vanishes up to the whole $\tilde{\Gamma}_{\mathrm{I}}$ and therefore by this reasoning we may simplify the situation by identifying $\tilde{\Gamma}_{\mathrm{I}}$ with $\GammaI$. 
  Let us recall that $\curl\hat{\w}_s\times\boldsymbol{\nu}$ is continuous across $\GammaI$, cf. \cite[Theorem 6.12]{ColtonKress}\footnote{Th. 6.12 in \cite{ColtonKress} states this continuity result for a smooth surface $\GammaI$ and a density field
  $\hat{\f}_s\in\boldsymbol{\mathcal{C}}^1(\GammaI)$, but the result also holds for a $\mathcal{C}^2$ surface $\GammaI$ and a field $\hat{\f}_s\in\bfL^2(\GammaI)$.}. Hence, if we consider the  field $\hat{\w}_s$ outside of $\GammaI$, it follows that it satisfies the boundary condition $\curl\hat{\w }_s \times\bfnu =\bf0$ on $\GammaI$. Furthermore, $\hat{\w}_s$ is smooth and  solves the Maxwell's equation $\curl\curl \hat{\w }_s- k_s^2 \hat{\bfw }_s=\bf0 $ outside of $\GammaI$, that is, in {$\OmegaI^{c}=\mathbb{R}^3\setminus\overline{\OmegaI}$}. 
  By the uniqueness of solutions to this exterior problem (which can be shown reasoning as we did above Th. \ref{th:FwdProbOK} for the exterior impedance problem  in the Fourier-Laplace domain), we deduce that  $\hat{\w}_s$ also vanishes outside of $\GammaI$. Finally, notice that the jump of $\hat{\w}_s$ across $\GammaI$ is the following,
  see~\cite[Theorem 6.12]{ColtonKress}:
  \[
    \lim_{h\to 0} \left( \hat{\w}_s(\x + h \boldsymbol{\nu})
       - \hat{\w}_s(\x - h \boldsymbol{\nu}) \right)
    = \hat{\f}_s \times\boldsymbol{\nu} \qquad\text{for } \x\in\GammaI \, .
  \]
  Therefore, from $\hat{\w}_s=\bf0$ in $\R^3\setminus\GammaI$ it follows that the tangential field $\hat{\f}_s= \hat{\f}_{s,T}= \boldsymbol{\nu} \times (\hat{\f}_s\times \boldsymbol{\nu} ) $ vanishes on $\GammaI$ for almost every $s\in\C_{s_0}$. Back to the time domain, we conclude that also $\f$ vanishes in $\GammaI\times\R$.

  It remains to show that the tangential trace of ${\Mop}^i_\chi$ has dense range in $L^2_{s_0}(\R;\bfL^2_T(\Gamma ))$. To this end, we study the injectivity of the adjoint of
  \[
    ({\Mop}^i_{\chi,T} \f)(\x,t) = 
       \Big(  \int_{\R}  \int_\R \int_{\GammaI} \chi (t-\tau_1) \,
       \curl_{\x} (\f(\y,\tau)\Phi(\x-\y,\tau_1-\tau)) \, dA_{\y} \, d{\tau} \, d\tau_1 \Big) _{\! T} \quad \mbox{for }(\x,t)\in\Gamma\times\R 
  \]
  with respect to the inner product of $L^2_{s_0}(\R; \bfL^2_T(\Gamma ))$. Notice that, for $\g\in L^2_{s_0}(\R;\bfL^2_T(\Gamma ))$ and $\f\in L^2_{s_0}(\R;\bfL^2_T(\GammaI ))$, we formally have that
   \[
   \begin{array}{l}
    \langle  {\Mop}_{\chi,T}^{i,\ast} \g , \f \rangle _{ L^2(\R;\bfL^2_T(\GammaI))} = 
     \langle  \g ,  {\Mop}_{\chi,T}^{i}\f \rangle _{ L^2(\R;\bfL^2_T(\Gamma))} = 
     \displaystyle\int_{\Gamma} \int_\R \g(\x,t)\cdot \overline{ ({\Mop}^i_{\chi,T} \f)(\x,t)}\, dt \, dA_{\x} \\
     \displaystyle = 
     \displaystyle\int_{\Gamma} \int_\R \g(\x,t)\cdot \overline{ \left(\int_{\R} \int_\R \int_{\GammaI} \chi (t-\tau_1) \,
       \curl_{\x} (\f(\y,\tau)\Phi(\x-\y,\tau_1-\tau)) \, dA_{\y} \, d{\tau} \, d\tau_1\right) }\, dt  \, dA_{\x} \, .
      \end{array}
  \] 
Besides, at almost every $t,\tau,\tau_1\in\R,\, \x\in \Gamma, \y \in\GammaI$ it holds
    \[
       \begin{array}{l}
       \g(\x,t)\cdot \curl_{\x} (\overline{ \f(\y,\tau)\Phi(\x-\y,\tau_1-\tau) }) = - \g(\x,t)\cdot (\overline{  \f(\y,\tau)}\times\nabla_{\y}\Phi(\x-\y,\tau_1-\tau)) \\[1ex]
\hspace*{1cm}       =  \overline{  \f(\y,\tau)} \cdot  \curl_{\y} ( \g(\x,t) \Phi(\x-\y,\tau_1-\tau))
\, .
             \end{array}
  \] 
Hence
  \[
   \begin{array}{l}
    \langle  {\Mop}_{\chi,T}^{i,\ast} \g , \f \rangle _{ L^2(\R;\bfL^2_T(\GammaI))} =      \displaystyle 
     \displaystyle\int_{\GammaI} \int_\R  \overline{  \f(\y,\tau)} \cdot \left( \int_{\R} \int_\R \int_{\Gamma} \chi (t-\tau_1) \curl_{\y} ( \g(\x,t) \Phi(\x-\y,\tau_1-\tau))   \, dA_{\x} \,  dt \, d{\tau_1} \right)\, d\tau\, dA_{\y}   \, ,
      \end{array}
  \] 
so that
  \[
    ({\Mop}_{\chi,T}^{i,\ast} \g)(\y,\tau)  = 
    \int_\R \int_\R \int_{\Gamma} \chi (t-\tau_1)
      \curl_{\y} ( \g(\x,t)\,\Phi(\x-\y,\tau_1-\tau) )\, dA_{\x}\,  dt \, d{\tau_1} 
      \, .
  \]
 This may be rewritten as
 \[
  \begin{array}{l}
    ({\Mop}_{\chi,T}^{i,\ast} \g)(\y,\tau) 
 = 
\displaystyle  \curl_{\y} \Big(   \int_\R \int_\R \int_{\Gamma} \chi (-\tau-\tau_1)
      \g(\x,-t)\,\Phi(\x-\y,\tau_1-t) \, dA_{\x} \, dt \, d{\tau_1} \Big) 
       = 
({\Mop}_{\chi,\Gamma,T}^{i} \tilde{\g})(\y,-\tau) %
\, ,
      \end{array}
  \]
where $\tilde{\g}(\y,\tau)=\g(\y,-\tau)$ and ${\Mop}_{\chi,\Gamma,T}^{i} \g$ denotes the tangential trace on $\GammaI\times\R $ of  
\[
  ({\Mop}^i_{\chi,\Gamma} \g)(\x,t)
  = \int_{\R} \int_{\Gamma}
    \mathcal{E}^i(\x,t-\tau;\y,\g (\y,\tau)) \, dA_{\y } \, d{\tau} \qquad \text{for }\g \in L^2_{s_0}(\R;\bfL^2_T(\Gamma))\, .
\]
The integral operator ${\Mop}_{\chi,\Gamma}^{i}$ represents the linear combination of incident fields taking now the sources over $\Gamma$; accordingly, the injectivity of the tangential trace of ${\Mop}_{\chi,\Gamma}^{i}$  on $\GammaI\times\R$ can be shown in a similar way as that of ${\Mop}_{\chi}^{i}$ on $\Gamma\times\R$. Thus, we conclude that ${\Mop}_{\chi,T}^{i,\ast}$ is injective in $L^2_{s_0}(\R;\bfL^2_T(\Gamma ))$.
\end{proof}

\subsection{Analysis of the impedance trace operator}\label{sec:impedanceTr}

We next study the operator that relates incident fields in $Z^p_{s_0}(\Omega)$ with data in the impedance boundary condition (\ref{eq:TDfwd_Gamma}). More precisely, for any admissible incident field $\g\in Z^p_{s_0}(\Omega)$, the right-hand side of (\ref{eq:TDfwd_Gamma}) is given by $\boldsymbol{\mathcal{G}}_{\g , t}=c_0\curl\g\times\boldsymbol{\nu}-\Lambda \g_{t,T}$, where
$\boldsymbol{\mathcal{G}}_{\g}=-\boldsymbol{\mathcal{H}}_{\g}\times\boldsymbol{\nu}-\Lambda \g_T$ in $\R\times\Gamma$ and $\boldsymbol{\mathcal{H}}_{\g}$ is a causal solution of   $\boldsymbol{\mathcal{H}}_{\g,t}=-c_0\curl\g$ in $\Omega\times\R$. This leads us to define the impedance trace operator $\mathcal{T}_{\Gamma}$ by $\mathcal{T}_{\Gamma}\g=c_0\curl\g\times\boldsymbol{\nu}-\Lambda \g_{t,T}$ for fields $\g\in Z^p_{s_0}(\Omega)$. Note that any field $\g\in Z^p_{s_0}(\Omega)$ belongs to $ H^p_{s_0}(\R;\bfL^2(\Omega)) $, is divergence free and satisfies $\g_{tt} + c_0^2 \curl (\curl \g) = \bf0$ in $\Omega\times\R $; in particular $\g\in  H^{p-2}_{s_0}(\R;\bfH(\curl\curl,\Omega))$, so that the impedance trace operator $\mathcal{T}_{\Gamma}: Z^p_{s_0}(\Omega) \to H^{p-2}_{s_0}(\R;\bfL^2_T(\Gamma)) $ is well-defined and bounded. We need to further understand if $\mathcal{T}_{\Gamma}$ is injective and if it has dense range in $H^{p-2}_{s_0}(\R;\bfL^2_T(\Gamma))$, and we will do so by moving to the Fourier-Laplace domain. Notice that, formally, the Fourier-Laplace transform of  $\mathcal{T}_{\Gamma}\g$ is  $\hat{\mathcal{T}}_{\Gamma ,s}\hat{\g}_s=c_0\curl\hat{\g}_s\times\boldsymbol{\nu}+is\Lambda \hat{\g}_{s,T}$. 

In order to study the injectivity of $\mathcal{T}_{\Gamma}$, the natural approach is to consider some $\g\in Z^p_{s_0}(\Omega)$ such that $c_0\curl\g\times\boldsymbol{\nu}-\Lambda \g_{t,T}=\bf0$ on $\R\times\Gamma$ and handle this problem as we did for the forward problem; 
however, doing so we cannot deduce coercivity properties because of the signs that we get when working in the interior region $\Omega$. 
To overcome this difficulty, we notice that to analyze the near-field operator we can restrict the impedance trace operator to act on fields in the range of ${\Mop}^i_{\chi}$. Hence, it is enough to study the impedance trace as an operator from ${\Mop}^i _{\chi}(L^2_{s_0}(\R;\bfL^2_T(\GammaI)))\subseteq Z^p_{s_0}(\OmegaI)$ into $H^{p-1}_{s_0}(\R;\bfL^2_T(\Gamma ))$. Accordingly, we take a field $\g\in Z_{s_0}^p(\OmegaI)$ such that
$$
c_0\curl\g\times\boldsymbol{\nu}-\Lambda \g_{t,T}=\bf0 \text{ on } \Gamma \times\R \, .
$$
Then for almost all $s\in\C_{s_0}$, its Fourier-Laplace transform $\hat{\g}_s$ is divergence free and satisfies $-s^2 \hat{\g}_s + c_0^2 \curl \curl \hat{\g}_s = \bf0$ in $\OmegaI$, and the condition on the impedance trace translates into
$c_0\curl\hat{\g}_s\times\boldsymbol{\nu}+is\Lambda \hat{\g}_{s,T}=\bf0$ on $\Gamma$. Integrating by parts once in ${\OmegaI^{aux}}=\Omega^c\cap \OmegaI$, we have
\begin{equation}\label{eq:intxpartsOmegaIc}
\| \curl\hat{\bfg}_s\|^2_{\bfL^2({\OmegaI^{aux}})}  - k_s^2\, \| \hat{\bfg}_{s} \|^2_{\bfL^2({\OmegaI^{aux}})}  - ik_s \int_{\Gamma}\Lambda\hat{\bfg}_{s,T}  \cdot \overline{\hat{\bfg}}_{s,T} \, dA +  \int_{\GammaI}\boldsymbol{\nu}\times\curl\hat{\bfg}_{s}  \cdot \overline{\hat{\bfg}}_{s,T} \, dA=0 \, .
\end{equation}
Multiplying both hands of (\ref{eq:intxpartsOmegaIc}) by $\overline{s}$ and identifying the imaginary parts,  
\begin{equation*}
\begin{array}{l}
\displaystyle - \Im (s) \left(
\| \curl\hat{\bfg}_s\|^2_{\bfL^2({\OmegaI^{aux}})}  + |k_s|^2\, \| \hat{\bfg}_{s} \|^2_{\bfL^2({\OmegaI^{aux}})}  \right) 
+ c_0\, |k_s| \int_{\Gamma}\Lambda\hat{\bfg}_{s,T}  \cdot \overline{\hat{\bfg}}_{s,T} \, dA\\
\qquad + \, \displaystyle\Im \left(\overline{s} \int_{\GammaI}\boldsymbol{\nu}\times\curl\hat{\bfg}_{s}  \cdot \overline{\hat{\bfg}}_{s,T} \, dA \right) =0 \, .
\end{array}
\end{equation*}
Similarly, we integrate by parts in $\OmegaI$, multiply both hands by $\overline{s}$ and identify the imaginary parts, so that
\begin{equation*}
\displaystyle - \Im (s) \left(
\| \curl\hat{\bfg}_s\|^2_{\bfL^2(\OmegaI)}  + |k_s|^2\, \| \hat{\bfg}_{s} \|^2_{\bfL^2(\OmegaI)}  \right) 
+ \Im \left(\overline{s} \int_{\GammaI}\boldsymbol{\nu}\times\curl\hat{\bfg}_{s}  \cdot \overline{\hat{\bfg}}_{s,T} \, dA \right) =0 \, .
\end{equation*}
Hence, 
\begin{equation*}
\begin{array}{l}
\displaystyle 0\leq c_0\, |k_s| \int_{\Gamma}\Lambda\hat{\bfg}_{s,T}  \cdot \overline{\hat{\bfg}}_{s,T} \, dA = \\
\displaystyle \hspace*{1cm} = \Im (s) \, \big(\| \curl\hat{\bfg}_s\|^2_{\bfL^2({\OmegaI^{aux}} )}  + |k_s|^2\, \| \hat{\bfg}_{s} \|^2_{\bfL^2({\OmegaI^{aux}})} \big) - \Im \Big(\overline{s} \int_{\GammaI}\boldsymbol{\nu}\times\curl \hat{\bfg}_{s}  \cdot \overline{\hat{\bfg}}_{s,T} \, dA \Big) =\\
\displaystyle \hspace*{1cm} = \Im (s)\, \big(\| \curl\hat{\bfg}_s\|^2_{\bfL^2({\OmegaI^{aux}} )}  + |k_s|^2\, \| \hat{\bfg}_{s} \|^2_{\bfL^2({\OmegaI^{aux}})} \big) - \Im (s) \, \big(
\| \curl\hat{\bfg}_s\|^2_{\bfL^2(\OmegaI)}  + |k_s|^2\, \| \hat{\bfg}_{s} \|^2_{\bfL^2(\OmegaI)}  \big)  =\\[1ex]
\displaystyle \hspace*{1cm} = - \Im (s) \left(
\| \curl\hat{\bfg}_s\|^2_{\bfL^2(\Omega)}  + |k_s|^2\, \| \hat{\bfg}_{s} \|^2_{\bfL^2(\Omega )}  \right)\leq 0 \, .
\end{array}
\end{equation*}
Thus $\hat{\g}_s=\bf0$ in $\Omega$ and, by the unique continuation principle (notice that $ -\frac{s^2}{c_0^2} \hat{\g}_s -\Delta \hat{\g}_s = \bf0$ in $ \OmegaI $), also in $\Omega_{\mathrm{I}}$; moving back to the time domain, we conclude that $\g=\bf0$ in $\OmegaI\times \R$.

Let us next prove that the range of the impedance trace is dense in $H^{p-2}_{s_0}(\R;\bfL^2_T(\Gamma))$. Given any $\boldsymbol{\mathcal{G}}$ in $H^p_{s_0}(\mathbb{R};\bfL^2_T(\Gamma ))$, the well-posedness of the forward exterior problem in the time domain (see Lem. \ref{th:FwdProbOK}) allows us to consider $\boldsymbol{\mathcal{W}}\in H^p_{s_0}(\mathbb{R};\bfL^2(\Omega^c))\cap H^{p-1}_{s_0}(\mathbb{R};\bfH(\curl,\Omega^c))$ that solves
\begin{eqnarray*}
\boldsymbol{\mathcal{W}}_{tt}+c_0^2\curl \curl\boldsymbol{\mathcal{W}} \, =\, \bf0  \qquad & \mbox{in } \Omega^c \times \mathbb{R} \, , \\
(\curl\boldsymbol{\mathcal{W}} )\times\bfnu- c_0^{-1} \Lambda\boldsymbol{\mathcal{W}}_{t,T} \, =\, c_0 ^{-1}\boldsymbol{\calG } \qquad & \mbox{on }\Gamma \times \mathbb{R}\, .
\end{eqnarray*}
Notice that this field satisfies  ${\mathcal{T}}_{\Gamma}\boldsymbol{\mathcal{W}}=\boldsymbol{\mathcal{G}}$. Besides, Th. \ref{th:boundM} guarantees that there is a sequence $(\f_k )_{k\in\mathbb{N}}\subset L^2_{s_0}(\R;\bfL^2_T(\GammaI))$ such that $({\Mop}^i_{\chi}\f_k)_T\to\boldsymbol{\mathcal{W}}_T$ in $H^{p-1}_{s_0}(\R;\bfL^2_T(\Gamma))$. Recalling here again the well-posedness of the exterior problem, we have that $ {\Mop}_{\chi}\f_k \to \boldsymbol{\mathcal{W}} $ in $H^{p-2}_{s_0}(\mathbb{R};\bfH(\curl,\Omega^c)) $; in particular, thanks to the continuity of tangential traces, it follows that ${\mathcal{T}}_{\Gamma} {\Mop}_{\chi}\f_k 
\to {\mathcal{T}}_{\Gamma}\boldsymbol{\mathcal{W}}\, =\, \boldsymbol{\calG} $ in $H^{p-3}_{s_0}(\R;\bfL^2_T(\GammaI)) $ and we conclude that the range of ${\mathcal{T}}_{\Gamma}$ when applied to ${\Mop}^i_{\chi}(L^2_{s_0}(\R;\bfL^2_T(\GammaI)))$ is dense in $H^{p-3}_{s_0}(\mathbb{R};\bfL^2_T(\Gamma))$.

Summing up, we have shown the following result.
\begin{theorem}  \label{th:boundT}
For any $p\in\R$, the operator ${\mathcal{T}}_{\Gamma}: Z^p_{s_0}(\Omega) \to H^{p-2}_{s_0}(\R;\bfL^2_T(\Gamma)) $ is well-defined and bounded. Moreover, the composition ${\mathcal{T}}_{\Gamma}\circ{\Mop}^i_{\chi}: L^2_{s_0}(\R;\bfL^2_T(\GammaI)) \to H^{p}_{s_0}(\mathbb{R};\bfL^2_T(\Gamma))$ is injective and has dense range.
\end{theorem}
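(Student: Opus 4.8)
The plan is to prove the three assertions---boundedness, injectivity of the composition, and density of its range---in turn, passing to the Fourier--Laplace domain for the latter two and returning to the time domain via Lemma~\ref{lem:lubich} together with Plancherel's identity. For boundedness of $\mathcal{T}_{\Gamma}$ itself, I would first note that any $\g\in Z^p_{s_0}(\Omega)$ obeys $\g_{tt}=-c_0^2\,\curl(\curl\g)$, so $\curl\curl\g\in H^{p-2}_{s_0}(\R;\bfL^2(\Omega))$ and hence $\g\in H^{p-2}_{s_0}(\R;\bfH(\curl\curl,\Omega))$. At this regularity both constituents of $\mathcal{T}_{\Gamma}\g=c_0\,\curl\g\times\bfnu-\Lambda\g_{t,T}$ are controlled: the magnetic trace $\curl\g\times\bfnu$ by the standard trace estimate for $\bfH(\curl\curl,\Omega)$, and the tangential term by that estimate together with the uniform boundedness of $\Lambda$, giving a bounded map into $H^{p-2}_{s_0}(\R;\bfL^2_T(\Gamma))$. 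Because the compact support of $\chi$ renders $\Mop^i_\chi$ infinitely smoothing in time (Theorem~\ref{th:boundM} gives boundedness into $Z^{p'}_{s_0}(\Omega)$ for \emph{every} $p'$), this two-order loss is absorbed, so $\mathcal{T}_{\Gamma}\circ\Mop^i_\chi$ maps $L^2_{s_0}(\R;\bfL^2_T(\GammaI))$ boundedly into $H^p_{s_0}(\R;\bfL^2_T(\Gamma))$ for each $p$.

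The crux, and the main obstacle, is injectivity, since integrating by parts directly in $\Omega$ produces volume and boundary contributions whose signs do \emph{not} combine into a coercive estimate. Since $\Mop^i_\chi$ is injective by Theorem~\ref{th:boundM}, it suffices to show $\mathcal{T}_{\Gamma}$ is injective on its range, i.e. on $\g\in Z^p_{s_0}(\OmegaI)$ with $c_0\,\curl\g\times\bfnu-\Lambda\g_{t,T}=\bf0$ on $\Gamma\times\R$. The device I would exploit is that such an incident field extends as a genuine solution across $\Gamma$, so $\hat\g_s$ solves $-s^2\hat\g_s+c_0^2\curl\curl\hat\g_s=\bf0$ throughout $\OmegaI$. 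I would then integrate by parts over two regions: once over the gap $\OmegaI^c=\Omega^c\cap\OmegaI$, where the impedance condition converts the $\Gamma$-boundary term into $-ik_s\int_{\Gamma}\Lambda\hat\g_{s,T}\cdot\overline{\hat\g}_{s,T}\,dA$, and once over all of $\OmegaI$, in which $\Gamma$ is interior and only the $\GammaI$-boundary term survives. Multiplying each identity by $\overline{s}$, taking imaginary parts, and \emph{subtracting} cancels the common uncontrolled $\GammaI$-term; using $\|\cdot\|^2_{\OmegaI}=\|\cdot\|^2_{\Omega}+\|\cdot\|^2_{\OmegaI^c}$ this leaves
\[ 0\le c_0\,|k_s|\int_{\Gamma}\Lambda\hat\g_{s,T}\cdot\overline{\hat\g}_{s,T}\,dA = -\,\Im(s)\big(\|\curl\hat\g_s\|^2_{\bfL^2(\Omega)}+|k_s|^2\|\hat\g_s\|^2_{\bfL^2(\Omega)}\big)\le 0 . \]
Since $\Im(s)=s_0>0$ and $\Lambda$ is positive on tangential fields, both ends vanish, forcing $\hat\g_s=\bf0$ in $\Omega$; unique continuation for $-\tfrac{s^2}{c_0^2}\hat\g_s-\Delta\hat\g_s=\bf0$ in $\OmegaI$ propagates this to all of $\OmegaI$, and Plancherel returns $\g=\bf0$ in $\OmegaI\times\R$. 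In particular $\Mop^i_\chi\f=\bf0$ in $\Omega$, whence $\f=\bf0$ by Theorem~\ref{th:boundM}.

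For density I would realize an arbitrary target as an impedance trace of an exterior field. Given $\boldsymbol{\mathcal{G}}\in H^p_{s_0}(\R;\bfL^2_T(\Gamma))$, well-posedness of the exterior impedance problem (Lemma~\ref{th:FwdProbOK}) provides a field $\boldsymbol{\mathcal{W}}$ with $\mathcal{T}_{\Gamma}\boldsymbol{\mathcal{W}}=\boldsymbol{\mathcal{G}}$, and Theorem~\ref{th:boundM} supplies $\f_k$ with $(\Mop^i_\chi\f_k)_T\to\boldsymbol{\mathcal{W}}_T$ in the tangential-trace norm. Feeding these traces through the continuous exterior solution operator promotes trace convergence to convergence of the associated exterior fields in $H^{p-2}_{s_0}(\R;\bfH(\curl,\Omega^c))$, and continuity of the tangential traces then yields $\mathcal{T}_{\Gamma}\Mop^i_\chi\f_k\to\boldsymbol{\mathcal{G}}$. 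As $p$ ranges over all of $\R$, the intervening finite losses of Sobolev index are immaterial, and density of the range in $H^p_{s_0}(\R;\bfL^2_T(\Gamma))$ follows. The only genuinely delicate point is the injectivity step, and within it the subtraction that removes the uncontrolled $\GammaI$-contribution and leaves a sign-definite identity on $\Omega$.
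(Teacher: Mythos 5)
Your proposal is correct and follows the paper's own proof essentially step for step: the same $\bfH(\curl\curl,\Omega)$ regularity-and-trace argument for boundedness, the same two-region integration by parts over $\OmegaI^c$ and $\OmegaI$ in the Fourier--Laplace domain with the subtraction that cancels the uncontrolled $\GammaI$ term (yielding the identical sign-definite identity on $\Omega$, followed by unique continuation) for injectivity, and the same combination of exterior solvability with the density statement of Theorem~\ref{th:boundM} for the range. No substantive differences to report.
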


\subsection{Analysis of the operator associated to data in the LSM}\label{sec:LSManalysis}

In this paragraph we analyze the Linear Sampling Method (LSM) presented at the beginning of Section~\ref{sec:LSM}.
Recall that we have proposed to define the indicator function of the LSM based on solutions of the near-field equation
\begin{equation}
  \big( \mathcal{N}_\chi \mathbf{g}(\cdot,\cdot;\z,\p,\tau) \big)(\x,t)
    =\mathcal{E}^i_T(\x,t-\tau;\z,\p)
    \qquad \mbox{for }  (\x,t) \in  \GammaM \times \R \, ,\label{NFE_2}
\end{equation}
see \eqref{eq:nearField}. In the definition of the near-field operator for $\f\in L^2_{s_0}(\R;\bfL^2(\GammaI))$,
\[
 (\mathcal{N}_\chi \mathbf{f})(\x,t) \, =\, \int_{\R} \Big(\int_{\GammaI}
    \mathcal{E}_T(\x,t-\tau;\y,\mathbf{f}(\y,\tau))\, dA_{\y} \Big) \,  d\tau   \qquad \mbox{for } (\x,t) \in \GammaM \times \R \, ,
\]
we take the impedance trace of the incident field ${\Mop}^i_{\chi}\f$ and build the associated solution $\mathcal{Q} (\mathcal{T}_{\Gamma}{\Mop}^i_{\chi}\f)$ in $\Omega^c\times\R$, from which we measure its tangent trace on $\GammaM\times\R$. That is,
$$
\mathcal{N}_{\chi}\f = (\mathcal{Q} {\mathcal{T}}_{\Gamma} {\Mop}^i_{\chi}\f )_T \quad\text{on }\GammaM\times\R \, . 
$$
Notice that the tangential trace  is well-defined and onto when understood from $H^{p-2}_{s_0}(\R;\bfH(\curl,\Omega^c))$ into $H^{p-2}_{s_0}(\R;\bfH^{-1/2}_T(\curl_{\Gamma},\GammaM))$, where $\curl_{\Gamma}$ represents the surface curl on $\GammaM$.
Then the following exterior problem in $\R^3\setminus\ol{\OmegaM}$ with a tangential condition on $\GammaM$ is well-posed and preserves causality:
\begin{eqnarray*}
\bfu_{tt}+\frac{c_0}{\mu_0}\curl\curl\bfu = \bf0&\qquad \text{in }(\R^3\setminus\ol{\OmegaM})\times\R \, ,\\
\bfu_T = \bfg &\text{on }\GammaM\times\R\, ,
\end{eqnarray*}
where we seek $\bfu\in H^{p-2}_{s_0}(\R;\bfH (\curl,\R^3\setminus\ol{\OmegaM})) $ for any given $\bfg\in H^{p-2}_{s_0}(\R; \bfH^{-1/2}_T(\curl_{\Gamma},\GammaM))$. Hence, by Th. \ref{th:boundT} and Lem. \ref{th:FwdProbOK} it is straightforward that the near-field operator satisfies the following property.
\begin{proposition}\label{prop:N1-1denserange}
The near-field operator 
$\mathcal{N}_{\chi}$ is well-defined from $L^2_{s_0}(\R;\bfL^2_T(\GammaI))$ into $H^{p}_{s_0}(\R;\bfH^{-1/2}_T (\GammaM))$ for any $p\in\R$, and it is one-to-one with dense range. 
\end{proposition}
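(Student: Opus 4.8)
The plan is to deduce all three properties from the factorization $\mathcal{N}_{\chi}=R\circ\mathcal{Q}\circ(\mathcal{T}_{\Gamma}\circ{\Mop}^i_{\chi})$ established just above, where $R$ denotes the tangential trace onto $\GammaM$. Boundedness and well-definedness are immediate: $\mathcal{T}_{\Gamma}\circ{\Mop}^i_{\chi}$ is bounded into $H^{p}_{s_0}(\R;\bfL^2_T(\Gamma))$ for every $p$ by Theorems \ref{th:boundM} and \ref{th:boundT}, $\mathcal{Q}$ is bounded by Lemma \ref{th:FwdProbOK}, and $R$ is bounded from $H^{p}_{s_0}(\R;\bfH(\curl,\Omega^c))$ into $H^{p}_{s_0}(\R;\bfH^{-1/2}_T(\curl,\GammaM))\hookrightarrow H^{p}_{s_0}(\R;\bfH^{-1/2}_T(\GammaM))$ as noted before the statement. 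Because every factor is bounded for all indices, the intermediate exponents may be shifted freely and the composition lands in $H^{p}_{s_0}(\R;\bfH^{-1/2}_T(\GammaM))$ for each $p\in\R$.

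For injectivity, assume $\mathcal{N}_{\chi}\f=\bf0$ and put $\calE=\mathcal{Q}(\mathcal{T}_{\Gamma}{\Mop}^i_{\chi}\f)$, so $\calE_T=\bf0$ on $\GammaM\times\R$. In $\R^3\setminus\ol{\OmegaM}$ the field $\calE$ solves the homogeneous exterior problem with vanishing tangential trace on $\GammaM$, whose only causal solution is trivial by the well-posedness recalled before the statement; hence $\calE=\bf0$ there. Passing to the Fourier--Laplace domain, $\bfE_s$ solves the elliptic system (\ref{eq_FL_1}) in the connected set $\Omega^c$ and vanishes on the open set exterior to $\GammaM$, so unique continuation forces $\bfE_s=\bf0$, and therefore $\calE=\bf0$, throughout $\Omega^c$. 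The impedance condition (\ref{eq:TDfwd_Gamma}) then gives $\mathcal{T}_{\Gamma}{\Mop}^i_{\chi}\f=\bf0$ (so that $\mathcal{Q}$ is injective), and the injectivity of $\mathcal{T}_{\Gamma}\circ{\Mop}^i_{\chi}$ from Theorem \ref{th:boundT} yields $\f=\bf0$.

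For the dense range I would first reduce to the solution-trace map alone. Since $\mathcal{T}_{\Gamma}\circ{\Mop}^i_{\chi}$ has dense range in $H^{p}_{s_0}(\R;\bfL^2_T(\Gamma))$ (Theorem \ref{th:boundT}) and $R\circ\mathcal{Q}$ is continuous, a routine closure argument gives $\ol{\mathrm{ran}\,\mathcal{N}_{\chi}}=\ol{\mathrm{ran}\,(R\circ\mathcal{Q})}$; hence it suffices to show that the map sending an impedance datum $\boldsymbol{\mathcal{G}}_t$ to the tangential trace on $\GammaM$ of $\calE=\mathcal{Q}\boldsymbol{\mathcal{G}}_t$ has dense range. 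I would prove this by showing that the adjoint $(R\circ\mathcal{Q})^{\ast}$ is injective. An element $\boldsymbol{\psi}$ annihilating the range corresponds, in the Fourier--Laplace domain, to a dual field $\mathbf{W}_s$ solving the adjoint Maxwell system in $\Omega^c$ with a source carried by $\GammaM$ and a homogeneous (adjoint) impedance condition on $\Gamma$; the vanishing of $(R\circ\mathcal{Q})^{\ast}\boldsymbol{\psi}$ forces $\mathbf{W}_{s,T}=\bf0$ on $\Gamma$, and then the homogeneous impedance condition also gives $(\curl\mathbf{W}_s)\times\bfnu=\bf0$ there. With full Cauchy data vanishing on $\Gamma$, unique continuation yields $\mathbf{W}_s=\bf0$ in the shell $\OmegaM\setminus\ol{\Omega}$, and the jump relations across $\GammaM$ then force $\boldsymbol{\psi}=\bf0$.

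I expect the construction and control of this adjoint problem to be the main obstacle. One must identify the adjoint impedance condition correctly, using the symmetry of $\Lambda$, $\epsilon_r$ and $\mu_r$ together with a time reversal $t\mapsto-t$, exactly as in the computation of $\mathcal{T}_{\Gamma}$ and of ${\Mop}_{\chi,T}^{i,\ast}$ in the proof of Theorem \ref{th:boundM}, and one must match the various $H^{p}_{s_0}$ regularity scales on $\Gamma$ and $\GammaM$ so that the Cauchy-data and unique-continuation steps are licit. Proceeding instead through $\mathcal{N}_{\chi}^{\ast}=(\mathcal{T}_{\Gamma}\circ{\Mop}^i_{\chi})^{\ast}\circ(R\circ\mathcal{Q})^{\ast}$ leads to the same point, since the dense range of $\mathcal{T}_{\Gamma}\circ{\Mop}^i_{\chi}$ already makes its adjoint injective.
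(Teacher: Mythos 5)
Your factorization $\mathcal{N}_{\chi}=R\circ\mathcal{Q}\circ(\mathcal{T}_{\Gamma}\circ{\Mop}^i_{\chi})$ is exactly the one the paper works with, and your treatment of well-definedness and of injectivity is correct; indeed it supplies the details that the paper compresses into ``it is straightforward''. The chain you give --- $\calE_T=\bf0$ on $\GammaM\times\R$, hence $\calE=\bf0$ in $\R^3\setminus\ol{\OmegaM}$ by uniqueness for the auxiliary exterior problem with prescribed tangential trace, hence $\calE=\bf0$ in all of $\Omega^c$ by unique continuation in the Fourier--Laplace domain, hence the impedance datum vanishes by (\ref{eq:TDfwd_Gamma}), hence $\f=\bf0$ by the injectivity in Theorem~\ref{th:boundT} --- is the intended argument, and every link is available from Lemma~\ref{th:FwdProbOK}, Theorem~\ref{th:boundT} and the exterior problem introduced just before the proposition.

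The dense-range part is where you depart from the paper and where your proposal is not yet a proof. The paper introduces no adjoint: it combines the dense range of $\mathcal{T}_{\Gamma}\circ{\Mop}^i_{\chi}$ (Theorem~\ref{th:boundT}) with the surjectivity of the tangential trace operator $R$ onto $H^{p-2}_{s_0}(\R;\bfH^{-1/2}_T(\curl,\GammaM))$, the latter obtained by solving the same auxiliary exterior problem in $(\R^3\setminus\ol{\OmegaM})\times\R$, now used in the forward direction rather than for uniqueness. Your alternative --- reducing to dense range of $R\circ\mathcal{Q}$ (a correct reduction, identical to the paper's) and then proving injectivity of $(R\circ\mathcal{Q})^{\ast}$ via a dual impedance problem with a surface source on $\GammaM$ --- is a legitimate and standard strategy, but the steps you yourself flag as ``the main obstacle'' (identifying the adjoint boundary condition in the weighted $H^{p}_{s_0}$ scales, establishing the jump relations for the dual field across $\GammaM$, and the unique-continuation step from vanishing Cauchy data on $\Gamma$) constitute essentially the whole content of that route, and you leave them unexecuted. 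As written, the dense-range claim is a plan rather than an argument. In fairness, the paper's own justification is terse as well --- surjectivity of $R$ on all of $\bfH(\curl,\Omega^c)$ does not by itself yield density of $R$ restricted to the range of $\mathcal{Q}$, so a Runge-type approximation or duality step is being suppressed there too --- but the comparison stands: the paper's route avoids the adjoint entirely, while your route requires a construction you have not carried out.
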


The following result, which is the main theoretical contribution of our paper, gives a partial justification for the use of the near-field equation to solve the inverse scattering problem under the assumption 
that the target is in $\OmegaM$ (which is assumed to satisfy that $\OmegaM\subseteq\OmegaI$, i.e. the receivers are not further from the target than the sources).

\begin{theorem}\label{main_one} {Assume that $\Omega\subset \OmegaM\subseteq\OmegaI$.}
  Let $s_0>0$, $\tau\in\R$ and $\p\in\R^3$ with $|\p |=1$.
  \begin{description}
      \item[Case 1.] Let $\z\in\Omega$. There is a sequence $\g_n\equiv \g_{n}(\cdot,\cdot;\z,\p) \in L^2_{s_0}(\R;\bfL^2_{T}(\GammaI))$ such that $\mathcal{N}_{\chi}\g_n\to \mathcal{E}^i_T(\cdot,\cdot-\tau;\z,\p)$ in $H^{-1}_{s_0}(\R,\bfH^{-1/2}_T(\GammaM))$  when $n\to\infty $ and such that ${\Mop}^i_{\chi}\g_n\in Z^p_{s_0}(\Omega)$ is bounded. 
      \item[Case 2.]  Let $\z\in{\OmegaM^{aux}=\Omega^c\cap \OmegaM}$, and consider any sequence $\g_n\equiv \g_{n}(\cdot,\cdot;\z,\p) \in L^2_{s_0}(\R;\bfL^2_{T}(\GammaI))$ such that $\mathcal{N}_{\chi}\g_n\to \mathcal{E}^i_T(\cdot,\cdot-\tau;\z,\p)$ in $H^{p}_{s_0}(\R,\bfH^{-1/2}_T(\GammaM))$  when $n\to\infty $. Then such sequence cannot be bounded in $L^2_{s_0}(\R;\bfL^2_{T}(\GammaI))$.
  \end{description}
  These sequences can be built by solving (\ref{eq:nearField}) with a Tikhonov regularization, that is, by minimizing
  $$
  \| \mathcal{N}_\chi \mathbf{g}_n(\cdot,\cdot;\z,\p,\tau) - \mathcal{E}^i_T(\x,t-\tau;\z,\p)\| ^2_{H^{-1}_{s_0}(\R; \bfH_T^{-1/2}(\GammaM))} + \frac{1}{n} \| \mathbf{g}_n(\cdot,\cdot;\z,\p,\tau) \|^2 _{L^2_{s_0}(\R; \bfL_T^{2}(\GammaI))}\, .
  $$
\end{theorem}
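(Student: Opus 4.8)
The plan is to exploit the factorization $\mathcal{N}_\chi=B\circ\mathcal{Q}\circ\mathcal{T}_\Gamma\circ{\Mop}^i_\chi$ recorded before Proposition~\ref{prop:N1-1denserange}, where $B:=(\cdot)_T|_{\GammaM}$, together with the following geometric dictionary. Writing $\boldsymbol{u}_{\z}:=\mathcal{E}^i(\cdot,\cdot-\tau;\z,\p)$ for the delayed point source, I first observe that for any interior field $\bfw\in Z^p_{s_0}(\Omega)$, the identity $(\mathcal{Q}\mathcal{T}_\Gamma\bfw)_T=\boldsymbol{u}_{\z,T}$ on $\GammaM$ forces $\mathcal{Q}\mathcal{T}_\Gamma\bfw=\boldsymbol{u}_{\z}$ throughout $\Omega^c$: the difference solves the homogeneous exterior Maxwell system outside $\GammaM$ with vanishing tangential trace on $\GammaM$ and is causal, so by the well-posedness of the exterior problem stated before Proposition~\ref{prop:N1-1denserange} it vanishes in $\R^3\setminus\overline{\OmegaM}$, and unique continuation for the Laplace-domain Helmholtz system (valid since $\Omega^c$ is connected) propagates this to all of $\Omega^c$. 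Taking impedance traces, the boundary match on $\GammaM$ is therefore \emph{equivalent} to $\mathcal{T}_\Gamma\bfw=\boldsymbol{\mathcal{G}}_{\z}$, where $\boldsymbol{\mathcal{G}}_{\z}$ is the impedance datum on $\Gamma$ associated to $\boldsymbol{u}_{\z}$. This reduces the whole theorem to a range/approximation question for $\mathcal{T}_\Gamma$ on the range of ${\Mop}^i_\chi$, the decisive point being that $\boldsymbol{\mathcal{G}}_{\z}$ is a genuine regular trace exactly when $\z\in\Omega$, whereas $\boldsymbol{u}_{\z}$ is singular at $\z$ when $\z\in\Omega^c\cap\OmegaM$.

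I would treat Case~2 first, as the softer implication. Assuming $\z\in\Omega^c\cap\OmegaM$ and that some sequence $\g_n$ with $\mathcal{N}_\chi\g_n\to\boldsymbol{u}_{\z,T}$ is bounded in $L^2_{s_0}(\R;\bfL^2_T(\GammaI))$, the boundedness of ${\Mop}^i_\chi$ (Theorem~\ref{th:boundM}) yields a weakly convergent subsequence ${\Mop}^i_\chi\g_n\rightharpoonup\bfw$ in the Hilbert space $Z^p_{s_0}(\Omega)$. Since $B\mathcal{Q}\mathcal{T}_\Gamma$ is bounded, hence weakly continuous, passing to the limit gives $(\mathcal{Q}\mathcal{T}_\Gamma\bfw)_T=\boldsymbol{u}_{\z,T}$ on $\GammaM$, so by the dictionary $\mathcal{Q}\mathcal{T}_\Gamma\bfw=\boldsymbol{u}_{\z}$ in $\Omega^c$. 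But $\mathcal{Q}\mathcal{T}_\Gamma\bfw$ is a bona fide exterior solution and is therefore smooth near $\z\in\Omega^c\cap\OmegaM$, while $\boldsymbol{u}_{\z}$ carries a non-integrable dipole singularity at $\z$ — a contradiction. Hence no bounded sequence can exist.

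For Case~1, with $\z\in\Omega$, the target $\boldsymbol{\mathcal{G}}_{\z}$ is the impedance trace of a field solving Maxwell's equations in a full neighbourhood of $\Gamma$. The plan is: (i) produce $\bfw\in Z^p_{s_0}(\Omega)$ with $\mathcal{T}_\Gamma\bfw=\boldsymbol{\mathcal{G}}_{\z}$ by solving, for a.e.\ $s\in\C_{s_0}$, the interior impedance problem $\curl\curl\hat{\bfw}_s-k_s^2\hat{\bfw}_s=\bf0$, $\Ddiv\hat{\bfw}_s=0$ in $\Omega$ with $c_0\curl\hat{\bfw}_s\times\bfnu+is\Lambda\hat{\bfw}_{s,T}=\hat{\boldsymbol{\mathcal{G}}}_{\z,s}$ on $\Gamma$; (ii) approximate $\bfw$ by incident fields ${\Mop}^i_\chi\g_n$ in $Z^p_{s_0}(\Omega)$; and (iii) conclude. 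For (i) I would note that the associated sesquilinear form differs from a coercive one by a compact perturbation, so the problem is Fredholm of index zero, and its unique solvability follows from the injectivity already proved in Theorem~\ref{th:boundT} via the combined integrations by parts over $\OmegaI$ and $\OmegaI^c$; tracking the $s$-dependence of the solution bound and invoking Lemma~\ref{lem:lubich} returns $\bfw$ to the time domain. For (ii) I would upgrade the trace-density of Theorem~\ref{th:boundM} to $Z^p_{s_0}(\Omega)$-density: since the electric interior boundary value problem is uniquely solvable for $\Im s>0$, a stability estimate bounding $\|\hat{\bfw}_s\|_{\bfL^2(\Omega)}$ by the tangential-trace norm with polynomial growth in $|s|$ converts $({\Mop}^i_\chi\g_n)_T\to\bfw_T$ into ${\Mop}^i_\chi\g_n\to\bfw$. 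Then $\mathcal{T}_\Gamma{\Mop}^i_\chi\g_n\to\mathcal{T}_\Gamma\bfw=\boldsymbol{\mathcal{G}}_{\z}$, Lemma~\ref{th:FwdProbOK} gives $\mathcal{Q}\mathcal{T}_\Gamma{\Mop}^i_\chi\g_n\to\boldsymbol{u}_{\z}$ in $\Omega^c$, and taking the tangential trace on $\GammaM$ yields $\mathcal{N}_\chi\g_n\to\boldsymbol{u}_{\z,T}$ in $H^{-1}_{s_0}(\R;\bfH^{-1/2}_T(\GammaM))$, while ${\Mop}^i_\chi\g_n\to\bfw$ is in particular bounded. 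That the Tikhonov minimizers of the stated functional realise such sequences would then follow from standard regularization theory combined with this factorization: the constructed bounded-incident-field sequence certifies that the residual is drivable to zero with controlled incident-field norm, and an Arens-type comparison transfers the boundedness of ${\Mop}^i_\chi\g_n$ (Case~1) and the blow-up of $\g_n$ (Case~2) to the minimizers.

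The main obstacle is steps (i)–(ii) of Case~1. Unlike the exterior problem, the interior impedance problem is \emph{not} coercive — the boundary term carries the wrong sign — so its solvability cannot be read off from Lax--Milgram and instead rests on the Fredholm alternative powered by the nonstandard injectivity argument of Theorem~\ref{th:boundT}; moreover, the return to the time domain demands explicit polynomial-in-$|s|$ control of both the interior solution operator and the trace-to-field stability constant, and assembling these $s$-uniform bounds so that Lemma~\ref{lem:lubich} applies is the technically delicate part. By contrast, the exterior uniqueness, the unique continuation, and the weak-compactness/singularity contradiction underlying Case~2 are comparatively routine.
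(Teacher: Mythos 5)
Your Case~2 argument coincides with the paper's: weak compactness of a bounded sequence, passage to the limit in the (weakly continuous) near field operator, exterior uniqueness plus unique continuation to propagate the identity of tangential traces on $\GammaM$ up to the sampling point, and the contradiction with the dipole singularity at $\z$. For Case~1, however, you take a genuinely different and more burdensome route, and the route as described has a gap. The paper never solves an interior impedance problem: since $\z\in\Omega$, the point source $\mathcal{E}^i(\cdot,\cdot-\tau;\z,\p)$ is itself a regular causal solution of the exterior problem, so $\mathcal{E}^i=\mathcal{Q}(\mathcal{T}_\Gamma\mathcal{E}^i)$, and the already-established density of the range of $\mathcal{T}_\Gamma\circ{\Mop}^i_\chi$ in the trace space (Theorem~\ref{th:boundT}) is applied \emph{directly to the impedance datum} $\mathcal{T}_\Gamma\mathcal{E}^i$ on $\Gamma$; continuity of $\mathcal{Q}$ and of the tangential trace then yields $\mathcal{N}_\chi\g_n\to\mathcal{E}^i_T$ on $\GammaM$. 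Your steps (i)--(ii) --- producing an interior field $\bfw$ with $\mathcal{T}_\Gamma\bfw=\boldsymbol{\mathcal{G}}_{\z}$ and upgrading trace density to $Z^p_{s_0}(\Omega)$-density --- are therefore not needed for the convergence claim, and they import exactly the difficulty the paper goes out of its way to avoid.

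The concrete gap is in your step (i). You propose to settle unique solvability of the interior impedance problem by Fredholm theory ``powered by the injectivity already proved in Theorem~\ref{th:boundT}.'' But that injectivity is established only for fields in $Z^p_{s_0}(\OmegaI)$, i.e.\ fields that extend as divergence-free wave solutions to the larger domain $\OmegaI$ enclosed by $\GammaI$ --- the proof integrates by parts over \emph{both} $\OmegaI$ and $\OmegaI^c=\Omega^c\cap\OmegaI$ and exploits cancellation of the $\GammaI$ terms. A general solution of the interior impedance problem in $\Omega$ need not admit such an extension, so the Fredholm alternative cannot be closed this way; indeed the paper explicitly remarks that the naive energy argument fails in the interior because the boundary term has the wrong sign. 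On top of that, returning to the time domain via Lemma~\ref{lem:lubich} would require polynomial-in-$|s|$ resolvent bounds for this interior problem uniformly over $\C_{s_0}$, which you do not supply. To be fair, the paper's own justification of the \emph{boundedness} of ${\Mop}^i_\chi\g_n$ in Case~1 is terse and also leans on a well-posedness claim for an interior problem ``closed with a tangential condition,'' so your instinct that something delicate lurks here is sound; but your proposal makes this unresolved interior problem load-bearing for the entire construction, whereas the paper's factorization through Theorem~\ref{th:boundT} confines it to the auxiliary boundedness statement.
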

\begin{remark}\label{ind_rem}
When we solve the near-field equation we take the polarization vector  $\p$  to be one of the unit vectors 
$\{ \e_1, \e_2, \e_3 \}$. The delay $\tau$ is kept constant for different sampling points $\z$ which in turn are typically on a uniform grid  in an {a priori} chosen test domain.  
Then we use the following
\begin{equation}
\Psi(\z) = 
  \frac{1
  }
  {\sum_{j=1}^3\| \mathbf{g}(\cdot,\cdot;\z,\e_j,{\tau}) \|_{ L^2(\R;\bfL^2_T(\GammaI))}}
  \label{Psidef}
  \end{equation}
as an indicator function for the support of the unknown scatterer. By our theorem we expect $\Psi$ to be small outside the scatterer.  As is usual for the LSM we do not use the norms from the theorem, relying instead on the equivalence of norms in a finite dimensional vector space (arising from the discretization of $\mathbf{g}$). 
\end{remark}

\begin{proof}
We study each case successively.\\
\emph{Case 1.}  Consider a sampling point $\z\in\Omega$. We are trying to solve approximately
$$
(\mathcal{Q} (\mathcal{T}_{\Gamma} ({\Mop}^i_{\chi}\g)))_T = 
\mathcal{E}^i_T(\cdot,\cdot-\tau;\z,\p) \qquad \text{on }\GammaM\times\R \, .
$$
To this end, let us notice that $\mathcal{E}^i(\cdot,\cdot-\tau;\z,\p)$ has no singularity in $\Omega^c$ and is the unique solution of the forward problem with impedance data ${\mathcal{T}}_{\Gamma} (\mathcal{E}^i(\cdot,\cdot-\tau;\z,\p))$, so that $\mathcal{E}^i(\cdot,\cdot-\tau;\z,\p) = \mathcal{Q} ({\mathcal{T}}_{\Gamma} (\mathcal{E}^i(\cdot,\cdot-\tau;\z,\p)))$. In particular $(\mathcal{Q} ({\mathcal{T}}_{\Gamma} (\mathcal{E}^i(\cdot,\cdot-\tau;\z,\p))))_T = \mathcal{E}^i_T(\cdot,\cdot-\tau;\z,\p) $ on $\GammaM\times\R $. Since the range of ${\mathcal{T}}_{\Gamma}{\Mop}_{\chi}: L^{2}_{s_0}(\R;\bfL^2_T(\GammaI))\to H^{p}_{s_0}(\R;\bfH^{1/2}_T(\Gamma))$ is dense, we can approximate ${\mathcal{T}}_{\Gamma} (\mathcal{E}^i(\cdot,\cdot-\tau;\z,\p))$ by a sequence ${\mathcal{T}}_{\Gamma} ({\Mop}^i_{\chi}\g_n )$, where $\g_n \in L^2_{s_0}(\R;\bfL^2_{T}(\GammaI))$ ($n\in\mathbb{N}$). The latter implies that $\mathcal{N}_{\chi}\g_n=(\mathcal{Q} (\mathcal{T}_{\Gamma} ({\Mop}^i_{\chi}\g_n)))_T \to \mathcal{E}^i_T(\cdot,\cdot-\tau;\z,\p)$ in $H^{p}_{s_0}(\R,\bfH^{-1/2}_T(\GammaM))$. Besides, ${\Mop}^i_{\chi}\g_n \in H^p_{s_0}(\R;\bfL^2(\Omega)) $ is divergence free and satisfies $({\Mop}^i_{\chi}\g_n)_{tt} + c_0^2 \curl (\curl {\Mop}^i_{\chi}\g_n) = \bf0$ in $\Omega\times\R$; the latter defines a well-posed interior problem when closed with a tangential condition; hence, from the convergence of  ${\mathcal{T}}_{\Gamma} ({\Mop}^i_{\chi}\g_n )$ towards  ${\mathcal{T}}_{\Gamma} (\mathcal{E}^i(\cdot,\cdot-\tau;\z,\p))$ we conclude that $\Mop^i_{\chi}\g_n$ must remain bounded.
\\
\emph{Case 2.}  Let us next consider a sampling point $\z\in{\OmegaM^{aux}}$ and any sequence $\g_n\equiv \g_{n}(\cdot,\cdot;\z,\p) \in L^2_{s_0}(\R;\bfL^2_{T}(\GammaI))$ such that $\mathcal{N}_{\chi}\g_n\to \mathcal{E}^i_T(\cdot,\cdot-\tau;\z,\p)$ in $H^{p}_{s_0}(\R,\bfH^{-1/2}_T(\GammaM))$  when $n\to\infty $. To derive a contradiction, assume that $\|\g_n\|_{L^2_{s_0}(\R;\bfL^2_{T}(\GammaI))}$ is bounded and take a (weakly) convergent subsequence $\g_n\to\g$ in $L^2_{s_0}(\R;\bfL^2_{T}(\GammaI))$ (here identified with the whole one for simplicity). Then 
$$
\mathcal{N}_{\chi}\g= 
(\mathcal{Q} ({\mathcal{T}}_{\Gamma} ({\Mop}^i_{\chi}\g)) )_T = \mathcal{E}^i_T(\cdot,\cdot-\tau;\z,\p) \qquad \text{on }\GammaM\times\R \, .
$$
In the Fourier-Laplace domain, both $\widehat{{\Mop}}^i_{\chi,s}\hat{\g}_s$ and ${\cal L}[{\cal E}^i(\x,\cdot-\tau;\z,\p)](s) $ are divergence free and solve the equation $\curl\curl\bfu-k_s^2\bfu=\bf0$ in $\OmegaM\setminus \{\z\}$. From this equation and the identity of tangential traces on $\GammaM$, we deduce that they match in ${\OmegaM^{aux}}\setminus\{\z\}$. 
Furthermore, we can use the fact that they are divergence free to rewrite the equation in $\OmegaM\setminus\{\z\}$ as the Helmholtz-like equation $\Delta\bfu +k_s^2\bfu =\bf0$. Then, thanks to the unique continuation principle in $\OmegaM\setminus \{\z\}$, we have that $\widehat{{\Mop}}^i_{\chi,s}\hat{\g}_s={\cal L}[{\cal E}^i(\x,\cdot-\tau;\z,\p)](s)$ in $\OmegaM\setminus \{\z\}$; this leads a contradiction when approaching to $\z$ (recall that $ {\cal L}[{\cal E}^i(\x,\cdot-\tau;\z,\p)](s)$ is singular at $\x=\z$).

\end{proof}


\section{Numerical Development}\label{sec:num}

By choosing appropriate units for time we may assume $c_0=1$ and we do this for the remainder of the paper.

{For each numerical experiment, the Ricker wavelet
\begin{equation}
\label{eq:riku}
\chi(t) = \big(1+2a\, (t-t_0 )^2\big) \, \exp\big(a\, (t-t_0 )^2\big)\, ,
\end{equation}
where $a=-(\pi f_0)^2$, is used as the source modulation function for the incident field (see (\ref{Ei_TD})). Here $f_0$ is the peak frequency of the source, and $t_0 = 1.2/f_0$ is the time delay.}

Figure \ref{fig:signals} shows the Ricker wavelet as a function of time (left panel) and its normalized Fourier spectrum magnitude as a function of wavelength (right panel). The data is shown for frequencies $f_0 = 1$ and $f_0 = 2$ and the propagation media is assumed to be air in which $\epsilon_r = 1$ and $\mu_r = 1$. As described in the appendix, we use a sponge layer (SL) to decrease reflections from the mesh truncation boundary. In the figure, we also visualize the SL thickness values used for both source functions (see the explanation about such sponge layer in the next paragraph and in Appendix \ref{sec:dg}).

\begin{figure}[!ht]
\centering
\resizebox{0.49\textwidth}{!}{\includegraphics{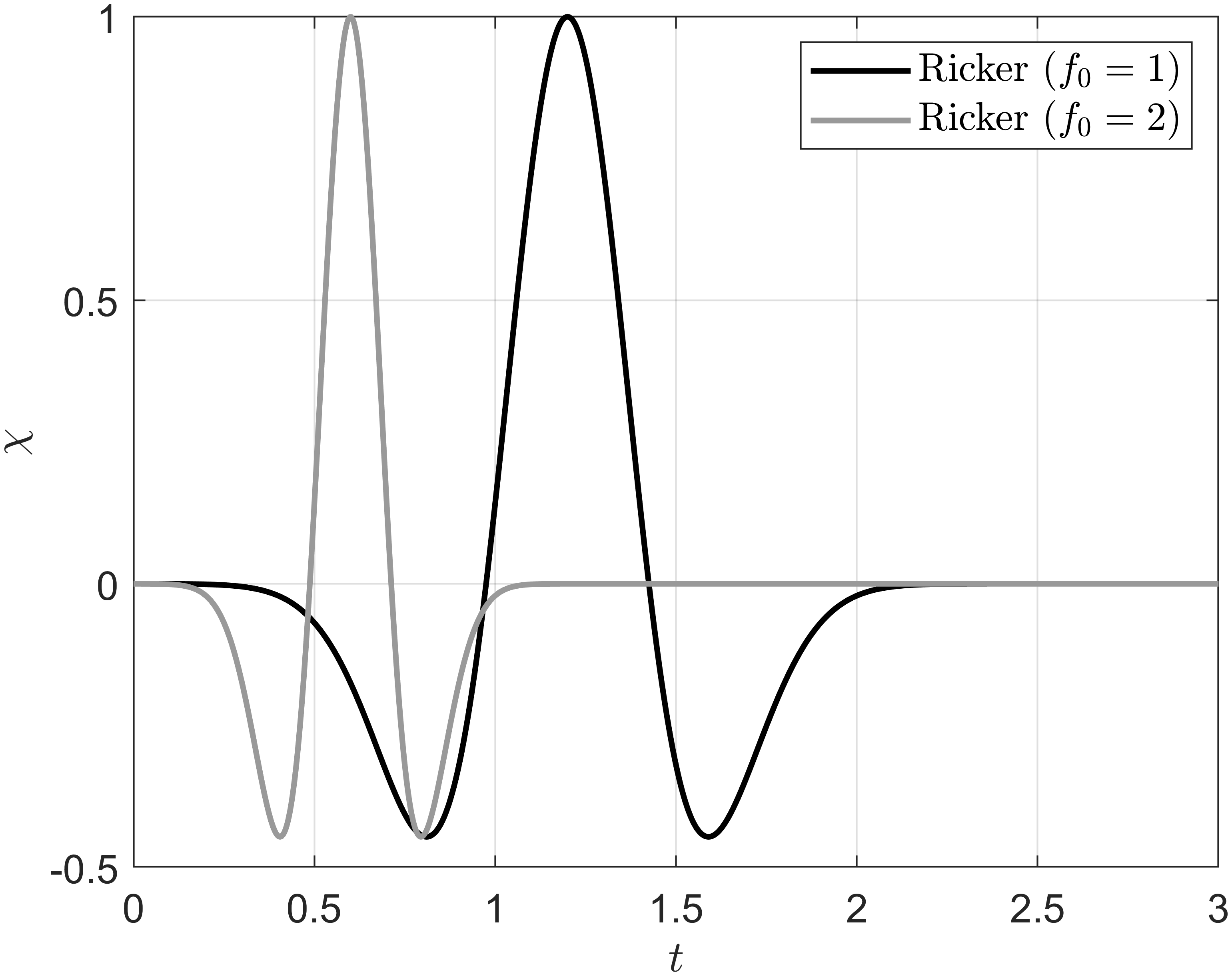}}
\resizebox{0.49\textwidth}{!}{\includegraphics{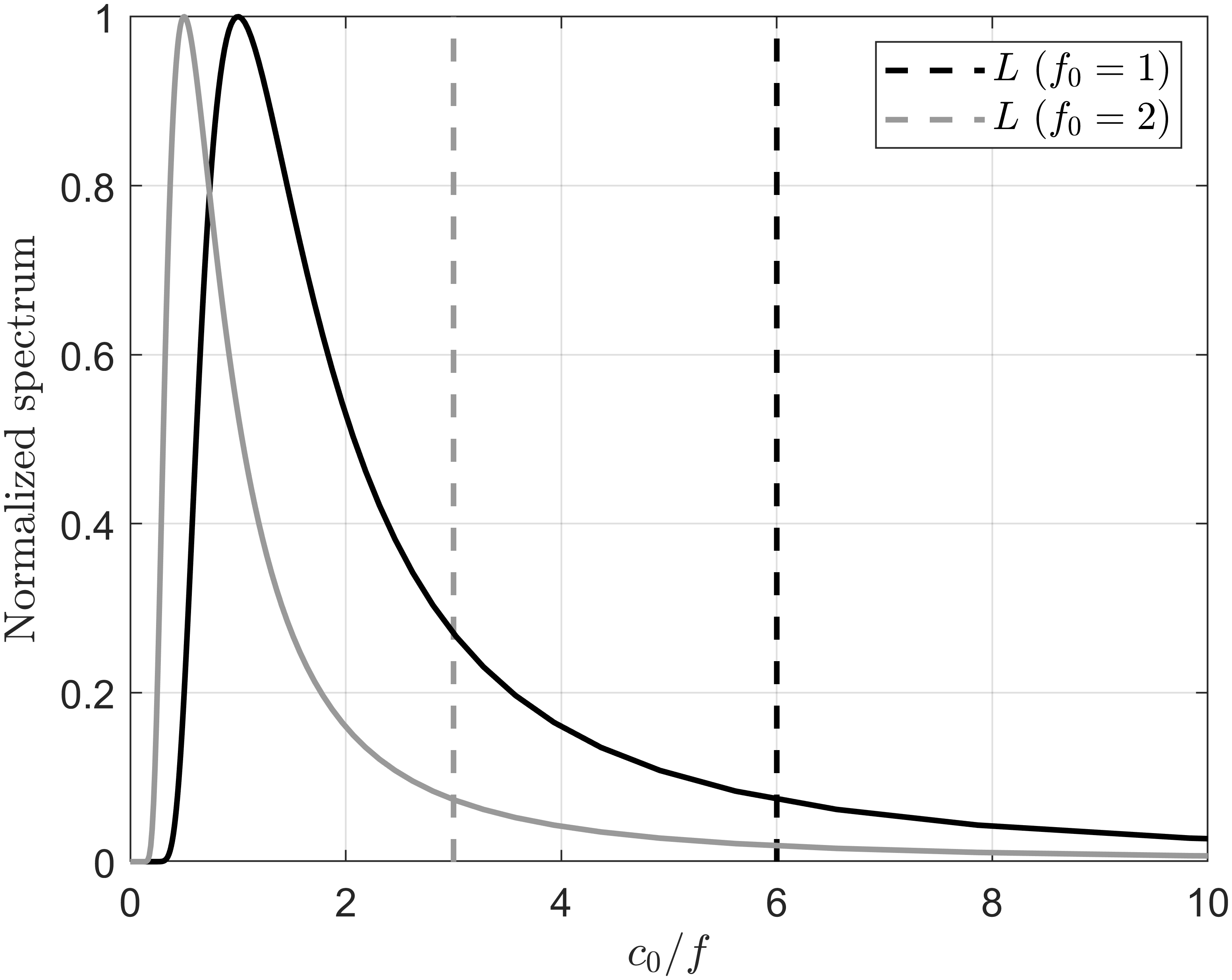}}
\caption{Ricker wavelet for two frequencies in the time domain (left) and the normalized Fourier spectrum magnitude (right). In the graph on the right, also the thickness of the SL is illustrated (dashed vertical lines). }\label{fig:signals}
\end{figure}

\subsection{Algorithmic details for the forward problem}
\label{sec:forward}

To generate synthetic data for testing the TD-LSM we use the nodal Discontinuous Galerkin (DG) method of \cite{hesthaven_warburton_book} for spatial discretization, while the time integration is done by the low-storage explicit Runge-Kutta method \cite{carpenter94}.  More details can be found in the Appendix \ref{sec:dg}.  Here we 
discuss the choice of parameters for the particular tests in this paper.

For the DG simulations the computational domain is divided into tetrahedral elements. Each of the elements contains information on the physical parameters, i.e., the relative permittivity $\epsilon_r$ and relative permeability $\mu_r$ that are assumed to be constant and scalar inside the element but may jump across interfaces between elements. The time step $\Delta t$ for the low-storage Runge-Kutta time stepping method depends on the element size, selected basis order, and the physical parameters, and is computed from
\begin{equation}
\label{eq:CFL}
\Delta t = \min\left\{\frac{h_{\min}^{(\ell)}}{2c^{(\ell)}(N_p+1)^2}\,\, |\,\, \ell=1,\ldots,N_K\right\} ,
\end{equation}
where $c^{(\ell)}=\big(\epsilon_r^{(\ell)}\mu_r^{(\ell)}\big)^{-1/2}$ is the wave speed on the element $\ell$, $N_p$ is the basis order for the DG discretization, $h_{\min}^{(\ell)}$ is the grid size in the element $\ell$, and  $N_K$ is the total number of elements in the grid. In this paper, for the grid size we use smallest distance between two vertices. 

{We choose $\GammaI=\GammaM$ to be the surface of a cube centered at the origin having side length 8. This represents a balance between wanting sources and receivers far from the unknown
scatterers, while also controlling computational cost. We pad this domain by a small layer of width 0.1 to keep sources away from the SL used to terminate the computational domain. Thus, for} each simulation, the main region of interest is an origin-centered cube with a side length of 8.2.
Since the use of a simple Silver-M\"uller absorbing boundary condition on the outer boundary causes unwanted reflections back to the computational domain, we extend the cube size to $(8.2 + 2L)$ {where $L$ is a parameter} and use this extension as the SL that damps the wavefield more efficiently. {The SL is detailed in the Appendix.}  In the current work, the SL is used with the following parameter choices: $L = 6c_0/f_0$, and $\beta_{\max}=10f_0$, where $f_0$ denotes the {peak frequency of the} source {modulation function defined in (\ref{eq:riku}) and $\beta_{\max}$ is the maximum value for the damping coefficient (see Equation (\ref{beta_def}))}. The SL performance is enhanced by coupling it with grid stretching. The stretching coefficient is set to 0.2.

For the computational grid, we use a criterion of 1.5 elements per wavelength (computed using the source peak frequency $f_0$) when constructing the mesh in the main region of interest while in the SL the element size criteria is relaxed to 1.0 element per wavelength. Ninth-order polynomials are applied on each element. The mesh is generated using COMSOL Multiphysics v5.5.

All DG simulation results shown in the following sections are computed using a GPU cluster called Puhti AI, which is part of the CSC's - IT Center for Science supercomputers facilities in Finland. The Puhti AI artificial intelligence partition is equipped with Nvidia Volta V100 graphics cards. Each case is simulated using 20 GPU nodes.

\subsection{Algorithmic details for the inverse problem}\label{sec:IP}

To discretize the near-field equation (\ref{NFE_2}) we discretize the near-field operator $\mathcal{N}_{\chi}$ using quadrature.  To do this we choose a discrete set of source points for the incident fields,   $\{ \x^{\rm{}I}_k |\, k=1,\dots,N_{\rm{}I} \} $ on $\GammaI$, and assume that the scattered field is known (in practice, measured) at measurement points $\{ \x^{\rm{}M}_k |\, k=1,\dots,N_{\rm{}M} \} $ on $\GammaM$.  {In this paper, we usually choose $N_{\rm{}I}<N_{\rm{}M}$ so that, after discretization of the near-field equation described below, we have an overdetermined but ill-conditioned system.} 
 
Integrals over $\GammaI$ are replaced using an associated quadrature formula
\begin{equation}
  \int_{\GammaI} f(\x) \, dA_{\x}
  \approx \sum_{k=1}^{N_{\rm{}I }} \omega_k f(\x^{\rm{}I}_{k}) \, .\label{quadGi}
\end{equation}
As usual the weights are chosen to optimize the degree of precision of the quadrature.

We choose a uniform discretization in time between
$t=0$ and $t=T_{\max}$, yielding
$N_{\rm{}T}+1$ steps $t_j = j \, T_{\mathrm{\max}} / N_{\rm{}T}$
for $j=0,\dots,N_{\rm{}T}$. It is important that $T_{\max}$ is chosen large enough
so that most of the wave energy has passed $\GammaM$
at $t=T_{\max}$. For each source point $\mathbf{x}_k^{\rm{}I}$, we choose two
polarizations $\boldsymbol{p}^{\rm{}I}_{k,\ell}$, $\ell=1,2$, that are mutually orthogonal and orthogonal
to the unit normal at the source point on $\GammaI$ (denoted $\boldsymbol{\nu}(\x^{\rm{}I}_k)$).
Then,  for indices
$(i,j,k) \in \{1,\ldots, N_{\rm{}I}\} \times \{1,\ldots, N_{\rm{}T} \} \times \{1,\ldots, N_{\rm{}M}$\}, we compute the scattering data  $\mathcal{E}_T(\x^{\rm{}M}_i,t_j;\x^{\rm{}I}_k, \boldsymbol{p}^{\rm{}I}_{k,\ell})$
using the GPU accelerated time domain discontinuous Galerkin method described in Section~\ref{sec:forward} and Appendix \ref{sec:dg}. 

For a matrix $\boldsymbol{g} =(g_{k,j})\in\R^{2N_{\rm{}I} \times N_{\rm{}T}}$ of point values of components of $\boldsymbol{g}$ and indices
$(i,j) \in \{1,\ldots, 2N_{\rm{}I}\} \times \{1,\ldots, N_{\rm{}T} \} $, we now approximate the near-field operator by quadrature and collocation as
\[
  ({\cal N}_\chi\boldsymbol{g})(\x^{\rm{}M}_i;t_j)
  \approx   \sum_{l=1}^{N_{\rm{}T}} \sum_{k=1}^{N_{\rm{}I}} \frac{\omega_k}{N_{\rm{}T}}
  \big( g_{k,j} \, \mathcal{E}_T(\x^{\rm{}M}_i,t_j- t_l;\x^{\rm{}I}_k,\boldsymbol{p}^{\rm{}I}_{k,1})  + g_{k+N_{\rm{}I},j} \, \mathcal{E}_T(\x^{\rm{}M}_i,t_j- t_l;\x^{\rm{}I}_k,\boldsymbol{p}^{\rm{}I}_{k,2})\big) \, .
\]
In this sum, the values
$\mathcal{E}_T(\x^{\rm{}M}_i,t_j- t_l;\x^{\rm{}I}_k,\boldsymbol{p}_{k,\ell}^{\rm{}I})= (\boldsymbol{\nu}(\x^{\rm{}M}_i) \times
  \mathcal{E}(\x^{\rm{}M}_i,t_j- t_l;\x^{\rm{}I}_k,\boldsymbol{p}^{\rm{}I}_{k,\ell} ))\times \boldsymbol{\nu}(\x^{\rm{}M}_i)$ are replaced by zero where
$t_j- t_l \leq 0$ or $t_j- t_l > T_{\max}$  (which is consistent with causality and the choice of $T_{\max}$ large enough, respectively).
Here $\boldsymbol{\nu}(\x^{\rm{}M}_k)$ denotes the unit outward normal on $\GammaM$ at $\x^{\rm{}M}_k$. 

Now taking the dot product with two independent tangential vectors $\boldsymbol{p}^{\rm{}M}_{i,\ell}$, $\ell=1,2$, on $\GammaM$ that are orthogonal to $\boldsymbol{\nu}(\x^M_i)$ we obtain the discrete version of the near-field equation
\begin{equation}\label{DNFE}
  \begin{array}{l}  
\displaystyle\sum_{l=1}^{N_{\rm{}T}} \sum_{k=1}^{N_{\rm{}I}} \frac{\omega_k}{N_{\rm{}T}}
  \big( g_{k,j} \, \boldsymbol{p}_{i,q}^{\rm{}M}\cdot\mathcal{E}_T(\x^{\rm{}M}_i,t_j- t_l;\x^{\rm{}I}_k,\boldsymbol{p}^{\rm{}I}_{k,1})  + g_{k+N_{\rm{}I},j} \, \boldsymbol{p}_{i,q}^{\rm{}M}\cdot \mathcal{E}_T(\x^{\rm{}M}_i,t_j- t_l;\x^{\rm{}I}_k,\boldsymbol{p}^{\rm{}I}_{k,2})\big)\\ 
  \qquad \qquad =
  \boldsymbol{p}_{i,q}^{\rm{}M}\cdot \mathcal{E}^i_T(\x^{\rm{}M}_i,t_j;\z,\boldsymbol{p}) \, ,
  \end{array}
\end{equation}
which  is required to hold for $q=1,2$ and $1\leq i\leq N_{\rm{}M}$, $1\leq j\leq N_{\rm{}T}$. Accordingly,  for each sampling point $\z$ and sampling polarization $\boldsymbol{p}$, we need to solve a linear system of $2N_{\rm{}I}N_{\rm{}T}$ unknowns and $2N_{\rm{}M}N_{\rm{}T}$ equations.  
The left-hand side can be represented by a matrix denoted $A$ of dimension $(2N_{\rm{}M} N_{\rm{}T}) \times (2N_{\rm{}I} N_{\rm{}T})$, and this matrix is independent of the sampling point $\z$ or sampling polarization $\boldsymbol{p}$.  {To simulate small measurement errors, a new perturbed matrix is computed via
\[
A^{\rm{}noise}_{\ell,m}=A_{\ell,m}(1+\epsilon_{\rm{}noise} \xi_{\ell,m})
\]
for all $\ell$ and $m$, where $\xi_{\ell,m}$ is a pseudorandom number equidistributed in $[-1,1]$.  We choose $\epsilon_{\rm{}noise}=0.01$ and this results in approximate 0.6\% relative error in the matrix $\ell^2$-norm. The perturbed
system (\ref{DNFE}) using $A^{\rm{}noise}$ in place of $A$} must be solved for each choice of the auxiliary source point $\z$ and auxiliary  polarization $\boldsymbol{p}$. 

In a typical numerical experiment, we use $T_{\max}=20$ and request $N_{\rm{}T}=1250$ time steps from the forward solver (our forward solver uses the time-step in Section~\ref{sec:forward} and this is down-sampled to give the number of points for the inverse solver). For the tests on the next section the measurement grid has $N_{\rm{}M}=96$ and we use $N_{\rm{}I}=54$ and, therefore, the matrix representing the near-field operator is
$ 240,000 \times  135,000  $.  Thus the discrete near-field equation cannot be solved directly by least squares (in addition it is expected to be ill-conditioned since the kernel of the integral operator ${\cal N}_{\chi}$ is smooth). However the action of the matrix can
be computed efficiently using the FFT based method described in \cite{MarmoratEtal}, and so a truncated singular value decomposition
can be computed using the MATLAB function {\tt eigs}. We typically compute 2500 singular vectors and report results for 25, 1000, and 2500 vectors in the truncated SVD expansion.  Then this truncated expansion is used to compute a regularized approximation of $\boldsymbol{g}$ (to handle possible ill-conditioning when large numbers of singular values are used, we also use Tikhonov regularization).  

In all cases the measurement points and source points are located on the surface of the cube $[-4,4]^3$.  This choice represents a balance between the desire to test the inverse solver with  remote measurements and the need for reasonable run times from the forward {solver}.
We consider both measurements and sources located uniformly on this surface as described above.

\begin{figure}[!h]
    \centering
    \resizebox{0.25\textwidth}{!}{\includegraphics{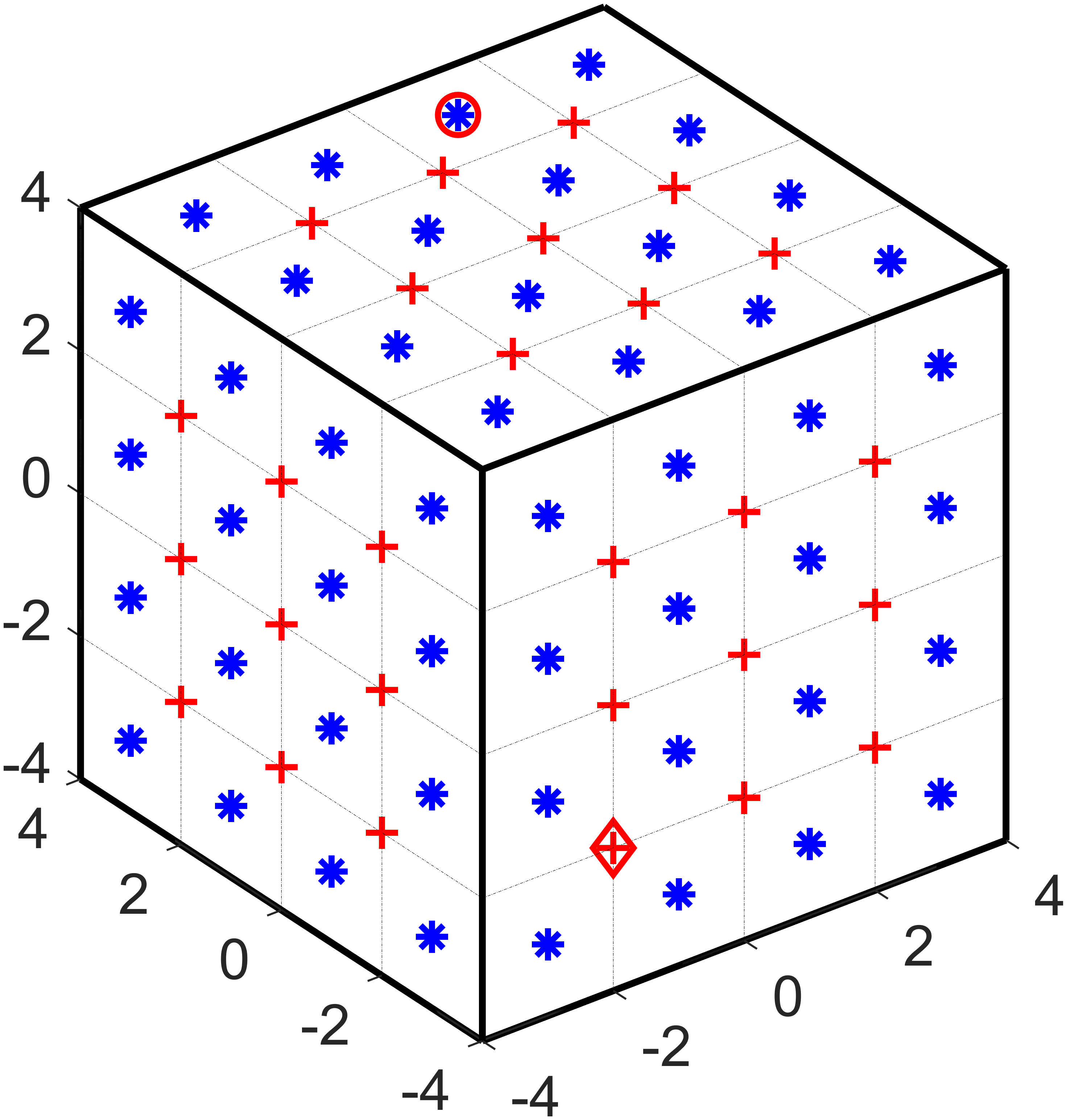}}\hfill
    \resizebox{0.35\textwidth}{!}{\includegraphics{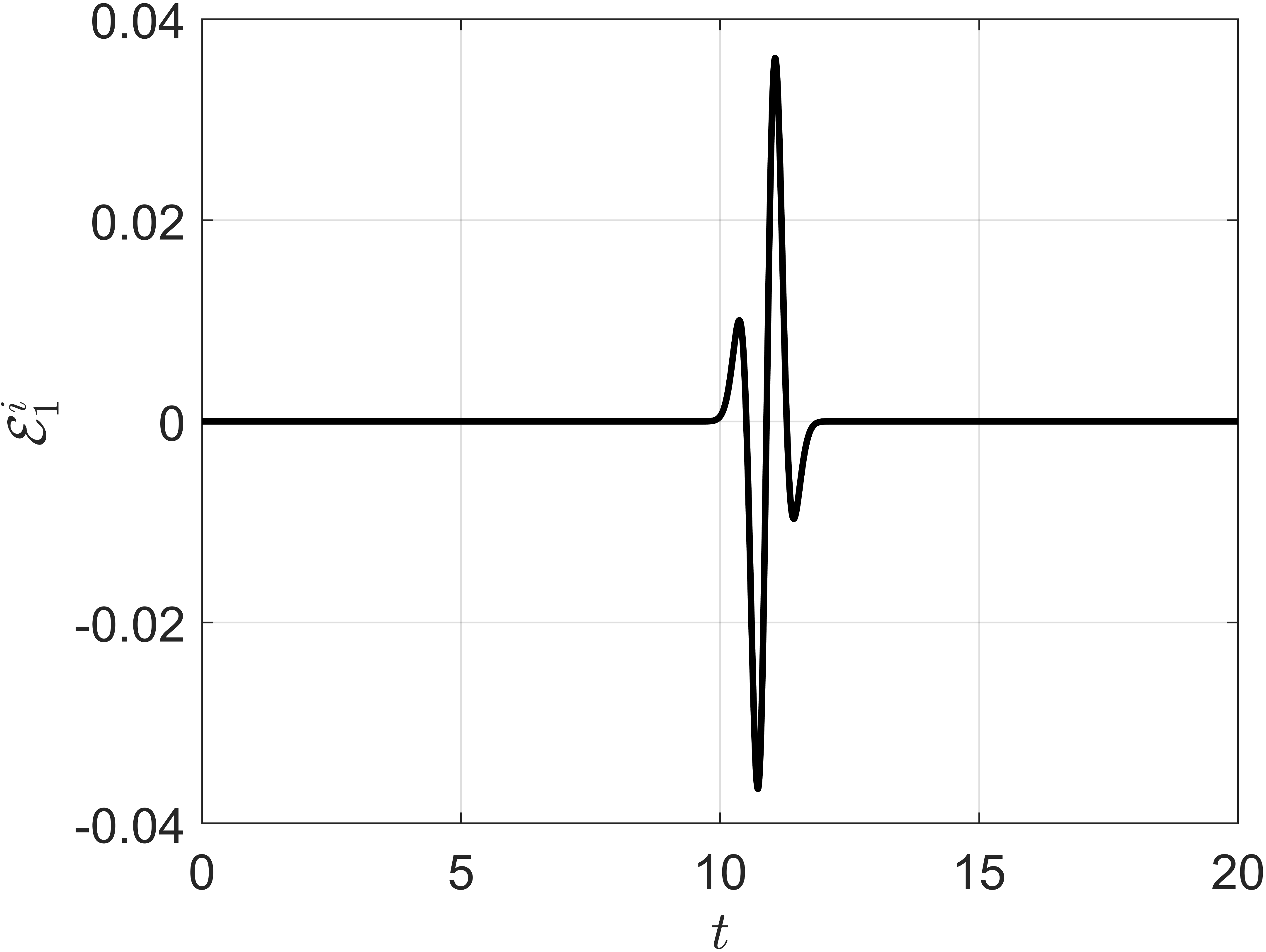}}\hfill
    \resizebox{0.35\textwidth}{!}{\includegraphics{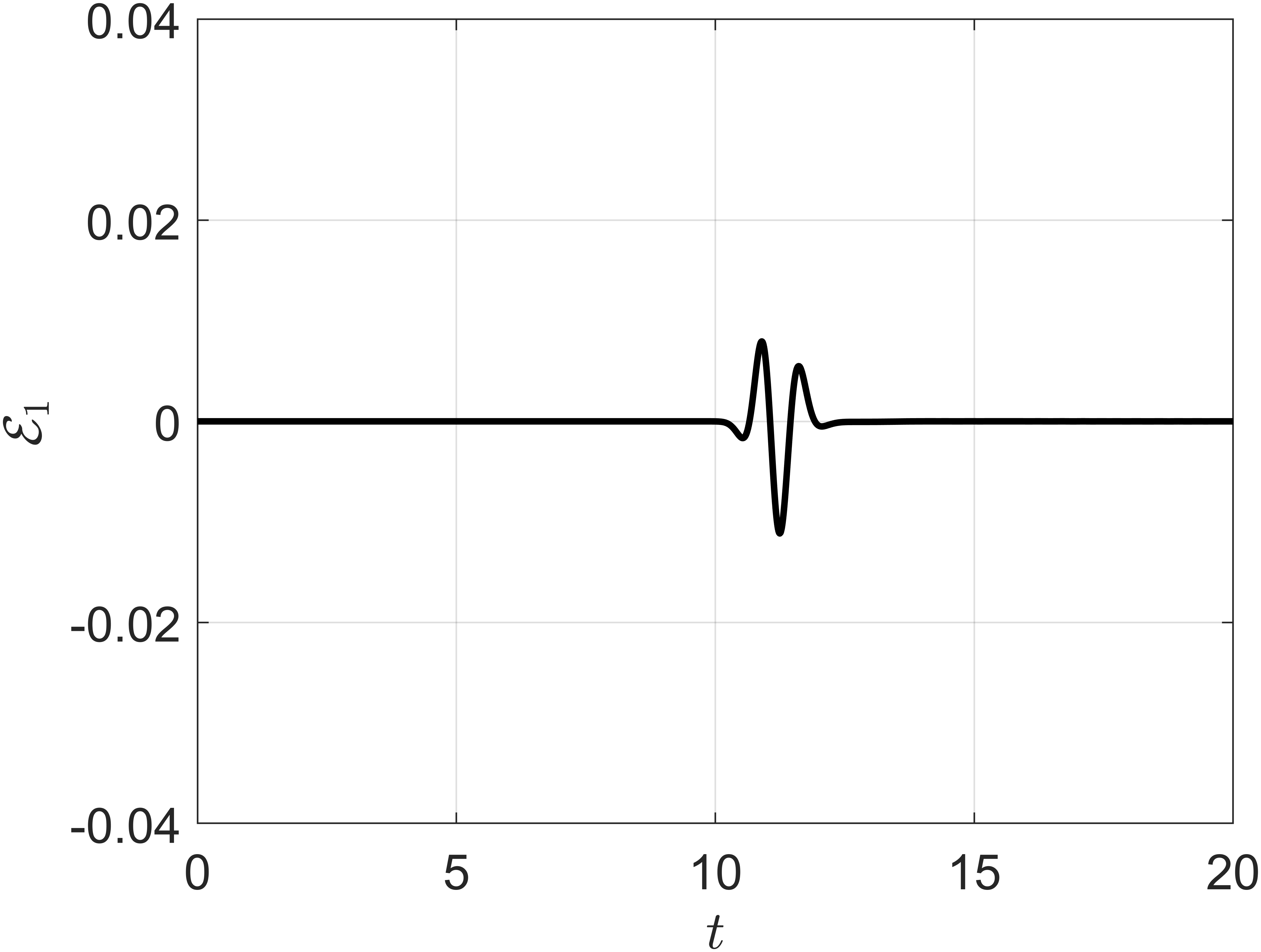}}
    \caption{Source and measurement configuration. Left panel: Source points (marked with red $+$) and measurement points (marked with blue $*$) on the surface $\partial ([-4,4]^3)$. Center panel: The time course of the $\mathcal{E}_1^i$ component of the incident field with polarization $\p=(0,0,1)$ due to the source at the point marked with a red $\diamond$ in the left panel and measured at the measurement point marked with a red $\circ$ in the left panel. Right panel: the scattered field $\mathcal{E}_1$ due to the source used in the left panel, and measured at the indicated measurement point. Here the target is the two cube case with impedance boundary data.}
    \label{fig:meas1}
\end{figure}

We solve the discrete near-field equation for $\z$ on a $41 \times 41\times 41$ uniform sampling grid in the search domain
$[-1.5,1.5]^3$.  Since the search points are on a grid with gridsize $3/41$ we cannot expect to get
 resolution on objects smaller than a few multiplies of this size.  For each source point $\z$, we solve the discrete near-field equation (\ref{DNFE}) for three auxiliary polarizations $\p$ along the axis directions.  The resulting indicator is the reciprocal of the sum of the  $\ell^2$-norm of $\g$ for each polarization (a discrete analogue to $\Psi(\z)$.
 
   In order to present results we draw an isosurface of this function in three dimensional space.  The value of the indicator for this isosurface is taken to be $(\alpha\min_z\Psi(\boldsymbol{z}) + (1-\alpha)\max_z\Psi(\boldsymbol{z}))$ and we choose $\alpha=0.1$
 for most of the numerical experiments except where noted. {The choice of cutoff $\alpha$ works well for the impedance boundary condition, but overestimates the size of a PEC
 object as shown later in Fig.~\ref{fig:2cubes_fullapp_other_cases}.  In practice this parameter
 should have to be ``calibrated'' using computational results for known objects depending on their nature.  This approach is suggested for the related frequency domain LSM for Maxwell's equations in~\cite{ColtonHaddarMonk}.}

We will consider two target geometries:
\begin{description}
    \item[Two cubes:] The target is $\Omega=[-0.75, -0.25]^3\cup
[0.25, 0.75]^3$.  See Fig.~\ref{fig:2cubes_fullapp}, top left panel. 
\item[Four small cubes:] Here the cubes are all translates of the cube $[0,0.2]\times[0,0.2]\times[0,0.2]$.  The cubes are centered at $(0.3,0.2,0.9)$, $(0.3,0.1,0.1)$, $(-0.1,0.2,0)$ and $(0.4,-0.4,-0.5)$. See Fig.~\ref{fig:4cubes_fullapp}, left panel.
    \end{description}
The scattering data is measured on a $4\times 4$ grid on each face of $\GammaM$ and is due to sources on a $3\times 3$ grid on each face of $\GammaI=\GammaM=\partial ([-4,4]^3)$; see Fig.~\ref{fig:meas1}.

\subsection{Numerical results for two cubes}

For the first example, which is analyzed in Theorem~\ref{main_one} in the preceding section, we choose the scatterer to be impenetrable and to have an impedance boundary condition with $\Lambda = \sqrt{2}I_3$ in Equation (\ref{eq:sma_1}) ($\epsilon_r^{{\rm bc}}=2$ and $\mu_r^{{\rm bc}}=1$, see Appendix \ref{sec:dg}). In addition, we set $f_0=1$ for the source wavelet. The measurement setup is shown in Fig.~\ref{fig:meas1} where we also show the incident wave field and the data at a random measurement point located on another face of the measurement surface. 

We solve the inverse problem with the truncated singular value decomposition  using $1000$ singular vectors and with Tikhonov regularization with parameter $10^{-1}$.  In fact, because of the magnitude of the singular values for this problem, the Tikhonov regularization term does not strongly influence the solution.  Surprisingly, the discrete near-field equation is not  ill-conditioned at least for the relatively small number of singular vectors used here.  Note also that the source and measurement setup, having more measurement points than source points, is different to previous work on the TD-LSM and this may improve conditioning. 

\begin{figure}[!h]
    \centering
    \resizebox{0.4\textwidth}{!}{\includegraphics{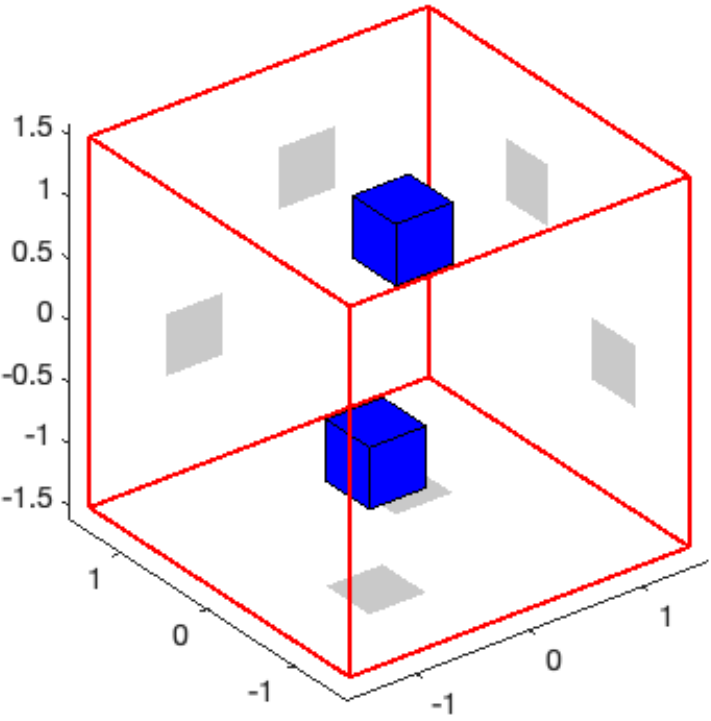}}
    \resizebox{0.41\textwidth}{!}{\includegraphics{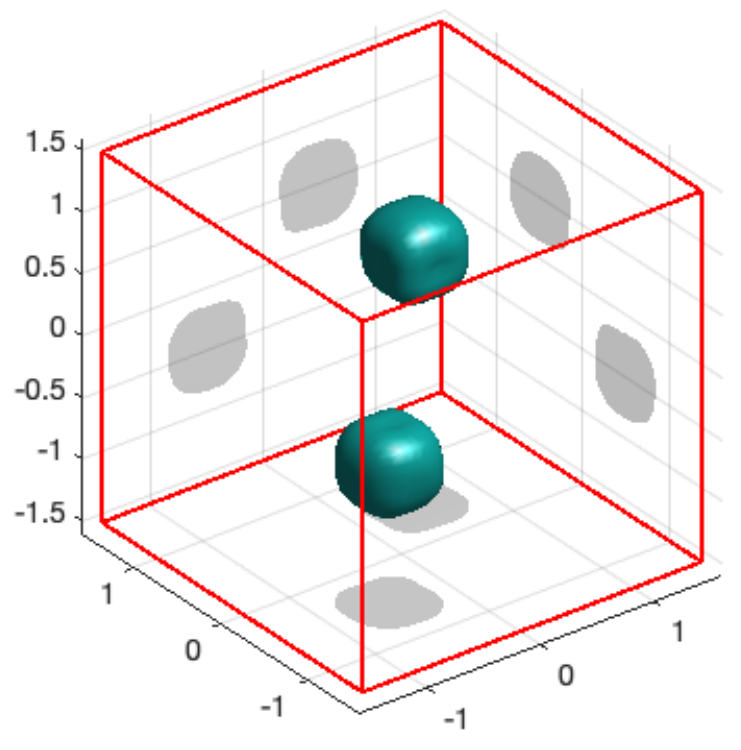}}\\
    \resizebox{0.4\textwidth}{!}{\includegraphics{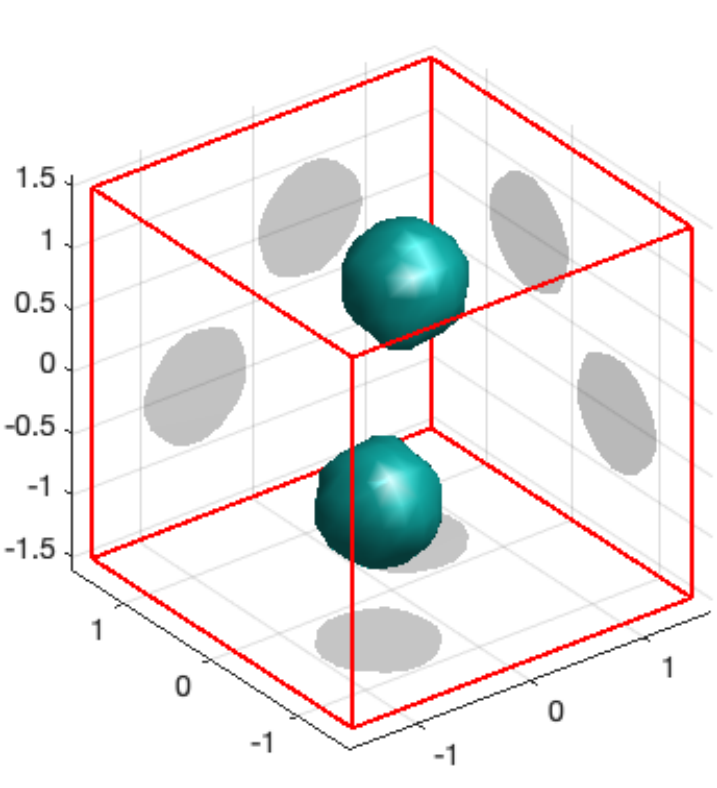}}
    \resizebox{0.41\textwidth}{!}{\includegraphics{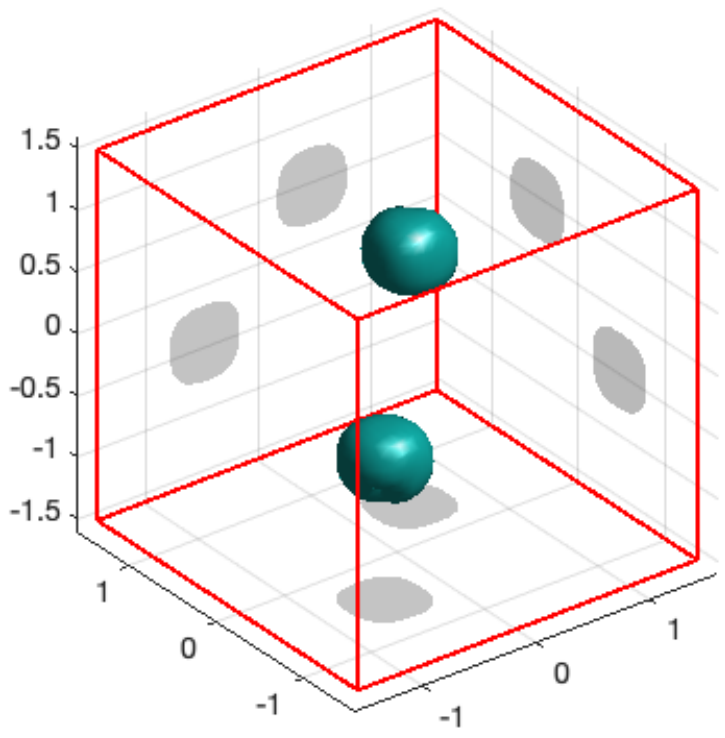}}
    \caption{Top row: left panel shows the exact scatterer, right panel  shows results for impedance boundary conditions. Bottom row: left panel shows results for PEC boundary conditions, and right panel shows results for a 
    penetrable scatterer.  Parameters for the inverse problem are identical for all three problems as discussed in the text. The red 
    cube shows the search region containing the sampling points used to compute $\mathbf{g}$.}
    \label{fig:2cubes_fullapp}
\end{figure}

In Fig.~\ref{fig:2cubes_fullapp} top right panel, we see that the TD-LSM correctly identifies an
approximation to the two cube scatterer.  Although not covered by this paper, a 
similar analysis holds for the TD-LSM applied to a scatterer with a perfect  electric  conductor (PEC) boundary condition.  The corresponding reconstruction is shown in in Fig.~\ref{fig:2cubes_fullapp} bottom left panel. {This can be improved somewhat by adjusting the graphing parameter $\alpha$ but we have kept $\alpha=0.1$ in the figure for consistency.} Finally in Fig.~\ref{fig:2cubes_fullapp} bottom right panel we show the reconstruction when the cubes are penetrable and $\epsilon_r=2I_3$ in each cube. Although the reconstructions are slightly different for each type of scatterer, the 
method gives similar reconstructions using exactly the same inverse {solver} and parameters in all these cases.

Of particular note, the results in Fig.~\ref{fig:2cubes_fullapp} bottom right panel are for a penetrable scatterer.  As we commented in the introduction, the Laplace transform based analysis
used here cannot be applied in this case.  Nevertheless the numerical results indicate that
the TD-LSM can work even for penetrable scatterers. Of course much more testing is needed to make this claim stronger.

The degree of regularization (in our case, {provided both by the Tikhonov regularization and} by the limited  number of singular vectors used in the truncated SVD) effects the quality of the reconstruction.  In Fig.~\ref{fig:2cubes_fullapp_different_nSVD} we repeat the reconstruction of the two cubes using 25 and 2500 singular vectors and otherwise the same parameters as used for Fig.~\ref{fig:2cubes_fullapp}. Both
the highly regularized (25 vectors) and less regularized (2500 vectors with Tikhonov regularization) solution successfully 
locate the scatterers.  The size of the reconstructed images (controlled by the isosurface parameter $\alpha$) are respectively too small and too large.  In the latter case, by changing $\alpha$ we can improve the fit so that the isosurface cutoff needs to be chosen for the particular regularization used.

\begin{figure}[!h]
    \centering
    \resizebox{0.43\textwidth}{!}{\includegraphics{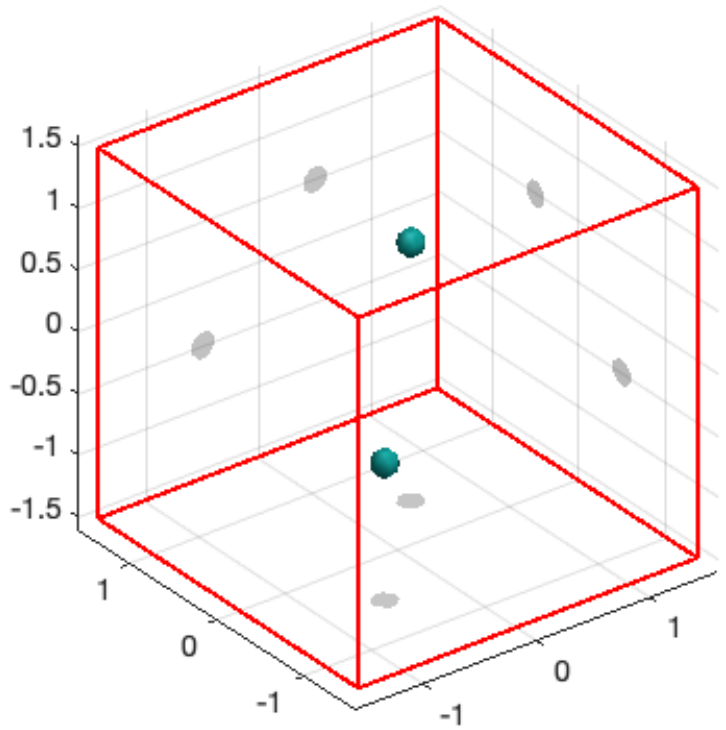}}
    \resizebox{0.43\textwidth}{!}{\includegraphics{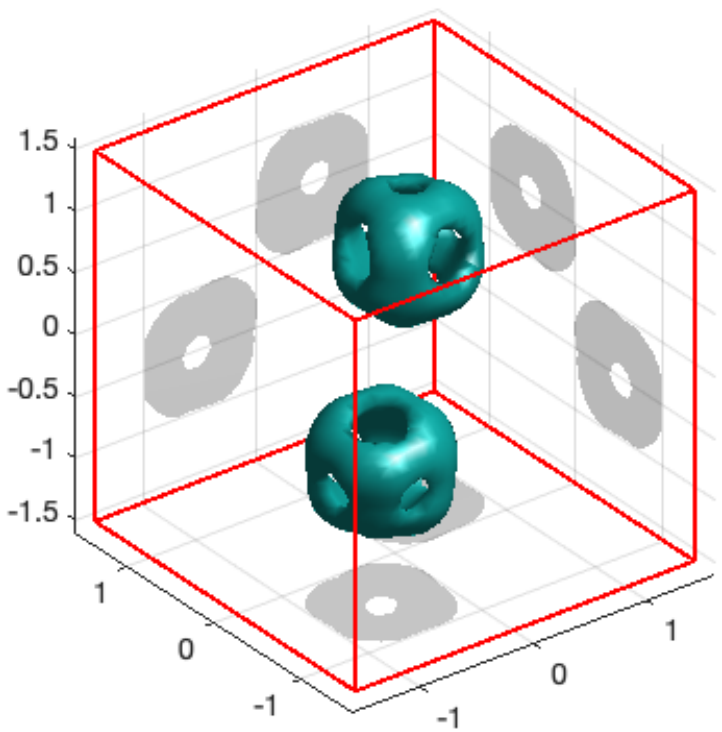}}
\caption{ Left panel shows results for impedance boundary conditions with $25$ singular vectors, and the right-hand side shows the result using 2500 singular vectors. The remaining parameters are identical to those in Fig.~\ref{fig:2cubes_fullapp}.}
    \label{fig:2cubes_fullapp_different_nSVD}
\end{figure}

{For the two cube case the method is not sensitive to the choice of frequencies near $f_0=1$. In the left panel of Fig.~\ref{fig:2cubes_fullapp_other_cases} we show the result of using 
$f_0=2$ and parameter $\alpha=0.25$.  The choice of $\Lambda=\sqrt{2}I_3$ may effect the details of
the reconstruction, but we note that the PEC case can be approximated by choosing $\Lambda$ large
so we do not expect a deterioration of performance for larger $\Lambda$ (see Fig.~\ref{fig:2cubes_fullapp}).  An amusing case is $\Lambda=I_3$ which corresponds to a low order
absorbing boundary condition on $\Gamma$.  Results for this case are shown in Fig.~\ref{fig:2cubes_fullapp_other_cases} right panel. Perhaps surprisingly, the reconstruction compares well to the the reconstruction when $\Lambda=\sqrt{2}I_3$ shown in Fig.~\ref{fig:2cubes_fullapp}.}

\begin{figure}[!h]
    \centering
    \resizebox{0.43\textwidth}{!}{\includegraphics{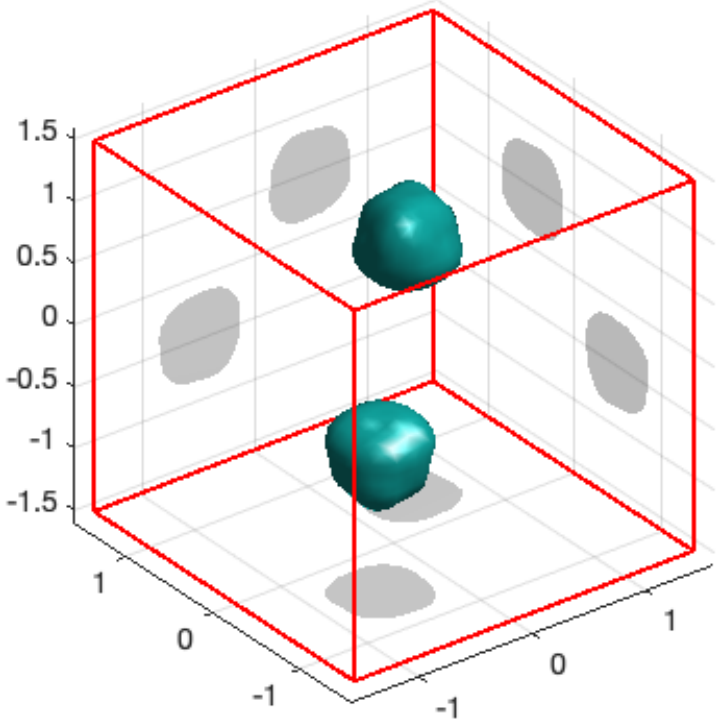}}
    \resizebox{0.43\textwidth}{!}{\includegraphics{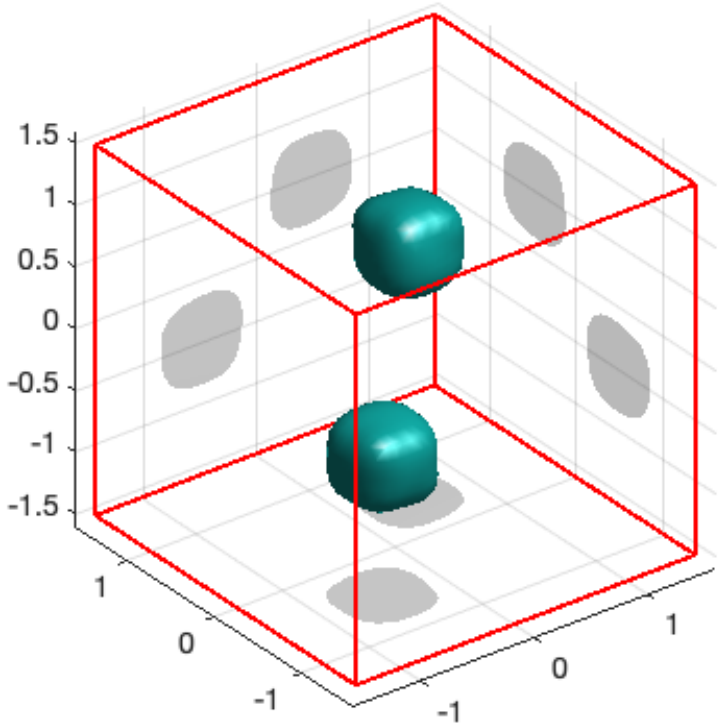}}
\caption{ The left panel shows the 2 cube impedance obstacle with $\Lambda=\sqrt{2}I_3$ with $f_0=2$,  $\alpha=0.25$ and 2,500 singular vectors. The right-hand panel has identical parameters to Fig.~\ref{fig:2cubes_fullapp}
except $\Lambda=I_3$ ($\alpha=0.1$).}
    \label{fig:2cubes_fullapp_other_cases}
\end{figure}

{In a final test we reduce the number of measurement points so that $N_{\rm I}=N_{\rm M}$. The quality of the reconstruction is slightly worse than that in Fig.~\ref{fig:2cubes_fullapp} when $N_{\rm M}>N_{\rm I}$.}
\begin{figure}[!h]
    \centering
\resizebox{0.41\textwidth}{!}{\includegraphics{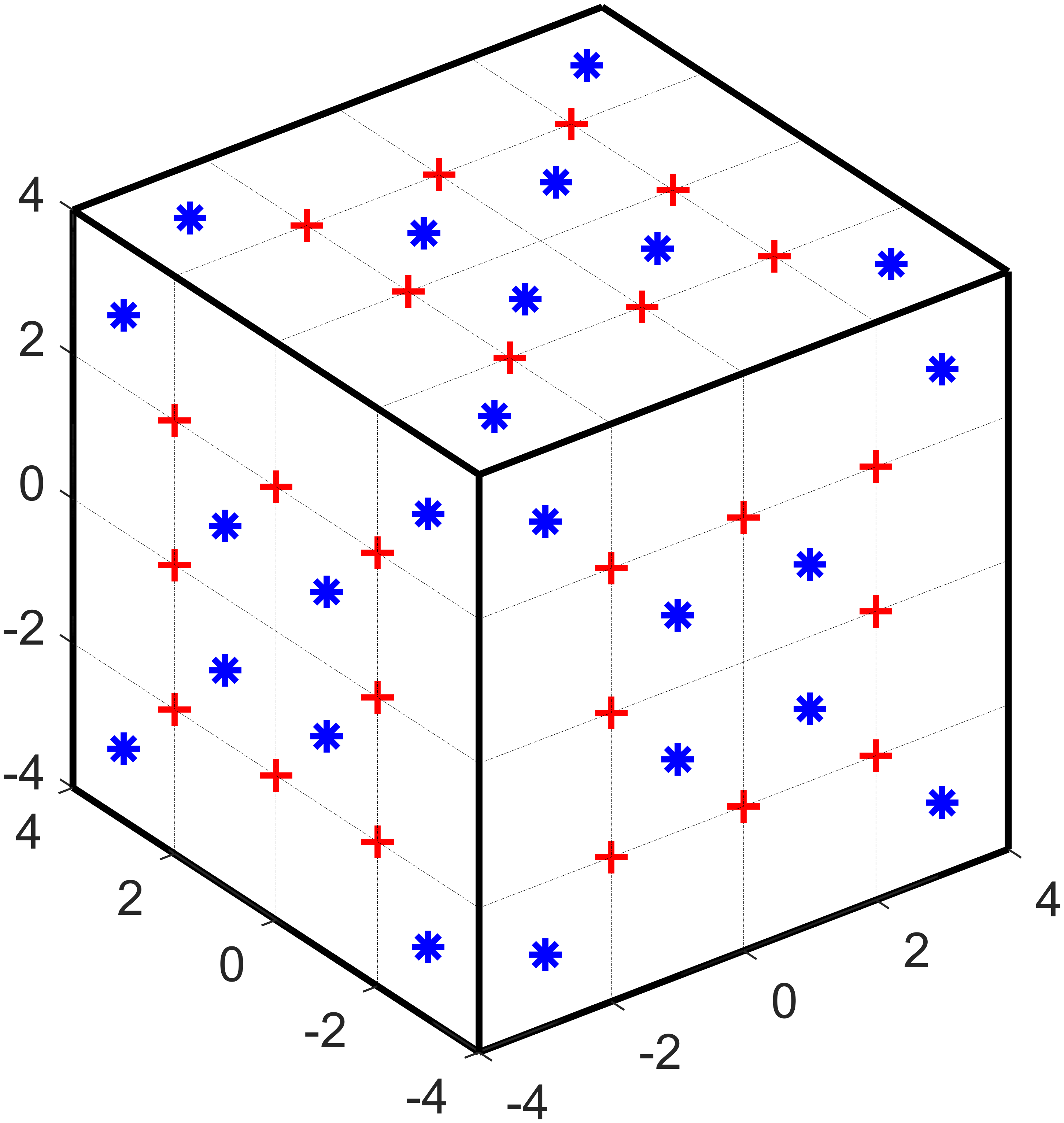}}
    \resizebox{0.43\textwidth}{!}{\includegraphics{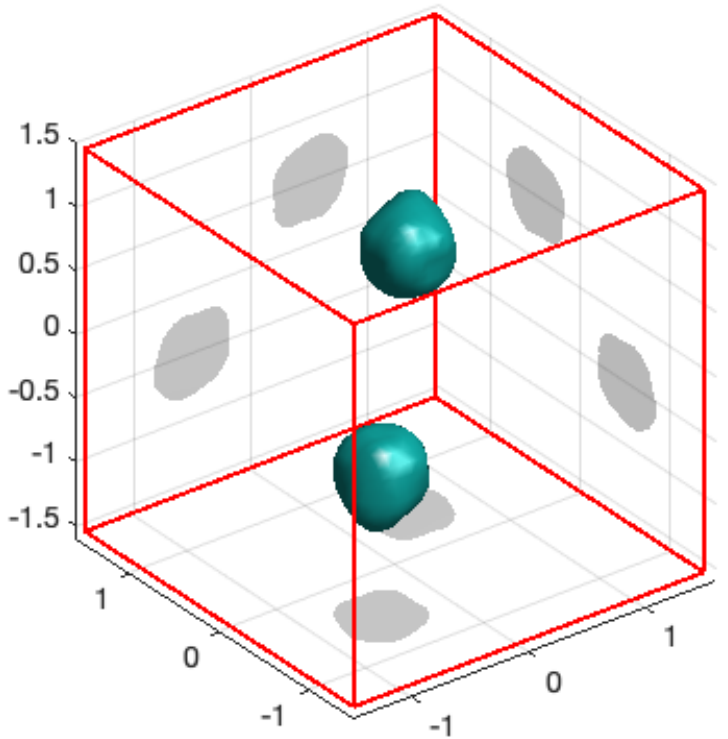}}
\caption{ The left  panel shows the arrangement of
eight transmitters (marked with red $+$) and receivers (marked with blue $\ast$) on each face.  The right  panel shows the 2 cube impedance obstacle using the eight sources and receivers on
each face. }
    \label{fig:2cubes_fullapp_other_cases_a}
\end{figure}

{ All TD-LSM calculations were performed on an Apple Mac Pro having a 2.7 GHz 24-Core Intel Xeon W processor and 384Gb of RAM using Matlab 2021b.  When using 1000 singular vectors for the PEC case with $f_0=2$, the elapsed time for computing the incomplete SVD was 3.8 hours, while the computation of the indicator function depends on the number of auxiliary source points used.  In this paper we use 41 in each direction, for each auxiliary source point computing the indicator function took 0.44 seconds per point or 8.5 hours in total. Similar timings hold for the other cases.}
\subsection{Numerical results for four cubes}
Next we turn to the more difficult reconstruction problem of four small cubes as shown in Fig.~\ref{fig:4cubes_fullapp} left panel.  The scatterers are smaller and two are very close together in comparison to the wavelength of the probing radiation. Nevertheless there is significant scattering: Figure \ref{fig:4cubes_snapshot} shows snapshots of the scattered electric field. The field is shown for time instant, 6.62 (left) and 7.44 (right). The source location is $(-4,\, -2,\, 0)$ and polarization $\bfp =(0,\, 0,\, 1)$ and for the peak frequency we set {$f_0=2$}.

\begin{figure}[!h]
\centering
\resizebox{0.45\textwidth}{!}{\includegraphics{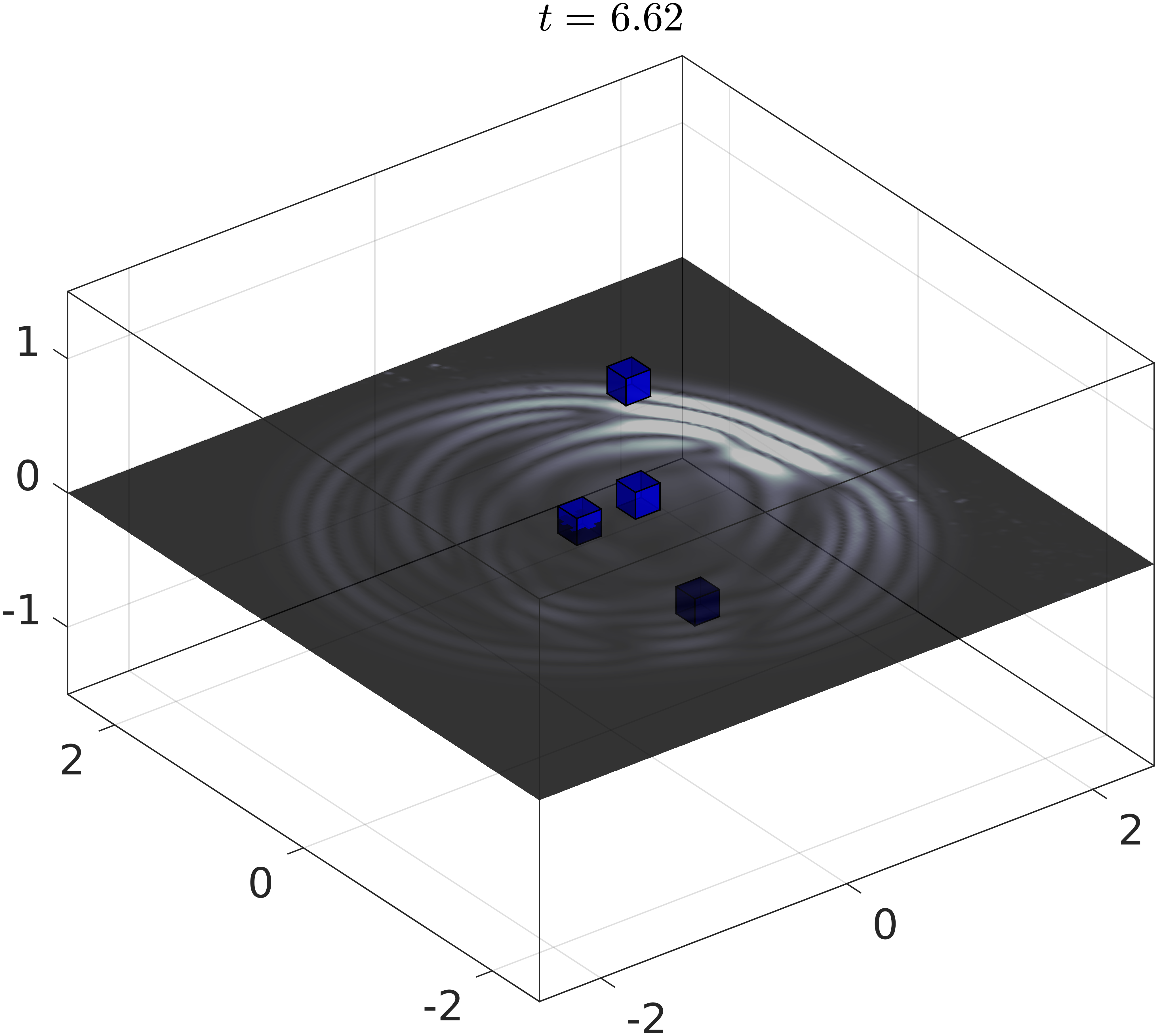}}
\resizebox{0.45\textwidth}{!}{\includegraphics{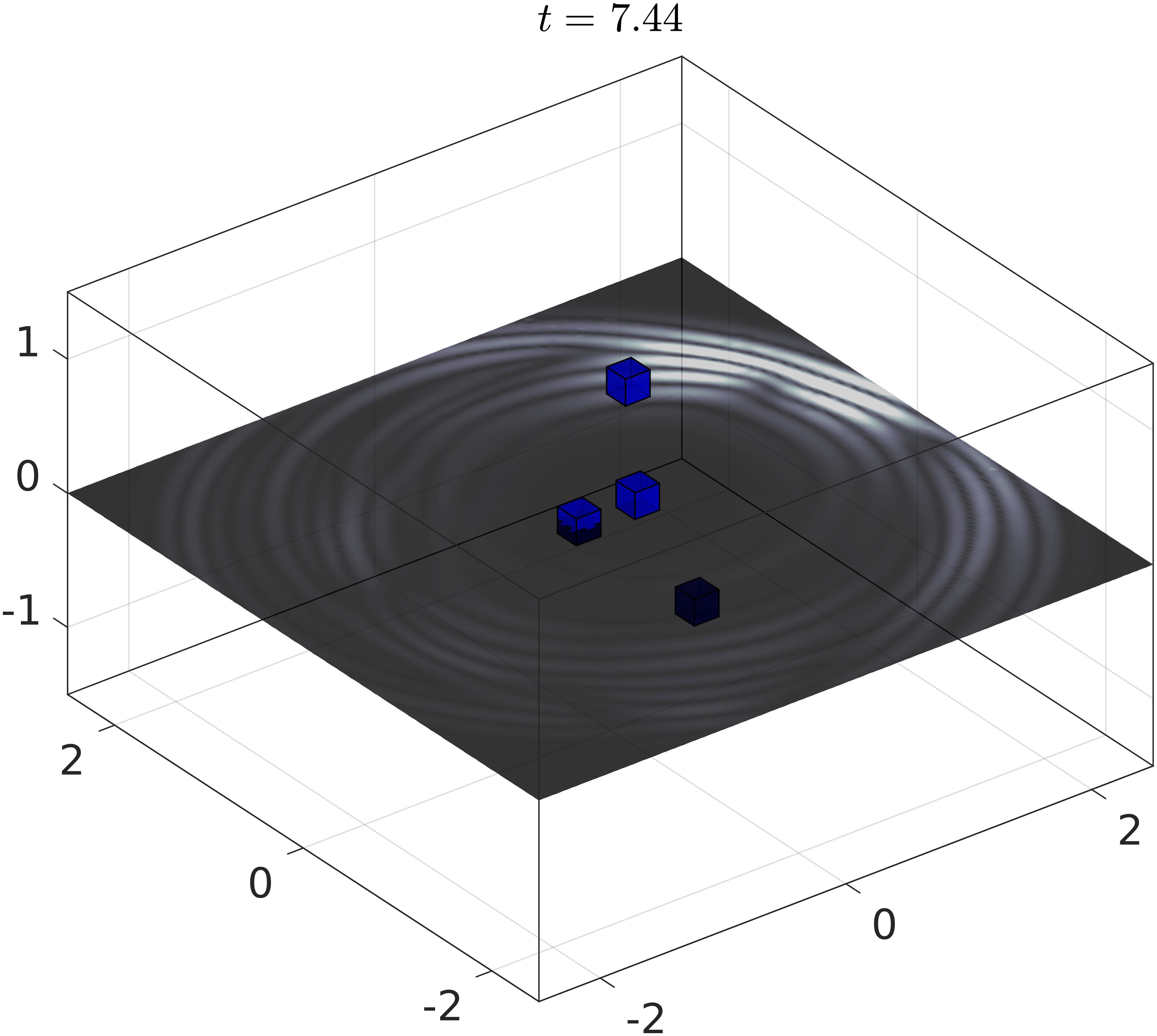}}
\caption{Snapshot of the scattered electric field $\|\calE\|$ for two time instants on a $x_3=0$ plane. The time instant is shown in the figure titles. For $x_1$ and $x_2$, the visualization is limited to $[-2.5,\, 2.5]$ and both graphs show also the geometries and locations of the four scatterers.}
\label{fig:4cubes_snapshot}
\end{figure}

We start by using the same parameters and peak frequency as was used for the previous two cube case in Fig.~\ref{fig:2cubes_fullapp} ($f_0=1$).  The resulting reconstruction is shown in the right panel of Fig.~\ref{fig:4cubes_fullapp}. With the  choice of $\alpha=0.15$ the reconstruction is good, although the choice $\alpha=0.1$ does not reveal the leftmost scatterer.   

\begin{figure}[!h]
    \centering
 \resizebox{0.45\textwidth}{!}{\includegraphics{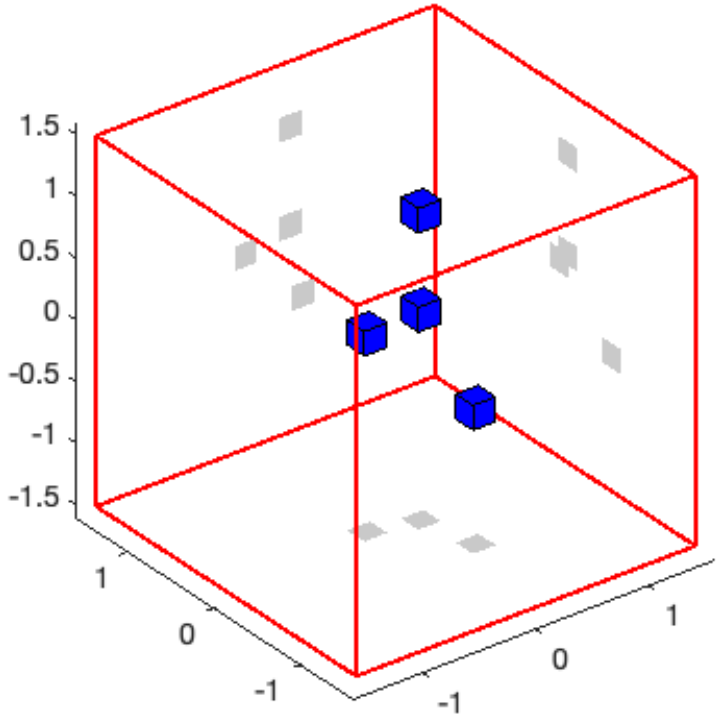}}
 \resizebox{0.45\textwidth}{!}{\includegraphics{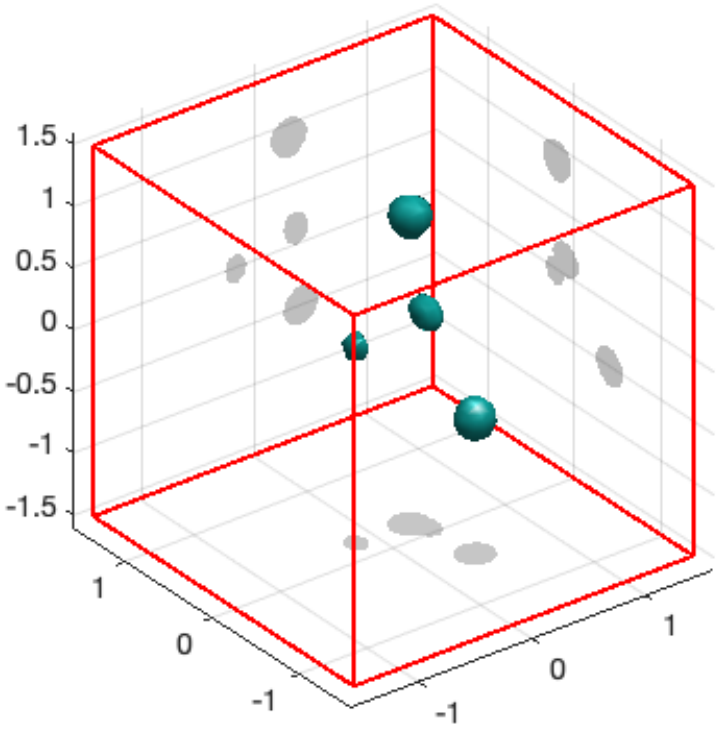}}
    \caption{Left panel shows the exact scatterer.  The right panel shows the reconstruction using the same parameters as for Fig.~\ref{fig:2cubes_fullapp} except $\alpha=0.15$. As before the red box indicates
    the extent of the sampling search region. }
    \label{fig:4cubes_fullapp}
\end{figure}

To test the hypothesis that the peak frequency $f_0$ is too low, we repeat the computation of the
forward data and inverse problem with $f_0=2$.  {Results are shown in Fig.~\ref{fig:4cubes_fullapp_f2}.  In the left panel we use the isosurface parameters\ $\alpha=0.1$.  In the right panel we show the result of {$\alpha=0.27$}. All four scatterers are visible, but now the upper scatterer is more difficult to detect.  In this case the wavelength for the peak frequency is 0.5, and at this frequency the sources are distance 4 wavelengths apart, which is a very coarse array.  Further investigations into the choice of sensor setup and the source frequency are needed.} 

\begin{figure}[!h]
    \centering
 \resizebox{0.45\textwidth}{!}{\includegraphics{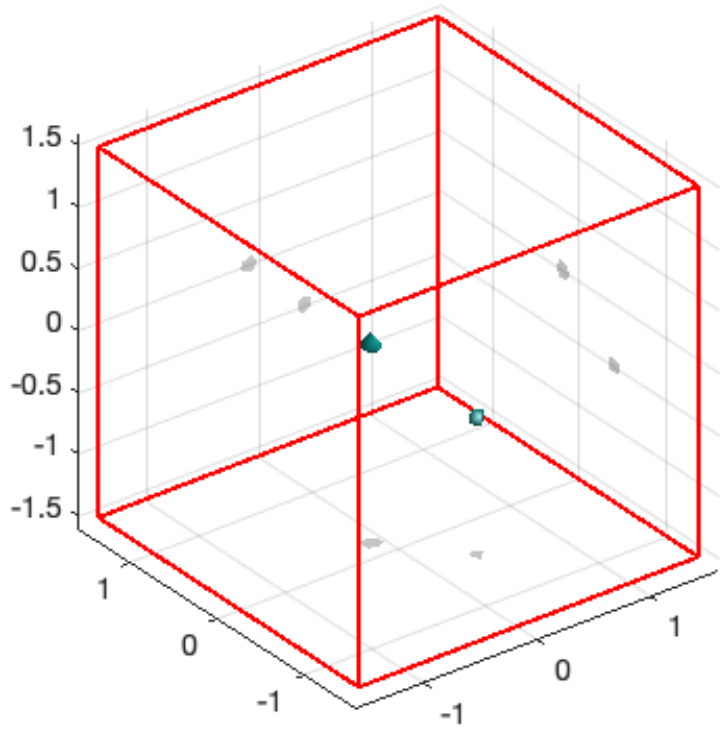}}
\resizebox{0.45\textwidth}{!}{\includegraphics{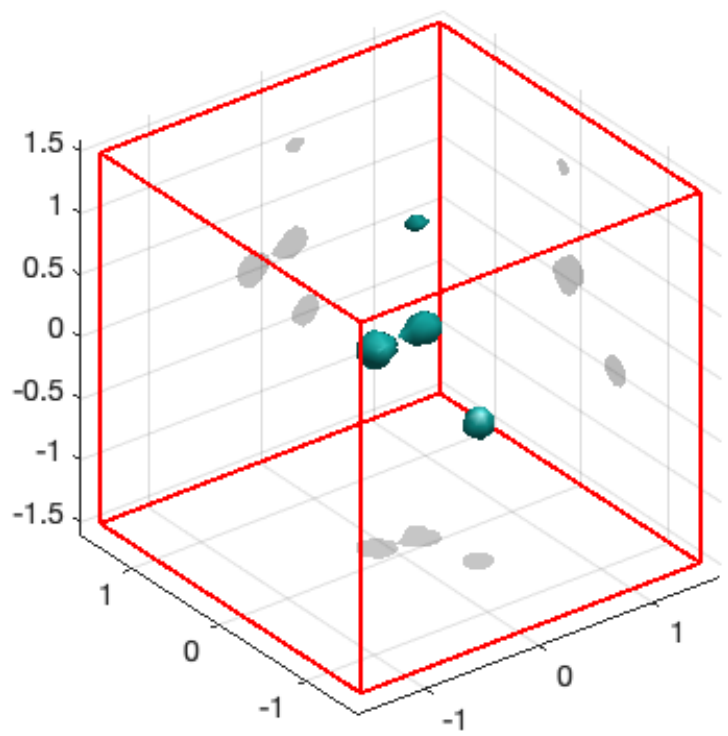}}
    \caption{Left panel: reconstruction of the four cubes with $f_0=2$ and $\alpha=0.1$. Right panel: reconstruction with $\alpha=0.27$.
    The results in the right panel should be compared to the right panel of Fig.~\ref{fig:4cubes_fullapp}.}
    \label{fig:4cubes_fullapp_f2}
\end{figure}

\section{Conclusion}
We have used the TD-LSM for the first time for Maxwell's equations.  By using the Fourier-Laplace transform approach we
have shown that the standard TD-LSM theory can be extended to this case.  We have also provided numerical results that
show the method is successful in reconstructing scatterers.  These numerical results are preliminary, and further work is needed to investigate
limited aperture problems.  

{The TD-LSM is more time consuming than the single frequency LSM  because the matrices involved are much larger so that the time needed for computing the singular vectors
is much longer.  In comparison to multifrequency approaches, the TD-LSM automatically uses data at all frequencies present in the signal, but is again more expensive. For example, in 
\cite{bellis} numerical examples are presented for up to 101 frequencies.  For each frequency, a frequency domain LSM is used to
generate an indicator, and these are combined in a special way.  Since only small frequency domain LSM problems are solved, this approach will still be faster unless a very large number of frequencies are used.  A direct comparison of multi-frequency and time domain LSM methods is a problem for future research}.

{This paper suggests several interesting directions to
improve the practical applicability of the method.  These include the use
of alternative SVD implementations, the use of more general source models, and
an investigation of sensor setup.  This investigation will be facilitated
by tuning the DG forward solver (particularly the SL) to improve the efficiency of generation of forward data.  In addition, we hope to test the inversion scheme on real data.}

An interesting theoretical question is whether the method can be proved for penetrable scatterers, but this
will require a new approach other than using the Fourier-Laplace transform.

\section*{Acknowledgements}

The research of T. L\"ahivaara is supported by the Academy of Finland (the Finnish Centre of Excellence of Inverse Modeling and Imaging) and project 321761. 
The research of P. Monk is partially supported by the US AFOSR under grant number FA9550-20-1-0024. 
The research of V. Selgas is partially supported by the project MTM2017-87162-P of MINECO.
The authors wish also to acknowledge CSC – IT Center for Science, Finland, for computational resources.

\appendix
\section*{Appendix}
\section{The Discontinuous Galerkin forward solver}\label{sec:dg}

Recalling our assumption that $c_0=1$, Maxwell's equations for a regularized {electric} dipole at $\bfx^{\i}$ with polarization $\bfp$ can be written in the conservation form as~\cite[Section 10.5]{hesthaven_warburton_book}
\begin{equation}
\label{eq:cons}
    Q\frac{\partial \q}{\partial t} + \Ddiv \mathcal{F} = -\mathcal{J}.
\end{equation}
Here (\ref{eq:cons}), $Q$ is a block matrix given by
\begin{displaymath}
Q = \left[\begin{array}{cc}
    \epsilon _r \, I_3 & 0  \\
     0 & \mu _r \, I_3
\end{array}\right],\quad 
\q = \left[\begin{array}{c}
     \calE  \\
     \calH 
\end{array}\right],
\end{displaymath}
and
\begin{equation*}
\mathcal{F} = \left[\begin{array}{c}
     \bfFe   \\
     \bfFh  
\end{array}\right] = \left[f_1,\, f_2,\, f_3\right],\quad f_{\ell} = \left[\begin{array}{c}
     -\mathbf{e}_{\ell} \times \calH   \\
     \mathbf{e}_{\ell} \times \calE  
\end{array}\right], 
\end{equation*}
where $\mathbf{e}_{\ell}$ is the ${\ell}$-th Cartesian unit vector. In addition, the right-hand side of (\ref{eq:cons}) is written
\begin{displaymath}
\mathcal{J} = \left[\begin{array}{c}
     \epsilon_r\beta\,\calE + \bfp \,\chi(t)\, \delta_{\bfx^{\i}}  \\
     \mu_r\beta\,\calH  
\end{array}\right].
\end{displaymath}
The coefficient $\beta=\beta(\bfx)$ is used to add additional damping in the zone next to the absorbing condition and $\delta_{\bfx^{\i}}$ is the Dirac delta function at $\bfx^{\i}$. We assume that the materials are isotropic and piecewise homogeneous, so the coefficients corresponding to the relative electric permittivity $\epsilon_r$ and magnetic permeability $\mu_r$ are identified piecewise with real scalars.

The spatial derivatives in (\ref{eq:cons}) are discretized using the nodal discontinuous Galerkin method \cite{hesthaven_warburton_book}, while the time integration is done by the low-storage explicit Runge-Kutta method \cite{carpenter94}. In the discretized version, we assume that the computational domain $\tilde{\Omega}\subset \mathbb{R}^3$ is divided into $N_K$ tetrahedral elements, $\Omega = \bigcup_{k=1}^{N_K} D^k$. The boundary of element $D^k$ is denoted by $\Gamma^k$.  We assume that the elements are aligned with material discontinuities. Furthermore, for any element $D^k$ the superscript `$-$' refers to interior information while `$+$' refers to exterior information.

We multiply (\ref{eq:cons}) by a local test function $\phi^k$ and integrate by parts twice to obtain an elementwise variational formulation
\begin{equation}
\label{strong_form}
  \int_{D^k}  \left(Q\frac{\partial \q^k}{\partial t} 
    + \Ddiv\mathcal{F}+\mathcal{J}\right) \phi^k\, d\x
  =  \int_{\Gamma^k} 
  \bfnu \cdot\left(\mathcal{F}^- - \mathcal{F}^*\right) \phi^k\, dA\, ,
\end{equation}
$\q^k$ is the restriction of $\q$ to the element $D^k$ and $\mathcal{F}^*$ is the numerical flux across neighbouring element interfaces.  For the numerical flux $\mathcal{F}^*$ along the normal $\bfnu$, we use the upwind \cite{HESTHAVEN2002186}
\begin{eqnarray}
\label{eq:fe}
 \bfnu \cdot \left(\bfFe^- - \bfFe^* \right) &=&  \frac{1}{Z^++Z^-}\bfnu \times\left(Z^+\ddcalH - \bfnu\times\ddcalE\right),\\
\label{eq:fh}
\bfnu \cdot \left(\bfFh^- - \bfFh^* \right) &=&  -\frac{1}{Y^++Y^-}\bfnu \times\left(Y^+\ddcalE + \bfnu\times\ddcalH\right),
\end{eqnarray}
where $Z^{\pm}=\frac{1}{Y^{\pm}}=\sqrt{\frac{\mu_r^{\pm}}{\epsilon_r^{\pm}}}$,  $\ddcalH = \dcalH$, and $\ddcalE = \dcalE$.

In this work, we apply impedance and perfect electric conductor (PEC) boundary conditions. On the exterior boundary, the PEC condition is recovered from (\ref{eq:fe}) and (\ref{eq:fh}) by setting $\epsilon_r^+ = \epsilon_r^-$, $\mu_r^+ = \mu_r^-$,
\begin{equation}
  \calE^+ =  -\calE^-,\quad \text{and}\quad \calH^+ = \calH^-.
\end{equation}
The impedance boundary condition is obtained from (\ref{eq:fe}) and (\ref{eq:fh}) by setting $\epsilon_r^+ = \epsilon_r^- = \epsilon_r^{{\rm bc}}$, $\mu_r^+ = \mu_r^- = \mu_r^{{\rm bc}}$, and
\begin{equation}
\label{eq:sma}
  \calE^+ = \calH^+ = \boldsymbol{0} \, .
\end{equation}
The impedance boundary condition reduces to the Silver-M\"uller absorbing (SMA) by setting parameters $\epsilon_r^{{\rm bc}} = \epsilon_r^-$ and $\mu_r^{{\rm bc}} = \mu_r^-$ (i.e. the physical values of the interior element). Unfortunately, the SMA condition is not perfect and some unwanted reflections will happen at the outflow boundaries if the incoming wave is not parallel with the boundary. In this paper, we couple the absorbing boundary condition with a sponge layer (SL) that damps the wave. To do so, the variable $\beta$ introduced in (\ref{eq:cons}) is non-zero for the regions next to the SMA condition. Moreover, the SL is coupled with the grid stretching.

To illustrate the functioning of the SL, let us consider an example in which the layer is applied on one coordinate axis only. Now, for a node coordinate $x_1^{(\ell)}\in[x_1^{(0)},\ x_1^{(0)}+L]$, where $x_1^{(0)}$ denotes a starting location of the SL and $L$ its thickness, the grid stretched coordinate $\hat{x}_1^{(\ell)}$ is defined as
\begin{eqnarray}
    \hat{x}_1^{(\ell)} = x_1^{(0)} + \big(x_1^{(\ell)} - x_1^{(0)} \big) \left(1+g_{\max}\Big(\frac{x_1^{(\ell)} - x_1^{(0)}}{L}\Big)^3\right),
\end{eqnarray}
where $g_{\max}$  denotes the maximum value given for the grid stretching. Similarly the $\beta$ value at $\hat{x}_1^{(\ell)}$ in the SL is
\begin{eqnarray}\label{beta_def}
  \beta(\hat{x}_1^{(\ell)}) = \beta_{\max}\, \Big(\frac{\hat{x}_1^{(\ell)} - x_1^{(0)}}{L(1+g_{\max})}\Big)^3\, ,
\end{eqnarray}
where $\beta_{\max}$ is the maximum value given for the damping coefficient.

The current version of the wave solver is written in the C/C++ programming language and is integrated with the Open Concurrent Compute Abstraction (OCCA) \cite{medina2014occa} library and message passing interface to enable parallel computations both on CPU and GPU clusters. Currently, the solver uses only constant order basis functions and the computational load between different elements is balanced by the parMetis software \cite{Karypis}.

{Due to the assumption of using a magnetic dipole as a source, and the fact that the current DG-based wave solver assumes an electric dipole, we use the magnetic field ${\cal H}$ as data for the inverse solver.  Because of the constant coefficients in Maxwell's equations, this corresponds to the electric field due to a magnetic dipole.}

\end{document}